\def\R{\Bbb R}    \def\Q{\Bbb Q} 
\def\I{\Bbb I} \def\N{\Bbb N}   \def\F{\Bbb F} 
  \def\Z{\Bbb Z} \def\A{\Bbb A} \def\S{\Bbb S}
\newtheorem{theo}{Theorem}[section] 
\newtheorem{lem}[theo]{Lemma} 
\newtheorem{pro}[theo]{Proposition} 
\newtheorem{cor}[theo]{Corollary}
\title{On Four-Dimensional Unital Division Algebras\\ over Fields of Characteristic not 2}
\author{Ernst Dieterich\\[1ex]{\it Dedicated to the memory of Peter Gabriel (1933--2015)}}
\begin{document}

\maketitle

\begin{abstract}
\noindent 
In \cite{BA15}, an exhaustive construction is achieved for the class of all 4-di\-mensional 
unital division algebras over finite fields of odd order, whose left nucleus 
is not minimal and whose automorphism group contains Klein's four-group.

We generalize the approach of \cite{BA15} towards all division algebras of the above specified type, but now admitting arbitrary fields $k$ of cha\-racteristic not 2 as ground fields. 
For these division algebras we present an exhaustive construction that depends on a quadratic field extension of $k$ and three parameters in $k$, and we derive an isomorphism criterion 
in terms of these parameters. As an application we classify, for $k_o$ an ordered field in which every positive element is a square, all division $k_o$-algebras of the mentioned type, 
and in the finite field case we refine the Main Theorem of \cite{BA15} to a classification even of the division algebras studied there. 

The category formed by the division $k$-algebras investigated here is a groupoid, whose structure we describe in a supplementary section in terms of a covering by group actions. In particular,
we exhibit the automorphism groups for all division algebras in this groupoid. 
\end{abstract}

\noindent
{\bf 2010 Mathematics Subject Classification} 12F10 $\cdot$ 15A21 $\cdot$ 17A35 $\cdot$ 17A36 $\cdot$ 17A60 $\cdot$ 20L05
\\[1ex]
{\bf Keywords} Unital division algebra $\cdot$ Right nucleus $\cdot$ Klein's four-group $\cdot$ Groupoid $\cdot$ Classification $\cdot$ Covering by group actions

\section{Introduction}

For any field $k$, we denote by $\mathscr{D}_4(k)$ the category of all 4-dimensional division algebras over $k$, not assumed to be associative, whose morphisms are 
the non-zero algebra morphisms. This category is in fact a groupoid, i.e.~all morphisms in $\mathscr{D}_4(k)$ are isomorphisms.

The present article is devoted to the investigation of the full subgroupoid $\mathscr{C}(k)$ of $\mathscr{D}_4(k)$, formed by all $A \in \mathscr{D}_4(k)$ which are unital, 
such that $k1$ is properly contained in the right nucleus $N_r(A)$, and whose automorphism group admits Klein's four-group as a subgroup. Our interest in the groupoid $\mathscr{C}(k)$ is twofold. 
Firstly, we would like to classify $\mathscr{C}(k)$ up to isomorphism, and secondly, we want to understand its categorical structure. 

To motivate our first interest we note that, although classification problems generally tend to provoke deepened insight, classifications of division algebras are in fact rare. Against this 
background, the recent paper \cite{BA15} stimulated our approach to a classification of $\mathscr{C}(k)$, which generalizes the approach of \cite{BA15} as follows. Assume that ${\rm char}(k) \not= 2$.
Setting out from any quadratic field extension $\ell$ of $k$ and any scalar triple $\underline{c} = (c_1,c_2,c_3) \in k^3$ that satis\-fies an implicitly defined $\ell$-admissibility condition, we construct
a division algebra $A(\ell,\underline{c}) \in \mathscr{C}(k)$. Up to isomorphism, every $A \in \mathscr{C}(k)$ can be constructed in this way. We say that two $\ell$-admissible triples $\underline{c}$ and
$\underline{d}$ are $\ell^\ast$-equivalent, if the algebras $A(\ell,\underline{c})$ and $A(\ell,\underline{d})$ are isomorphic. We find a criterion that expresses $\ell^\ast$-equivalence of $\underline{c}$
and $\underline{d}$ in terms of $\underline{c}, \underline{d}$ and $\ell^\ast$. At this stage, the classification problem of $\mathscr{C}(k)$ is reduced to the subproblems of displaying explicitly, for all
quadratic extensions $k \subset \ell$, the subsets of all $\ell$-admissible triples $C_\ell \subset k^3$, of finding a transversal $T_\ell \subset C_\ell$ for the $\ell^\ast$-equivalence classes of $C_\ell$, and 
of eliminating redundancy of type $A(\ell,\underline{c})\ \simeq\ A(\ell',\underline{d})$ with $\ell\ \not\simeq\ \ell'$. The complexity of these three subproblems depends 
heavily on the nature of the ground field $k$, and only the second subproblem appears easy in general. For two types of fields $k$, however, we do achieve complete solutions to all three subproblems
and thereby a classification of $\mathscr{C}(k)$, namely for ordered fields $k_o$ in which every positive element is a square, and for finite fields $\F_q$ of odd order $q$. In the arithmetic case $k = \Q$,
an attempt to solve the above three subproblems readily evolves into intriguing number theoretic questions. Implementing theorems of Fermat, Gauss and Legendre in pursuit of this approach, 
substantial progress towards a classification of $\mathscr{C}(\Q)$ has been achieved in \cite{Ha17}. 

The second interest stems from our observation that general groupoids often admit a non-trivial covering by groupoids that arise naturally from group actions. These so-called {\it group action groupoids}
are very concrete mathematical objects, even when the groupoid they cover originally was given in highly abstract terms. Due to this feature, we hold that the initiation of a study of {\it coverings 
of groupoids by group actions} is well motivated. In this respect, we treat the groupoid $\mathscr{C} = \mathscr{C}(k)$ as a test case. In Section 6, a covering of $\mathscr{C}$ by group actions is 
indeed established, provided that ${\rm char}(k) \not= 2$. Its intrinsic nature turns out to vary interestingly with the various types of algebra structures which $\mathscr{C}$ comprises, namely 
non-associative division algebras, non-commutative skew fields, and fields. A language, appropriate to formulate this covering result concisely, is introduced in Subsection 6.1. 

On a technical level, the covering results of Section 6 (Corollaries \ref{Kleinian covering by group actions} and \ref{refined Kleinian covering by group actions}) emerge from the results of Sections 2-4
as follows. We exhibit a partition \mbox{$C_\ell = C_\ell^0 \sqcup C_\ell^1$} such that each $C_\ell^\nu$ 
is a union of $\ell^\ast$-equivalence classes in $C_\ell$, which for their part coincide with the orbits of a suitable group action on $C_\ell^\nu$, which in turn endows $C_\ell^\nu$ with the 
structure of a group action groupoid.\vspace{0,1cm} Now, the constructions $\mathscr{F}_\ell^\nu: C_\ell^\nu \to \mathscr{C},\ \mathscr{F}_\ell^\nu(\underline{c}) = A(\ell,\underline{c}),$ give rise 
to\vspace{0,1cm} faithful and dense functors $\mathscr{F}_\ell^\nu: C_\ell^\nu \to \mathscr{C}_\ell^\nu,\ \nu \in \{0,1\},$ where $\mathscr{C}_\ell = \mathscr{C}_\ell^0 \amalg \mathscr{C}_\ell^1$\vspace{0,1cm} is 
the full subgroupoid of $\mathscr{C}$ formed by all $A \in \mathscr{C}$ admitting a filtration of subalgebras $k \subset \ell \subset N_r(A) \subset A$, such that $\ell$ is invariant under two 
distinct commuting order 2 automorphisms of $A$, with blocks $\mathscr{C}_\ell^0 = \{ A \in \mathscr{C}_\ell\ |\ |{\rm Aut}(A)| > 4\}$ and $\mathscr{C}_\ell^1 = \{ A \in \mathscr{C}_\ell\ |\ 
|{\rm Aut}(A)| = 4\}$. As $\ell$ ranges through all isoclasses of quadratic extensions of $k$ and $\nu$ ranges through $\{0,1\}$, the subgroupoids $\mathscr{C}_\ell^\nu$ cover $\mathscr{C}$. If we ignore the 
non-commutative skew fields in $\mathscr{C}$, then the faithful and dense functors $\mathscr{F}_\ell^\nu: C_\ell^\nu \to \mathscr{C}_\ell^\nu$ are even full, i.e.~equivalences of categories.

\section{Preliminaries}

\subsection{Conventions, notation, terminology and basic notions}

In this article, the least natural number is $0$. For all $n \in \N$, we set
\[ \underline{n} = \{ i \in \N\ |\ 1 \le i \le n \}. \] 
If $F$ is a field and $(m,n) \in \N^2$, then $F^{m \times n}$ denotes the set of all \mbox{$m \times n$}-matrices with entries in $F$, and 
$F^\ast = (F \setminus \{0\}, \cdot)$ is the multiplicative group \mbox{of $F$.} The {\it square map} $s_F: F \to F,\ s_F(x) = x^2$, 
induces a group endomorphism $s_F^\ast: F^\ast \to F^\ast$. The notation $X_{sq} = s_F(X) = \{ x^2\ |\ x \in X \}$ will be used for all
subsets $X \subset F$.

The cardinality of a set $Y$ is denoted by $|Y|$. Maps are written on the left of their arguments, and are 
composed on the left. The composite map $gf$ will also be denoted by $g \circ f$, if it is beneficial to clarity.

If $(Y_i)_{i \in I}$ is a family of subclasses of a class $Z$, then $Y = \bigsqcup_{i \in I} Y_i$ expresses that $Y = \bigcup_{i \in I} Y_i$ and $Y_i \cap Y_j = \emptyset$ for all $i \not= j$.
The object class of a category $\mathscr{A}$ is also denoted by 
$\mathscr{A}$, for simplicity. A family $(\mathscr{A}_i)_{i \in I}$ of 
full subcategories of a category $\mathscr{A}$ will be called a {\it covering} of 
$\mathscr{A}$ if the identity of object classes $\mathscr{A} = \bigcup_{i \in I} 
\mathscr{A}_i$ holds true, and will be called a {\it decomposition} of $\mathscr{A}$ with 
{\it blocks} $\mathscr{A}_i$ if the identity of object classes $\mathscr{A} = \bigsqcup_{i 
\in I} \mathscr{A}_i$ holds true and ${\rm Mor}_\mathscr{A}(A_i, A_j) = \emptyset$ for all 
$(A_i, A_j) \in \mathscr{A}_i \times  \mathscr{A}_j$ and all $i \not= j$. The 
decomposition of $\mathscr{A}$ into blocks $\mathscr{A}_i$ is also expressed 
by saying that $\mathscr{A}$ is the {\it coproduct} of $(\mathscr{A}_i)_{i \in I}$ with 
{\it cofactors} $\mathscr{A}_i$, or by the formula $\mathscr{A} = \coprod_{i \in I} 
\mathscr{A}_i$. 

Following \cite{GR92}, we say that a category $\mathscr{A}$ is {\it svelte}, in case its isoclasses form a set, denoted by 
$\mathscr{A}/\hspace{-0,1cm}\simeq$. In this 
article, the term {\it classification} always means classification up to isomorphism. More precisely, by a classification 
of a svelte category $\mathscr{A}$ we mean an explicitly displayed transversal $\mathscr{T}$ for 
$\mathscr{A}/\hspace{-0,1cm}\simeq$. Synonymously, we say that $\mathscr{T}$ classifies $\mathscr{A}$. 

For any group $G$ with identity element $e$, we call $G^\circ = G \setminus \{e\}$ a {\it punctured group}. In writing $H < G$, we mean that $H$ is a subgroup of $G$. If $k$ is a field and $h < k^\ast$, 
then $\left( k^\ast/h \right)^\circ = (k^\ast/h) \setminus \{h\}$ is the {\it punctured quotient group} of $k^\ast$ by $h$. By {\it Klein's four-group} we mean any non-cyclic group of \mbox{order 4.} The 
notation ${\rm G} = {\rm C}_2 \times {\rm C}_2$ with ${\rm C}_2 = \{\pm 1\}$ for a specific multiplicative instance of Klein's four-group, and ${\rm V} = \Z_2 \times \Z_2$ with 
$\Z_2 = \{\overline{0},\overline{1}\}$ for  a specific additive instance of Klein's four-group, will be kept throughout this paper.

The symbol $k$ always denotes a field. By a $k$-{\it algebra} $A$ we mean a 
vector space $A$ over $k$, equipped with an algebra structure, i.e.~a $k$-bilinear 
map $A \times A \to A,\ (x,y) \mapsto xy$. The product $xy$ will occasionally be 
denoted by $x \cdot y$, to enhance comprehensibility. With every 
\mbox{$k$-algebra} $A$ and every element $a \in A$, we associate the $k$-linear operators 
\mbox{$L_a: A \to A,$}\ $L_a(x) = ax$ and $R_a: A \to A,\ R_a(x) = xa$. A {\it division 
algebra} over $k$ is a non-zero $k$-algebra $A$ such that both $L_a$ and $R_a$ are 
bijective for all $a \in A \setminus \{0\}$. A $k$-algebra $A$ is called {\it 
unital} if it has an {\it unity} $1 = 1_A$, such that $1x = x = x1$ for all $x 
\in A$. For unital $k$-algebras $A$, we usually treat the canonical field isomorphism 
$k\ \tilde{\to}\ k1$ as identity $k = k1$, to ease notation.

A {\it Hurwitz algebra} over $k$ is a non-zero unital $k$-algebra $A$, admitting a 
non-degenerate multiplicative quadratic form $q: A \to k$, traditionally called the 
{\it norm} of $A$. 

A {\it skew field} over $k$ is a unital associative division algebra over $k$. With each skew field $A$ over $k$ we associate its {\it multiplicative group} $A^\ast = (A \setminus \{0\},\cdot)$. 
A {\it skew field} over $k$ is called {\it central} if its centre equals $k$. The commutative skew fields $\ell$ over $k$ are just the field extensions $k \subset \ell$. Their dimension over $k$ is 
traditionally called {\it degree} of the field extension, and is denoted by $\dim_k(\ell)$ or $[\ell:k]$. Field extensions of degree 2 are briefly referred to as {\it quadratic extensions}. 

A {\it morphism} from a $k$-algebra $A$ to a $k$-algebra $B$ is a $k$-linear map 
$\varphi: A \to B$ such that $\varphi(xy) = \varphi(x)\varphi(y)$ for all $x,y \in A$. An 
isomorphism of $k$-algebras is a bijective morphism. Thus, the automorphism group 
of a $k$-algebra $A$ is 
\[ {\rm Aut}(A) = \{ \varphi \in {\rm GL}_k(A)\ |\ \varphi(xy) = \varphi(x)\varphi(y)\ 
\forall x,y \in A \}. \] 
Its identity element will be denoted by $\I = \I_A$. If $A$ is unital, then every automorphism $\varphi$ of $A$ fixes 
$k = k1$ elementwise. In particular, if a $k$-algebra $\ell$ is a field extension $k \subset \ell$, then ${\rm Aut}(\ell)$ coincides 
with the group of all $k$-automorphisms of the field $\ell$, also known as the Galois group of $\ell$ over $k$, and denoted 
${\rm Gal}(\ell/k)$. Furthermore, if $A$ is a skew field over $k$, then 
every $a \in A^\ast$ determines an inner automorphism $\kappa_a \in {\rm Aut}(A)$, 
defined by $\kappa_a(x) = axa^{-1}$. A famous Theorem of Skolem and Noether asserts that, if $A$ 
is a central skew field over $k$, then ${\rm Aut}(A) = \{ \kappa_a\ |\ a \in A^\ast \}$. In 
that case, the surjective group morphism $\kappa: A^\ast \to {\rm Aut}(A),\ \kappa(a) = 
\kappa_a$ induces the isomorphism $\overline{\kappa}: A^\ast/k^\ast\ \tilde{\to}\ 
{\rm Aut}(A),\ \overline{\kappa}(\overline{a}) = \kappa_a$. 

A morphism from a {\it division} $k$-algebra $A$ to a {\it division} $k$-algebra $B$ is understood to be a {\it non-zero} algebra morphism $\varphi: A \to B$. With this convention, every
morphism of division algebras is injective, and every morphism of division algebras of equal finite dimension is an isomorphism \cite[Proposition 2.3]{Di12}.

By a {\it groupoid} we mean a category, in which all morphisms are isomorphisms. Thus, for every field $k$ and all $n \in \N$, the category $\mathscr{D}_n(k)$ of all $n$-dimensional 
division algebras over $k$ is a groupoid. (It may or may not be empty.) Consequently, every full subcategory of $\mathscr{D}_n(k)$ is a groupoid. In particular, the full subcategory 
$\mathscr{C} = \mathscr{C}(k)$ of $\mathscr{D}_4(k)$, introduced in \mbox{Section 1,} is a groupoid. It contains the full subgroupoids $\mathscr{N} = \mathscr{N}(k),\ \mathscr{S} = \mathscr{S}(k)$ 
and $\mathscr{K} = \mathscr{K}(k)$ formed by
\[ \begin{array}{ccl} 
\mathscr{N} & = & \{ A \in \mathscr{C}\ |\ A\ \mbox{is\ not associative}\},
\\[1ex]
\mathscr{S} & = & \{ A \in \mathscr{C}\ |\ A\ \mbox{is\ a\ central\ skew\ field\ over}\ k\}\ \mbox{and}
\\[1ex]
\mathscr{K} & = & \{ A \in \mathscr{C}\ |\ A\ \mbox{is\ a\ field\ extension\ of}\ k\}.
\end{array} \] 
In the same vein, $\mathscr{D}_2(k)$ contains the full subgroupoid $\mathscr{Q} = \mathscr{Q}(k)$ formed by all quadratic extensions of $k$.

The groupoids $\mathscr{N}, \mathscr{S}, \mathscr{K}$ and $\mathscr{Q}$ will play a prominent role in the investigation of $\mathscr{C}$, throughout this article.

\subsection{Generalities on quadratic extensions}

Let $k \subset \ell$ be a quadratic extension, and ${\rm char}(k) \not= 2$. Then $k \subset \ell$ is {\it Galois}, with Galois group ${\rm Gal}(\ell/k) = \langle \sigma \rangle$ of order 2. 
We write $\overline{x} = \sigma(x)$ for all $x \in \ell$. The set of all {\it purely imaginary elements} in $\ell$,
\[ {\rm Im}(\ell) = \{ x \in \ell\ |\ \overline{x} = -x \} = \{ x \in \ell\ |\ x^2 \in k \} \setminus k^\ast, \]
is a 1-dimensional $k$-linear subspace in $\ell$, such that $\ell = k \oplus {\rm Im}(\ell)$. 

The {\it norm map} $n_{\ell/k}: \ell \to k,\ n_{\ell/k}(x) = x\overline{x}$, induces a group morphism 
$n_{\ell/k}^\ast: \ell^\ast \to k^\ast,\ n_{\ell/k}^\ast(x) = x\overline{x}$. Besides, we have the group endomorphism 
$s_\ell^\ast: \ell^\ast \to \ell^\ast,\ s_\ell^\ast(x) = x^2$, introduced in Subsection 2.1.

For any subgroup $H < \ell^\ast$ with $\sigma(H) = H$, the symbol 
$H >\hspace{-0,2cm}\lhd\ {\rm Gal}(\ell/k)$ denotes the {\it semi-direct product} of $H$ 
and ${\rm Gal}(\ell/k)$, i.e.~the group with underlying set $H \times {\rm Gal}(\ell/k)$ 
and binary operation $(a, \sigma^i)(b, \sigma^j) = (a\sigma^i(b), \sigma^{i+j})$. The 
semi-direct products \mbox{$H >\hspace{-0,2cm}\lhd\ {\rm Gal}(\ell/k)$} that will arise naturally in our context involve three subgroups $H_i < \ell^\ast$, 
namely $(H_1, H_2, H_3) = (\ell^\ast, \S, \A^\ast)$, where the ``unit sphere'' $\S$ and the ``punctured axes'' $\A^\ast$ are defined by
\[ \begin{array}{lclcl}
\S\hspace{0,2cm} =\ \S(\ell/k) &=& \{ x \in \ell^\ast\ |\ x\overline{x} = 1 \} &=& {\rm ker}(n_{\ell/k}^\ast),\hspace{0,1cm}\ \mbox{and}
\\[0,2cm] 
\A^\ast = \A^\ast(\ell/k) &=& \{x \in \ell^\ast\ |\ x^2 \in k^\ast \} &=& (s_\ell^\ast)^{-1}(k^\ast) = k^\ast \sqcup ({\rm Im}(\ell) \setminus \{0\}).  
\end{array} \]

\subsection{Decomposition of $\mathscr{C}(k)$}

Let $k$ be any field. We show that every division algebra $A \in \mathscr{C}(k)$ either is non-associative, or a central skew field over $k$,
or a field extension of $k$, and we supplement this decomposition statement on $\mathscr{C}(k)$ by equivalent characterizations of each block. 
To begin with, we recall some basic notions needed in this context, and notation that goes along with them. 

Let $A$ be any $k$-algebra. A {\it subalgebra} of $A$ is a $k$-linear subspace $U 
\subset A$, such that $xy \in U$ for all $x,y \in U$. Every subalgebra $U$ of $A$ is itself a $k$-algebra. 
A {\it subfield} of $A$ is a subalgebra $U \subset A$, such that $U$ is a field. The {\it right nucleus} of $A$ is,
by definition, the subset 
\[ N = N_r(A) = \{ z \in A\ |\ (xy)z = x(yz)\ \forall x,y \in A \} \]
of $A$. Every division algebra $A$ over $k$ has {\it no zero divisors}, i.e. $xy = 0$ holds in $A$ 
only if $x = 0$ or $y = 0$. If, conversely, $A$ has no zero divisors and is finite 
dimensional, then $A$ is a division algebra over $k$. It follows, that every 
subalgebra of any finite-dimensional division algebra over $k$ again is a 
finite-dimensional division algebra over $k$.

The full subgroupoids $\mathscr{C}(k), \mathscr{N}(k), \mathscr{S}(k), \mathscr{K}(k)$ of $\mathscr{D}_4(k)$ are defined in Section 1 and Subsection 2.1. Besides, we introduce the full subgroupoid
$\mathscr{HD}_4(k)$ of $\mathscr{D}_4(k)$, formed by all 4-dimensional Hurwitz division algebras over $k$.   

\begin{lem} \label{nucleus}
Let $A$ be an algebra over a field $k$.
\\[1ex]
(i) The right nucleus $N$ of $A$ is an associative subalgebra of $A$.
\\[1ex]
(ii) If $A$ is unital, then $k \subset N \subset A$ is a filtration of subalgebras, and $N$ is a unital associative subalgebra of $A$. 
Moreover, $A$ is a right $N$-module, whose module structure $A \times N \to A$ is the restricted algebra structure of $A$.
\\[1ex]
(iii) If $A$ is a finite-dimensional unital division algebra over $k$, then $N$ is a skew 
field over $k$, and $A$ is a right vector space over $N$, such that
\[ \dim_k(A) = \dim_N(A) \dim_k(N). \]
If in addition $\dim_k(N) = 2$, then $N$ is a field.
\end{lem}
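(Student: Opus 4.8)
The plan is to handle the three parts in order, building each on the previous one.

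For (i), I would first note that for fixed $x,y$ the defining equation $(xy)z = x(yz)$ is $k$-linear in $z$, so $N$ is an intersection of kernels of $k$-linear maps, hence a $k$-linear subspace of $A$. The substantive point is closure under multiplication: given $z,w \in N$ I must verify $(xy)(zw) = x(y(zw))$ for all $x,y \in A$. I would obtain this by a telescoping chain of four applications of the nucleus condition, $(xy)(zw) = ((xy)z)w$ (using $w \in N$ with the pair $(xy,z)$), then $= (x(yz))w$ (using $z \in N$), then $= x((yz)w)$ (using $w \in N$ with the pair $(x,yz)$), and finally $= x(y(zw))$ (using $w \in N$ with the pair $(y,z)$). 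Associativity of $N$ is then immediate, since for $a,b,c \in N$ the identity $(ab)c = a(bc)$ already holds for \emph{all} $a,b \in A$ as soon as $c \in N$.

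For (ii), since $A$ is unital the computation $(xy)1 = xy = x(y1)$ shows $1 \in N$, and as $N$ is a subspace this yields $k = k1 \subseteq N$, giving the filtration $k \subseteq N \subseteq A$; unitality and associativity of $N$ are inherited from (i). To see that $A$ is a right $N$-module under the restricted product, I would verify the module axioms: $k$-bilinearity supplies both distributive laws together with scalar compatibility, $x \cdot 1 = x$ is unitality, and the key associativity axiom $x \cdot (zw) = (x \cdot z) \cdot w$ for $x \in A$ and $z,w \in N$ is precisely the defining property of $w \in N$.

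For (iii), I would combine (i) and (ii) with the facts recorded just before the lemma. As a subalgebra of the finite-dimensional division algebra $A$, the nucleus $N$ is again a finite-dimensional division algebra over $k$; being additionally unital and associative by (ii), it is by definition a skew field over $k$. Since any right module over a skew field is a right vector space, the module of (ii) makes $A$ a right $N$-vector space, of finite $N$-dimension because $\dim_k(A) < \infty$ and $k \subseteq N$. The dimension formula is the standard tower argument: choosing a right $N$-basis $e_1,\dots,e_m$ of $A$ and a $k$-basis $f_1,\dots,f_n$ of $N$, the products $e_i f_j$ form a $k$-basis of $A$. Spanning follows by expanding each $N$-coordinate over $k$ and pulling scalars out via $e_i(\lambda f_j) = \lambda(e_i f_j)$ for $\lambda \in k$; independence follows by first invoking right $N$-independence of the $e_i$ and then $k$-independence of the $f_j$. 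Hence $\dim_k(A) = mn = \dim_N(A)\dim_k(N)$. If moreover $\dim_k(N) = 2$, I would pick any $w \in N \setminus k$; then $1,w$ are $k$-linearly independent, so $\{1,w\}$ is a $k$-basis and $N = k + kw = k[w]$ equals the commutative subalgebra generated by $w$, whence $N$ is a commutative skew field, i.e.\ a field.

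I expect the only genuinely delicate step to be the closure of $N$ under multiplication in (i), as it hinges on the precise bookkeeping of the four-term chain above and on applying the nucleus condition to the right pairs; every other step is either immediate from bilinearity and unitality or a routine module-theoretic and tower computation.
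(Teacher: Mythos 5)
Your proof is correct and takes essentially the same route as the paper, which dismisses (i) and (ii) as ``straightforward verifications'' (your four-step telescoping chain $(xy)(zw) = ((xy)z)w = (x(yz))w = x((yz)w) = x(y(zw))$ and the module-axiom check are exactly the details being elided) and proves (iii) just as you do: $N$ is a skew field by the subalgebra-of-a-division-algebra fact, the products $e_i f_j$ of an $N$-basis with a $k$-basis give the dimension formula, and a $k$-basis $(1,w)$ forces $N$ to be a commutative skew field, i.e.\ a field.
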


\begin{proof}
$(i)$ and $(ii)$ admit straightforward verifications.
\\[1ex]
$(iii)$ By (ii), $N$ is a unital associative subalgebra of the finite-dimensional division algebra $A$ over $k$. Hence $N$ is a unital 
associative division algebra over $k$, i.e.~a skew field over $k$. Consequently, again by (ii), $A$ is a right vector space over $N$.
If $(x_i)_{i \in I}$ is an $N$-basis in $A$ and $(y_j)_{j \in J}$ is a 
$k$-basis in $N$, then $(x_i y_j)_{ij \in I \times J}$ is a $k$-basis in $A$. This proves the 
dimension formula. If $\dim_k(N) = 2$, then $N$ admits a $k$-basis $(1,b)$, whence we conclude that $N$ is a commutative skew field, i.e.~a field. 
\end{proof}

\begin{pro} \label{decomposition}
For every field $k$, the decomposition  
\[ \mathscr{C}(k) = \mathscr{N}(k) \amalg \mathscr{S}(k) \amalg \mathscr{K}(k) \]
holds true. Moreover, $\mathscr{N}(k) = \{ A \in \mathscr{C}(k)\ |\ \dim_k(N) = 2 \}$, and $\mathscr{K}(k)$ is formed by all Galois 
extensions of $k$ whose Galois group is Klein's four-group. If ${\rm char}(k) \not= 2$, then $\mathscr{S}(k) \supset \mathscr{HD}_4(k)$.
\end{pro}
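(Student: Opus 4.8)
The plan is to fix an arbitrary $A \in \mathscr{C}(k)$ and first determine the possible values of $\dim_k(N)$, where $N = N_r(A)$. Since $A$ is a $4$-dimensional unital division algebra with $k1 \subsetneq N$, Lemma \ref{nucleus}(iii) applies: $N$ is a skew field over $k$, $A$ is a right $N$-vector space, and $\dim_k(A) = \dim_N(A)\dim_k(N) = 4$. Hence $\dim_k(N)$ divides $4$, and the proper containment $k1 \subsetneq N$ forces $\dim_k(N) \geq 2$, so $\dim_k(N) \in \{2,4\}$. I would then note that $A$ is associative if and only if $N = A$, i.e.\ if and only if $\dim_k(N) = 4$: one direction is the very definition of the right nucleus, the other is immediate. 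This already yields $\mathscr{N}(k) = \{A \in \mathscr{C}(k)\ |\ \dim_k(N) = 2\}$, and it reduces the remaining analysis to the associative members of $\mathscr{C}(k)$, which are precisely the $4$-dimensional skew fields over $k$ lying in $\mathscr{C}(k)$.

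For such an associative $A$ I would split according to commutativity. If $A$ is commutative it is a field extension of $k$ of degree $4$, hence lies in $\mathscr{K}(k)$. If $A$ is non-commutative, I claim its centre $Z = Z(A)$ equals $k$, so that $A \in \mathscr{S}(k)$. Here I would use that $Z$ is a field with $k \subset Z \subset A$ and that $A$ is a $Z$-vector space, so $\dim_Z(A)$ divides $4$; the cases $\dim_Z(A) \in \{1,2\}$ are excluded because in each of them $A$ is generated as a $Z$-algebra by a single element over the central field $Z$ (trivially when $\dim_Z(A)=1$, and by choosing a $Z$-basis $(1,b)$ when $\dim_Z(A)=2$), forcing $A$ to be commutative against our assumption. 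Thus $\dim_Z(A) = 4$ and $Z = k$. Combined with the non-associative case this establishes the partition of object classes $\mathscr{C}(k) = \mathscr{N}(k) \cup \mathscr{S}(k) \cup \mathscr{K}(k)$, whose three defining properties are mutually exclusive; and since $\mathscr{C}(k)$ is a groupoid in which associativity and commutativity are isomorphism invariants, no morphisms run between distinct blocks. Hence the coproduct decomposition $\mathscr{C}(k) = \mathscr{N}(k) \amalg \mathscr{S}(k) \amalg \mathscr{K}(k)$ holds.

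To identify $\mathscr{K}(k)$ precisely, I would observe that a member of $\mathscr{K}(k)$ is a degree-$4$ field extension $k \subset A$ whose automorphism group ${\rm Gal}(A/k)$ contains a copy of Klein's four-group; the nucleus condition is automatic, since $N_r(A) = A \supsetneq k1$ for any field of dimension $>1$. Because $|{\rm Gal}(A/k)| \leq [A:k] = 4$, containing a subgroup of order $4$ forces $|{\rm Gal}(A/k)| = 4 = [A:k]$, so $A/k$ is Galois; and a group of order $4$ containing Klein's four-group must coincide with it, the cyclic group of order $4$ having no such subgroup. Conversely, every Galois extension of degree $4$ with Galois group Klein's four-group is a $4$-dimensional field whose automorphism group is exactly that group, hence lies in $\mathscr{K}(k)$.

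Finally, for $\mathscr{S}(k) \supset \mathscr{HD}_4(k)$ when ${\rm char}(k) \neq 2$, I would take a $4$-dimensional Hurwitz division algebra $A$ with norm $q$ and associated bilinear form $b$, and invoke the classical structure theory of Hurwitz algebras: such $A$ is a quaternion algebra, hence associative and central over $k$, and non-commutative, so a central skew field. It then remains only to verify $A \in \mathscr{C}(k)$ by producing Klein's four-group inside ${\rm Aut}(A)$. Working in the $3$-dimensional space ${\rm Im}(A) = (k1)^\perp$ of trace-zero elements, on which $q$ is non-degenerate and anisotropic, I would use ${\rm char}(k) \neq 2$ to diagonalize $b$ and choose two orthogonal pure imaginary units $i,j$; for pure elements $\overline{x} = -x$, so orthogonality gives the anticommutation $ij = -ji$, while $i^2, j^2 \in k^\ast$. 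The inner automorphisms $\kappa_i, \kappa_j, \kappa_{ij}$ together with $\I_A$ then form Klein's four-group: each is a non-trivial involution, since $\kappa_i^2 = \kappa_{i^2} = \I_A$ as $i^2 \in k^\ast$ is central, and $\kappa_i\kappa_j = \kappa_{ij}$. I expect the two non-routine points to be the non-commutative associative case, where the crux is ruling out the intermediate centre $\dim_Z(A) = 2$ without appealing to the perfect-square index theorem, and this final construction of anticommuting pure imaginary units, which is exactly where the hypothesis ${\rm char}(k) \neq 2$ is indispensable.
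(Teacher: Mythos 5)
Your proof is correct, and it shares the paper's overall skeleton (the dimension formula of Lemma \ref{nucleus} (iii) to get $\dim_k(N) \in \{2,4\}$ and the characterization of $\mathscr{N}(k)$; an Artin-type fixed-field argument for $\mathscr{K}(k)$; central skew field plus anticommuting pure units for the Hurwitz inclusion), but it differs genuinely at one point, exactly the one you flag. Where the paper disposes of the associative non-commutative case by citing \cite[Theorem 7.15 (i)]{Re75} --- embedding the centre $Z$ in a maximal subfield $E$ and using $\dim_k(A) = \dim_Z(E)^2\dim_k(Z)$, so that $\dim_Z(A)$ is a square dividing $4$, hence in $\{1,4\}$ --- you exclude the intermediate case $\dim_Z(A) = 2$ elementarily: a skew field with $Z$-basis $(1,b)$ is generated over the central field $Z$ by the single element $b$, hence commutative, and this computation is correct. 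Your route trades the square-index theorem for a two-line verification and is more self-contained; the paper's citation buys brevity and settles both values of $\dim_Z(A)$ at once. Your treatment of $\mathscr{K}(k)$, via $|{\rm Gal}(A/k)| \le [A:k]$ with equality if and only if $A/k$ is Galois, is the counting form of the same fixed-field argument the paper draws from \cite[Propositions V.3.7 and V.3.6]{Gr07}, so this is a repackaging rather than a new idea. For $\mathscr{HD}_4(k) \subset \mathscr{S}(k)$ you reconstruct by hand what the paper cites from \cite[Corollary 1.6.3 and Proposition 1.2.3]{SV00}: deriving $ij = -ji$ from orthogonality of pure units via the linearized degree-two equation is precisely the content of those references. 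One small point you leave implicit: to see that $\{\I_A, \kappa_i, \kappa_j, \kappa_{ij}\}$ has four distinct elements, note that $ij$ is again pure (since $\overline{ij} = \overline{j}\,\overline{i} = ji = -ij$) and nonzero, so $ij \notin k$ and $\kappa_{ij} \neq \I_A$; together with $\kappa_i\kappa_j = \kappa_{ij}$ and the nontriviality of $\kappa_i, \kappa_j$, pairwise distinctness then follows automatically. This is routine and does not affect the validity of your argument.
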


\begin{proof}
The object classes $\mathscr{N}(k), \mathscr{S}(k)$ and $\mathscr{K}(k)$ are pairwise disjoint, and closed under isomorphisms within the groupoid $\mathscr{C}(k)$. Therefore, in order to prove 
the asserted decomposition of $\mathscr{C}(k)$, it suffices to show that $\mathscr{C}(k) \subset \mathscr{N}(k) \sqcup \mathscr{S}(k) \sqcup \mathscr{K}(k)$. So, let $A \in \mathscr{C}(k)$ be given.
If $A$ is not associative, then $A \in \mathscr{N}(k)$. If $A$ is associative, then $A$ is a skew field over $k$. Let $Z$ be the centre of $A$, and let $E$ be a maximal subfield of $A$. Then 
\cite[Theorem 7.15 (i)]{Re75} implies that  $k \subset Z \subset E \subset A$ and 
\[ \dim_k(A) = \dim_Z(A) \dim_k(Z) = \dim_Z(E)^2 \dim_k(Z). \]  
Thus $\dim_Z(A)$ divides 4 and is a square, whence $\dim_Z(A) \in \{ 1, 4 \}$. If $\dim_Z(A) = 4$, then $\dim_k(Z) = 1$ shows that $A$ is a central skew field \mbox{over $k$,} i.e.~$A \in \mathscr{S}(k)$.
If $\dim_Z(A) = 1$, then $\dim_k(Z) = 4$ shows that $A$ is a commu\-tative skew field over $k$, i.e.~$A \in \mathscr{K}(k)$.
\\[1ex]
For all $A \in \mathscr{C}(k)$, the dimension formula in Lemma \ref{nucleus} (iii) implies that $\dim_k(N) \in \{ 2, 4 \}$. Since $A$ is associative if and only if $A = N$, we conclude that $A$ is 
not associative if and only if $\dim_k(N) = 2$.
\\[1ex]
Let $A \in \mathscr{K}(k)$. Then Klein's four-group appears as a subgroup $H < {\rm Aut}(A)$. By \cite[Propositions V.3.7 and V.3.6]{Gr07}, the fixed field 
\[ F = A^H = \{ x \in A\ |\ \sigma(x) = x\ \forall \sigma \in H \} \] 
is an intermediate field $k \subset F \subset A$ such that $F \subset A$ is a Galois extension with Galois group $H$, whose degree is 
$[A:F] = |{\rm Gal}(A/F)| = |H| = 4$. It follows that $[F:k] = 1$, i.e.~$F = k$. Conversely, if $k \subset A$ is a Galois extension whose Galois group is Klein's four-group, then 
$[A:k] = |{\rm Gal}(A/k)| = 4$ and ${\rm Gal}(A/k) = {\rm Aut}(A)$, so $A \in \mathscr{K}(k)$.     
\\[1ex]
Classical theory of Hurwitz algebras states that every $A \in \mathscr{HD}_4(k)$ is a central skew field over $k$ \cite[Theorem 1.6.2 and Proposition 1.9.1]{SV00}. Therefore, in order to show that
$A \in \mathscr{S}(k)$, it suffices to exhibit Klein's four-group as a subgroup of ${\rm Aut}(A)$. For that purpose, we make use of the Skolem-Noether isomorphism 
$\overline{\kappa}: A^\ast/k^\ast\ \tilde{\to}\ {\rm Aut}(A),\ \overline{\kappa}(\overline{a}) = \kappa_a$, explained in Subsection 2.1. If ${\rm char}(k) \not= 2$, then \cite[Corollary 1.6.3]{SV00} 
asserts the existence of an orthogonal $k$-basis $(1, a, b, ab)$ in $A$. It follows that $\overline{a} \not= \overline{b}$ and, invoking
\cite[Proposition 1.2.3]{SV00}, that $\overline{a}^2 = \overline{b}^2 = \overline{1}$ and $\overline{a}\overline{b} = 
\overline{b}\overline{a}$. Accordingly, the subgroup $\langle \kappa_a, \kappa_b \rangle < {\rm Aut}(A)$ is Klein's four-group.
\end{proof}

\noindent
The inclusion of object classes $\mathscr{S}(k) \supset \mathscr{HD}_4(k)$, which we just proved, is in fact an identity (Corollary \ref{Hurwitz division algebras}). Our proof of the converse inclusion 
$\mathscr{S}(k) \subset \mathscr{HD}_4(k)$ must be postponed, as it is based on a closer analysis of the groupoid $\mathscr{C}(k)$, to be developed in Sections 3 and 4.

\section{Constructions}

Throughout this section, $k$ is a field of characteristic not 2, and $k \subset \ell$ is a quadratic extension. We refer to Subsection 2.2 for notation that goes along with these data.
In Subsection 3.1 we define the subset $C_\ell \subset k^3$, formed by all $\ell$-admissible triples in $k^3$. In Subsection 3.2 we construct from any \mbox{$\ell$-admissible} triple $\underline{c} \in C_\ell$ 
a division algebra $A(\ell,\underline{c}) \in \mathscr{C}(k)$, and we study how this 
construction behaves with respect to the decomposition of $\mathscr{C}(k)$. In Subsection 3.3 we associate with any pair $(\underline{c},\underline{d}) \in C_\ell \times C_\ell$ a subset 
$\ell^\ast(\underline{c},\underline{d}) \subset \ell^\ast$, and we construct from each $a \in \ell^\ast(\underline{c},\underline{d})$ two morphisms $\varphi_a: A(\ell,\underline{c}) \to A(\ell,\underline{d})$ 
and $\psi_a: A(\ell,\underline{c}) \to A(\ell,\underline{d})$ in $\mathscr{C}(k)$.

\subsection{The notion of an $\ell$-admissible triple}

A general function $f: \ell^2 \to \ell$ with $f(0,0) = 0$ is said to be {\it anisotropic} if $f^{-1}(0) = \{(0,0) \}$, and 
{\it isotropic} otherwise. With any triple of scalars 
\[ \underline{c} = (c_1, c_2, c_3) \in k^3 \] 
we associate a function
\[ q_{\underline{c}}: \ell^2 \to \ell,\ q_{\underline{c}}(x,y) = (1-c_1)x^2 + c_1 x\overline{x} - c_2 y^2 - c_3 y\overline{y}. \]
We say that a triple $\underline{c} \in k^3$ is $\ell$-{\it admissible} if the function $q_{\underline{c}}: \ell^2 \to \ell$ is anisotropic. The $\ell$-admissible triples in $k^3$ form a subset 
\[ C_\ell = \left\{ \underline{c} \in k^3\ \left|\ q_{\underline{c}}^{-1}(0) = \{(0,0)\} \right. \right\} \]
of $k^3$. Depending on the quadratic extension $k \subset \ell$, the subset $C_\ell \subset k^3$ may or may not be empty.

\subsection{Construction of objects in $\mathscr{C}(k)$}

From any triple $\underline{c} \in k^3$ we construct a 4-dimensional $k$-algebra $A(\ell,\underline{c})$, de\-fining it as the vector space $\ell^{2 \times 1}$ over $k$, equipped with the algebra structure
\[ \left( \begin{array}{c} x\\y \end{array} \right) \cdot \left( \begin{array}{c} w\\z \end{array} \right) = \left( \begin{array}{cc} x & c_2y + c_3\overline{y} \\ y & (1-c_1)x + c_1\overline{x} 
\end{array} \right) \left( \begin{array}{c} w\\z \end{array} \right), \]
where the right hand side is a product of $\ell$-matrices. 

\begin{pro} \label{constructed algebras}
Let $k \subset \ell$ be a quadratic extension in characteristic \mbox{not 2,} and let $\underline{c} \in k^3$. Then the 4-dimensional $k$-algebra $A = A(\ell,\underline{c})$ has the following properties.
\\[1ex]
(i) $A$ is unital, with unity
\[ 1_A = \left( \begin{array}{c} 1\\0 \end{array} \right). \] 
(ii) The subspace 
\[ 1_A \ell = \left( \begin{array}{c} \ell \\ 0 \end{array} \right) \subset A \] 
is a $k$-subalgebra, canonically isomorphic to $\ell$, and such that 
\[ 1_Ak \subset 1_A\ell \subset N_r(A) \subset A \]
is a filtration of $k$-subalgebras.
\\[1ex]
(iii) The operators $\alpha$ and $\beta$ on $A$, defined by 
\[ \alpha \left( \begin{array}{c} x\\y \end{array} \right) = \left( \begin{array}{r} x\\-y \end{array} \right)\hspace{1ex} \mbox{and}\hspace{1ex}
\beta \left( \begin{array}{c} x\\y \end{array} \right) = \left( \begin{array}{c} \overline{x} \\ \overline{y} \end{array} \right),  \]
are algebra automorphisms of $A$, which generate Klein's four-group.
\\[1ex]
(iv) $A(\ell,\underline{c})$ is a division algebra if and only if $\underline{c}$ is $\ell$-admissible.
\\[1ex]
\end{pro}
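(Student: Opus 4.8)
The plan is to read off all four properties from the matrix shape of the multiplication. Writing $u=\binom{x}{y}$ and $v=\binom{w}{z}$, the defining formula says exactly $u\cdot v=M(u)\,v$, where $M(u)=\left(\begin{smallmatrix} x & c_2y+c_3\overline y\\ y & (1-c_1)x+c_1\overline x\end{smallmatrix}\right)$ is an $\ell$-matrix acting on the column $v\in\ell^{2\times1}$. The key observation driving everything is that the left multiplication $L_u$ is therefore the $\ell$-\emph{linear} map represented by $M(u)$, whereas the right multiplications are not $\ell$-linear in general. For (i) I would just substitute: taking $u=\binom{1}{0}$ gives $M(u)=\mathrm{Id}$, hence $\binom{1}{0}\cdot v=v$, while $M(u)\binom{1}{0}=u$ shows $\binom{1}{0}$ is also a right unity.

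For (ii), setting $y=z=0$ yields $\binom{x}{0}\cdot\binom{w}{0}=\binom{xw}{0}$, so $1_A\ell$ is a subalgebra and $\binom{x}{0}\mapsto x$ is an isomorphism onto $\ell$. To place $1_A\ell$ in the right nucleus I would note that $R_{\binom{w}{0}}$ is simply scalar multiplication by $w\in\ell$ on $\ell^{2\times1}$, i.e.\ $R_{\binom{w}{0}}=w\cdot\mathrm{Id}$; since that scalar commutes with every $\ell$-linear $L_u=M(u)$, the identity $(uv)\binom{w}{0}=u(v\binom{w}{0})$ holds for all $u,v$, so $\binom{w}{0}\in N_r(A)$. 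Combined with Lemma \ref{nucleus}(ii) and the trivial inclusions $1_Ak\subset1_A\ell$ and $N_r(A)\subset A$, this gives the asserted filtration of subalgebras. For (iii), $k$-linearity of $\alpha$ is immediate and that of $\beta$ follows because conjugation fixes $k$; the multiplicativity of $\alpha$ and $\beta$ I would verify by feeding $-y,-z$, respectively $\overline x,\overline y,\overline w,\overline z$, into the product formula, the point for $\beta$ being that $c_1,c_2,c_3\in k$ are $\sigma$-fixed. Finally $\alpha^2=\beta^2=\I$ and a one-line check gives $\alpha\beta=\beta\alpha:\binom{x}{y}\mapsto\binom{\overline x}{-\overline y}$, so $\alpha,\beta,\alpha\beta$ are three distinct involutions and $\langle\alpha,\beta\rangle$ is Klein's four-group.

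The substantive part is (iv). Expanding the determinant gives $\det M(u)=(1-c_1)x^2+c_1x\overline x-c_2y^2-c_3y\overline y=q_{\underline c}(x,y)$, which is precisely the quadratic datum defining $\ell$-admissibility. Because $L_u=M(u)$ is $\ell$-linear on the two-dimensional $\ell$-space $\ell^{2\times1}$, it is injective, hence bijective by finite-dimensionality, if and only if $\det M(u)=q_{\underline c}(x,y)\neq0$ in $\ell$; and its kernel as a $k$-linear map coincides with its kernel as an $\ell$-linear map. Thus $A$ possesses a nonzero zero divisor exactly when $q_{\underline c}(x,y)=0$ for some $(x,y)\neq(0,0)$, i.e.\ exactly when $\underline c$ is \emph{not} $\ell$-admissible. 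Invoking the equivalence ``finite-dimensional with no zero divisors $\Leftrightarrow$ division algebra'' recalled in Subsection 2.3 then closes the argument.

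The hard part will be the bookkeeping in this last step rather than any deep idea: one must be careful that controlling the single $\ell$-determinant $q_{\underline c}$ really governs the entire division property. This works because a zero divisor relation $uv=0$ with $u\neq0$ forces $v\in\ker L_u$, so injectivity of all the $L_u$ ($u\neq0$) already rules out zero divisors, and no separate study of the (non-$\ell$-linear) right multiplications $R_v$ is needed. The remaining potential pitfall is conflating $k$-injectivity with $\ell$-injectivity of $L_u$; these agree because the map and its kernel are the same set under either scalar structure, so the rank computation may legitimately be carried out over $\ell$ via $\det M(u)$.
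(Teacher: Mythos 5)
Your proposal is correct and takes essentially the same route as the paper: direct verification for (i)--(iii), and for (ii) and (iv) the key facts that $R_{1_A w}$ is scalar multiplication by $w$ and that $L_a$ is $\ell$-linear with $\det(L_a)=q_{\underline{c}}(x,y)$, which are exactly the paper's identities $a\cdot(1_A l)=al$ and $a\cdot(bl)=(a\cdot b)l$ together with its determinant computation. Your closing remark --- that bijectivity of all $L_u$ with $u\neq 0$ already excludes zero divisors, so the non-$\ell$-linear operators $R_v$ need no separate analysis --- merely makes explicit the appeal to the finite-dimensionality criterion from Subsection 2.3 that the paper leaves implicit in the step ``$A$ is a division algebra if and only if $\det(L_a)\neq 0$ for all $a\in A\setminus\{0\}$''.
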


\begin{proof}
{\it (i)-(ii)} One verifies directly that $1_A = \left( \begin{array}{c} 1\\0 \end{array} \right)$ is a unity in $A$,\vspace{0,1cm} and that $1_A \ell$ is a subalgebra of $A$, which is 
canonically isomorphic to $\ell$. By construction, $A$ is also a 2-dimensional right vector space over $\ell$, such that the identity
\begin{equation} a \cdot (bl) = (a \cdot b)l \end{equation} 
holds for all $(a,b,l) \in A \times A \times \ell$. Setting $b = 1_A$, we see that
\begin{equation} a \cdot 1_Al = al \end{equation} 
holds for all $(a,l) \in A \times \ell$. Now (1) and (2) imply $1_A \ell \subset N_r(A)$, which establishes the claimed filtration of $k$-subalgebras.
\\[1ex]
{\it (iii)} One verifies directly, that $\alpha$ and $\beta$ are distinct commuting automorphisms of $A$, both of order 2. 
\\[1ex]
{\it (iv)} 
For all $a \in A$, the $k$-linear operator $L_a: A \to A$ is even $\ell$-linear, by (1). 
If $a = \left( \begin{array}{c} x\\y \end{array} \right)$, then
\[ \det(L_a) = \det \left( \begin{array}{cc} x & c_2y + c_3\overline{y} \\ y & (1-c_1)x + c_1\overline{x} \end{array} \right) = q_{\underline{c}}(x,y). \]
Now, $A$ is a division algebra if and only if $\det(L_a) \not= 0$ for all $a \in A \setminus \{0\}$. Equivalently, the function $q_{\underline{c}}: \ell^2 \to \ell$ is anisotropic, i.e.~$\underline{c}$ is 
$\ell$-admissible. 
\end{proof}

\noindent
As a direct consequence of Proposition \ref{constructed algebras}, we obtain the following construction of objects in $\mathscr{C}(k)$.

\begin{cor} \label{constructed division algebras}
If $\underline{c} \in C_\ell$, then $A(\ell,\underline{c}) \in \mathscr{C}(k)$.
\end{cor}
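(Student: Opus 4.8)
The plan is to deduce Corollary~\ref{constructed division algebras} directly from Proposition~\ref{constructed algebras}, essentially as an assembly of its four parts together with the membership conditions defining $\mathscr{C}(k)$. Recall that $\mathscr{C}(k)$ is the full subgroupoid of $\mathscr{D}_4(k)$ consisting of those $4$-dimensional division $k$-algebras $A$ that are unital, satisfy $k1 \subsetneq N_r(A)$, and whose automorphism group contains Klein's four-group. So the entire task reduces to checking these defining conditions for $A = A(\ell,\underline{c})$ under the hypothesis $\underline{c} \in C_\ell$.

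First I would observe that, by definition of $C_\ell$, the assumption $\underline{c} \in C_\ell$ means precisely that $\underline{c}$ is $\ell$-admissible. Hence Proposition~\ref{constructed algebras}(iv) immediately yields that $A(\ell,\underline{c})$ is a division algebra, and it is $4$-dimensional over $k$ by construction (it is the $k$-vector space $\ell^{2\times 1}$). Therefore $A \in \mathscr{D}_4(k)$. Next, Proposition~\ref{constructed algebras}(i) supplies the unity $1_A$, so $A$ is unital. For the nucleus condition, Proposition~\ref{constructed algebras}(ii) gives the filtration $1_A k \subset 1_A \ell \subset N_r(A) \subset A$ of $k$-subalgebras, in which $1_A \ell$ is canonically isomorphic to $\ell$ and hence $2$-dimensional over $k$; since $\dim_k(1_A k) = 1 < 2 = \dim_k(1_A\ell) \le \dim_k N_r(A)$, the inclusion $k1 = 1_A k \subsetneq N_r(A)$ is proper. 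Finally, Proposition~\ref{constructed algebras}(iii) provides two commuting order-$2$ automorphisms $\alpha,\beta$ of $A$ generating Klein's four-group, so $\mathrm{Aut}(A)$ contains Klein's four-group as a subgroup.

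Having verified every defining property, I would conclude that $A(\ell,\underline{c}) \in \mathscr{C}(k)$, which is the assertion. The proof is genuinely just a bookkeeping argument: each of the three membership requirements of $\mathscr{C}(k)$ corresponds to one part of the preceding proposition, with the division-algebra property coming from part~(iv) under the $\ell$-admissibility hypothesis encoded in $\underline{c}\in C_\ell$. I do not anticipate any real obstacle here, since all the substantive content — in particular the determinant computation $\det(L_a) = q_{\underline{c}}(x,y)$ that links anisotropy of $q_{\underline{c}}$ to the division property — has already been discharged in Proposition~\ref{constructed algebras}. The only point requiring the slightest care is making the strict inclusion $k1 \subsetneq N_r(A)$ explicit by noting $\dim_k(1_A\ell) = 2$, which is exactly what part~(ii) records.
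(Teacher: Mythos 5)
Your proof is correct and matches the paper's intent exactly: the paper states Corollary~\ref{constructed division algebras} as a direct consequence of Proposition~\ref{constructed algebras}, and your argument simply makes explicit the same bookkeeping, matching parts (i)--(iv) of that proposition to the defining conditions of $\mathscr{C}(k)$ (unitality, proper containment $k1_A \subsetneq N_r(A)$ via $\dim_k(1_A\ell)=2$, and the Kleinian subgroup $\langle\alpha,\beta\rangle < \mathrm{Aut}(A)$), with the division property coming from $\ell$-admissibility via part (iv).
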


\noindent
Let us summarize and introduce new notation. For every quadratic extension $k \subset \ell$ in characteristic not 2, we constructed a map 
\[ \mathscr{F}_\ell: C_\ell \to \mathscr{C},\ \mathscr{F}_\ell(\underline{c}) = A(\ell,\underline{c}), \]
where $\mathscr{C} = \mathscr{C}(k)$. The decomposition $\mathscr{C} = \mathscr{N} \amalg \mathscr{S} \amalg \mathscr{K}$ (Proposition \ref{decomposition}) induces via $\mathscr{F}_\ell$ a partition 
$C_\ell = C_\ell^{\mathscr{N}} \sqcup C_\ell^{\mathscr{S}} \sqcup C_\ell^{\mathscr{K}}$, whose subsets are implicitly\vspace{0,1cm} defined by $C_\ell^{\mathscr{N}} = \mathscr{F}_\ell^{-1}(\mathscr{N}),\ 
C_\ell^{\mathscr{S}} = \mathscr{F}_\ell^{-1}(\mathscr{S})$, and $C_\ell^{\mathscr{K}} = \mathscr{F}_\ell^{-1}(\mathscr{K})$. For later applications, we proceed to display these subsets explicitly.

\begin{pro} \label{types of admissible triples}
For all $\underline{c} \in C_\ell$, the following holds true.
\\[1ex]
(i) $A(\ell,\underline{c}) \in \mathscr{N}$ if and only if $(c_1,c_2) \not= (1,0)$ and $(c_1,c_3) \not= (0,0)$.
\\[1ex]
(ii) $A(\ell,\underline{c}) \in \mathscr{S}$ if and only if $(c_1,c_2) = (1,0)$.
\\[1ex]
(iii) $A(\ell,\underline{c}) \in \mathscr{K}$ if and only if $(c_1,c_3) = (0,0)$.
\end{pro}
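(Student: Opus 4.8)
The plan is to reduce all three equivalences to two structural statements about $A=A(\ell,\underline{c})$ and then to read off the blocks from Proposition~\ref{decomposition}. By Corollary~\ref{constructed division algebras}, $A\in\mathscr{C}$, so by Proposition~\ref{decomposition} exactly one of $A\in\mathscr{N}$, $A\in\mathscr{S}$, $A\in\mathscr{K}$ holds; moreover $A\in\mathscr{N}$ iff $A$ is not associative, $A\in\mathscr{K}$ iff $A$ is a field, and hence $A\in\mathscr{S}$ iff $A$ is associative but not commutative. It therefore suffices to prove the two claims \textbf{(C)} $A$ is commutative iff $(c_1,c_3)=(0,0)$, and \textbf{(A)} $A$ is associative iff $(c_1,c_2)=(1,0)$ or $(c_1,c_3)=(0,0)$. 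Granting these, (i) is precisely the negation of the right-hand side of (A); for (iii), if $(c_1,c_3)=(0,0)$ then (C) and (A) make $A$ a commutative associative division algebra, i.e.\ a field, so $A\in\mathscr{K}$, while $A\in\mathscr{K}$ forces commutativity and hence $(c_1,c_3)=(0,0)$ by (C); and (ii) follows because, combining (A) and (C), $A\in\mathscr{S}$ iff $(c_1,c_2)=(1,0)$, the second alternative $(c_1,c_3)=(0,0)$ being excluded by non-commutativity and being anyway incompatible with $(c_1,c_2)=(1,0)$.

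I work with the $\ell$-linear left multiplication operators from the proof of Proposition~\ref{constructed algebras}, writing $L_a=M(x,y)=\left(\begin{smallmatrix} x & c_2y+c_3\overline{y}\\ y & (1-c_1)x+c_1\overline{x}\end{smallmatrix}\right)$ for $a=\left(\begin{smallmatrix}x\\y\end{smallmatrix}\right)$, acting on columns. Since $L_a(1_A)=a$, associativity is equivalent to $M(x,y)M(w,z)=M(u,v)$ for all arguments, where $\left(\begin{smallmatrix}u\\v\end{smallmatrix}\right)=M(x,y)\left(\begin{smallmatrix}w\\z\end{smallmatrix}\right)$; as a matrix $M(u,v)$ is determined by its first column, the first columns automatically agree and only the second columns carry information. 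For the $\Leftarrow$ direction of (A) I identify the image $\{M(x,y)\mid x,y\in\ell\}$ as a multiplicatively closed subset of $\ell^{2\times 2}$: for $(c_1,c_2)=(1,0)$ it is the quaternion-type set of matrices $\left(\begin{smallmatrix}x & c_3\overline{y}\\ y & \overline{x}\end{smallmatrix}\right)$, and for $(c_1,c_3)=(0,0)$ it is the commutative algebra $\ell[J]$ with $J=\left(\begin{smallmatrix}0&c_2\\1&0\end{smallmatrix}\right)$ and $J^2=c_2$. Closure under multiplication together with the first-column remark shows $L_aL_b=L_{ab}$, so $A$ is associative; in the second case the image is commutative, which also gives the $\Leftarrow$ direction of (C).

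The substance lies in the forward directions. For (C), comparing $(1_Ax)\cdot e_2$ with $e_2\cdot(1_Ax)$, where $e_2=\left(\begin{smallmatrix}0\\1\end{smallmatrix}\right)$, shows that commutativity forces $(1-c_1)x+c_1\overline{x}=x$ for all $x\in\ell$, hence $c_1=0$; substituting $c_1=0$ into $M(x,y)M(w,z)=M(w,z)M(x,y)$ and choosing $y=1$ with $z$ purely imaginary isolates a term $c_3(\overline{y}z-y\overline{z})$ and forces $c_3=0$. For the forward direction of (A) I first note that $N_r(A)$ is a right $\ell$-subspace of $A$ containing $1_A\ell$: it is a subalgebra by Lemma~\ref{nucleus}(i) and is stable under right multiplication by $1_A\ell\subset N_r(A)$. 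Hence $\dim_\ell N_r(A)\in\{1,2\}$, with $N_r(A)=A$ in the second case, so that $A$ is associative iff $e_2\in N_r(A)$, i.e.\ iff $(ab)e_2=a(be_2)$ for all $a,b$. Evaluating the $(2,2)$-entry of this identity at $y=z=0$ yields $c_1(1-c_1)(x-\overline{x})(w-\overline{w})=0$ for all $x,w$, forcing $c_1\in\{0,1\}$; the $(1,2)$-entry then reduces, for $c_1=0$, to $c_3\big(\overline{z}(\overline{x}-x)+\overline{y}(\overline{w}-w)\big)=0$, forcing $c_3=0$, and for $c_1=1$ to $c_2\big(z(\overline{x}-x)+y(w-\overline{w})\big)=0$, forcing $c_2=0$. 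Thus associativity entails $(c_1,c_3)=(0,0)$ or $(c_1,c_2)=(1,0)$, which is (A).

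The main obstacle is purely computational: arranging the comparison of the second columns of $M(x,y)M(w,z)$ and $M(u,v)$ so that the specializations cleanly isolate the factors $c_1(1-c_1)$, $c_2$ and $c_3$. Once associativity has been reduced to the single membership condition $e_2\in N_r(A)$, these evaluations are routine, as is the final bookkeeping matching the three parameter regions to the blocks $\mathscr{N}$, $\mathscr{S}$, $\mathscr{K}$.
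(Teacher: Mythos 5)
Your proposal is correct and takes essentially the same route as the paper: both read the three blocks off the decomposition of Proposition \ref{decomposition} and settle associativity by testing whether $j=e_2$ lies in $N_r(A)$ (your dimension argument ``associative $\Leftrightarrow e_2\in N_r(A)$'' is exactly the mechanism the paper uses when it concludes $A=N$ from $j\in N$, and the paper's two test identities $(j\cdot ji)\cdot j=j\cdot(ji\cdot j)$, $(ji\cdot j)\cdot j=ji\cdot(j\cdot j)$ are just different specializations of your condition $(ab)e_2=a(be_2)$). All of your entrywise computations check out; the only tactical deviation is that your specialization at $y=z=0$ yields $c_1(1-c_1)=0$ directly, so your forward implication avoids the admissibility condition $q_{\underline{c}}(0,1)\not=0$ that the paper invokes to exclude $c_2=c_3=0$ --- a mild sharpening, not a different method.
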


\begin{proof}
Given $\underline{c} \in C_\ell$, set $A = A(\ell,\underline{c}),\ N = N_r(A),\ j = \left( \begin{array}{c} 0\\1 \end{array} \right) \in A$,\vspace{0,1cm} and choose $i \in {\rm Im}(\ell) \setminus \{0\}$. 
We will repeatedly use Proposition \ref{decomposition}.
\\[1ex]
{\it (i)} Assume that $(c_1, c_2) = (1,0)$ or $(c_1, c_3) = (0, 0)$. Then one verifies directly that $j \in N$. Since $A = 1_A\ell \oplus j\ell = 1_A\ell \oplus j \cdot (1_A \ell)$ by (2), and 
$1_A \ell \subset N$ by Proposition \ref{constructed algebras} (ii), we conclude that $A = N$. Thus $A$ is associative, i.e.~$A \not\in \mathscr{N}$. Assume conversely that $A \not\in \mathscr{N}$, 
i.e.~$A$ is associative. Then direct calculation shows that the system
\[ \left\{ \begin{array}{ccc} 
(j \cdot ji) \cdot j & = & j \cdot (ji \cdot j) \\
(ji \cdot j) \cdot j & = & ji \cdot (j \cdot j) 
\end{array} \right. \]
is equivalent to
\[ \left\{ \begin{array}{rcl}  -c_1c_2 + (1-c_1)c_3 & = & 0 \\ 
c_1c_2 + (1-c_1)c_3 & = & 0,
 \end{array} \right. \]
which in turn is equivalent to
\[ \left\{ \begin{array}{rcl}  
c_1c_2 & = & 0  \\ 
(1-c_1)c_3 & = & 0. 
\end{array} \right. \]
Since $1 \not= 0$ excludes $c_1 = 1-c_1 = 0$, and $q_{\underline{c}}(0,1) \not= 0$ excludes 
$c_2 = c_3 = 0$, we are left with the two alternatives $(c_1, c_2) = (1,0)$ or 
$(c_1, c_3) = (0, 0)$.
\\[1ex]
{\it (ii)-(iii)} If $(c_1, c_2) = (1,0)$, then $A \in \mathscr{S} \amalg \mathscr{K}$ holds by (i). Moreover, $q_{\underline{c}}(0,1) \not= 0$ implies $c_3 \not= 0$, which in turn implies 
$j \cdot ji \not= ji \cdot i$. So $A$ is not commutative, and hence $A \in \mathscr{S}$.

If $(c_1, c_3) = (0, 0)$, then one verifies directly that $A$ is commutative. Together with (i), it follows that $A \in \mathscr{K}$. 
 
In view of (i) and $\mathscr{S} \cap \mathscr{K} = \emptyset$, the ``only if'' statements in (ii) and (iii) are now direct consequences.
\end{proof}

\subsection{Construction of morphisms in $\mathscr{C}(k)$}

Every pair $(\underline{c}, \underline{d}) \in C_\ell \times C_\ell$ determines a subset $\ell^\ast(\underline{c}, \underline{d}) \subset \ell^\ast$, defined by  
\[ \ell^\ast(\underline{c}, \underline{d}) = \left\{ a \in \ell^\ast\ \left|\ \left(c_1, \frac{c_2}{a^2}, \frac{c_3}{a\overline{a}} \right) = (d_1, d_2, d_3) \right. \right\}. \]
Direct verification establishes the following lemma.

\begin{lem} \label{constructed morphisms}
For all $\underline{c}, \underline{d} \in C_\ell$ and $a \in \ell^\ast$, the following holds true.
\\[1ex]
(i) $a \in \ell^\ast(\underline{c}, \underline{d})$ if and only if $\overline{a}\in \ell^\ast(\underline{c}, \underline{d})$. 
\\[1ex]
(ii) If $a \in \ell^\ast(\underline{c}, \underline{d})$, then the maps 
\[ \varphi_a: A(\ell,\underline{c}) \to A(\ell,\underline{d}),\ \varphi_a \left( \begin{array}{c} x\\y \end{array} \right) = \left( \begin{array}{r} x\\ay \end{array} \right) \]
and
\[ \psi_a: A(\ell,\underline{c}) \to A(\ell,\underline{d}),\ \psi_a \left( \begin{array}{c} x\\y \end{array} \right) = \left( \begin{array}{r} \overline{x}\\a\overline{y} \end{array} \right) \]
are morphisms in $\mathscr{C}(k)$. 
\\[1ex]
(iii) $\{ \pm 1 \} \subset \ell^\ast(\underline{c},\underline{c})$, and the automorphisms $\varphi_{-1}$ and $\psi_1$ of $A(\ell,\underline{c})$ coincide with the automorphisms $\alpha$ and $\beta$  of 
$A(\ell,\underline{c})$, displayed in \mbox{Proposition \ref{constructed algebras} (iii).}
\end{lem}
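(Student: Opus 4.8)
The plan is to carry out three short, essentially independent direct verifications. The organising observation is that the condition $a \in \ell^\ast(\underline{c},\underline{d})$ unpacks into exactly the three scalar relations
\[ c_1 = d_1, \qquad c_2 = d_2 a^2, \qquad c_3 = d_3\, a\overline{a}, \]
and that these three relations are precisely the identities that the coefficient comparison in part (ii) will demand. Parts (i) and (iii) then follow from elementary manipulations with the Galois involution $\sigma$ and with the scalars $c_i, d_i \in k$.

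For part (i), I would record that $a \in \ell^\ast(\underline{c},\underline{d})$ is equivalent to the conjunction of the three displayed relations. The first relation does not involve $a$, and the third is unchanged under $a \mapsto \overline{a}$ because $a\overline{a} = \overline{a}\cdot\overline{\overline{a}}$. The middle relation $c_2 = d_2 a^2$ becomes, upon applying $\sigma$ and using $\overline{c_2} = c_2$ and $\overline{d_2} = d_2$, the relation $c_2 = d_2 \overline{a}^2$, which is the middle relation for $\overline{a}$. Since $\sigma$ is an involution, this gives the asserted equivalence. The one point worth flagging is that $a^2 \neq \overline{a}^2$ in general, so one must not compare the middle relations for $a$ and $\overline{a}$ termwise; it is the $\sigma$-invariance of the scalars $c_2, d_2$ that makes the condition symmetric.

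For part (ii), I would take arbitrary elements $u,v$ of $A(\ell,\underline{c})$ with coordinates $(x,y)$ and $(w,z)$, expand their product by the defining matrix multiplication, and compare $\varphi_a(uv)$ with the product $\varphi_a(u)\cdot\varphi_a(v)$ formed in $A(\ell,\underline{d})$. Matching the two entries of the resulting columns, the top entries agree if and only if $d_2 a^2 = c_2$ and $d_3\, a\overline{a} = c_3$, and the bottom entries agree if and only if $d_1 = c_1$; that is, exactly when $a \in \ell^\ast(\underline{c},\underline{d})$. The verification for $\psi_a$ has the same shape, the only extra bookkeeping being that $\sigma$ is applied throughout and one uses $\overline{\overline{x}} = x$ and $\overline{c_i} = c_i$, whereupon the same three relations re-emerge as the matching conditions. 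Both $\varphi_a$ and $\psi_a$ are $k$-linear (as $\sigma$ is $k$-linear and $a$ is a fixed scalar) and both send $1_A$ to $1_A$, hence are non-zero. Since $A(\ell,\underline{c})$ and $A(\ell,\underline{d})$ lie in $\mathscr{C}(k)$ by Corollary \ref{constructed division algebras}, and $\mathscr{C}(k)$ is a full subgroupoid of $\mathscr{D}_4(k)$, any non-zero algebra morphism between them is a morphism in $\mathscr{C}(k)$.

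For part (iii), substituting $a = 1$ and $a = -1$ gives $a^2 = 1$ and $a\overline{a} = 1$ in both cases, so the three conditions for membership in $\ell^\ast(\underline{c},\underline{c})$ collapse to identities and yield $\{\pm 1\} \subset \ell^\ast(\underline{c},\underline{c})$. The two coincidences are then read off from the defining formulas: $\varphi_{-1}$ sends $(x,y)$ to $(x,-y)$, which is $\alpha$, and $\psi_1$ sends $(x,y)$ to $(\overline{x},\overline{y})$, which is $\beta$, as displayed in Proposition \ref{constructed algebras} (iii). I expect no genuine obstacle, since the lemma is a direct verification; the only conceptual subtlety is the $\sigma$-argument in (i), while the labour in (ii) is the entry-by-entry bookkeeping of the two $2\times 2$ matrix products for both $\varphi_a$ and $\psi_a$.
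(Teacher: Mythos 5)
Your proof is correct and coincides with the paper's approach: the paper dispatches this lemma with the single line ``Direct verification establishes the following lemma,'' and your computation is precisely that verification, correctly unpacking $a \in \ell^\ast(\underline{c},\underline{d})$ into the three relations $c_1 = d_1$, $c_2 = d_2 a^2$, $c_3 = d_3 a\overline{a}$ and matching them against the entries of the defining matrix products for both $\varphi_a$ and $\psi_a$. Your flagged subtlety in (i) --- that the symmetry under $a \mapsto \overline{a}$ comes from applying $\sigma$ and using $\overline{c_2} = c_2$, $\overline{d_2} = d_2$, not from comparing $a^2$ with $\overline{a}^2$ --- is exactly the right observation.
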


\section{Reductions}

Even in this section, $k$ is a field of characteristic not 2. Tracing the constructions of the previous section in the reverse direction, we achieve reductions of objects and 
morphisms in $\mathscr{C}(k)$, that can be phrased as follows. For every division algebra $A \in \mathscr{C}(k)$ there is a quadratic extension $k \subset \ell$ and an $\ell$-admissible triple 
$\underline{c} \in C_\ell$ such that $A(\ell,\underline{c})\ \tilde{\to}\ A$. For every quadratic extension $k \subset \ell$ and all $(\underline{c}, \underline{d}) \in C_\ell \times C_\ell$, the division
algebras $A(\ell,\underline{c})$ and $A(\ell,\underline{d})$ are isomorphic if and only if the subset $\ell^\ast(\underline{c},\underline{d}) \subset \ell^\ast$ is not empty. 

These reduction statements can be proved in the way of a generalization of \cite{BA15} from finite fields of odd order to general fields $k$ of characteristic not 2. This proof is 
elementary, but yet laborious and somewhat obscure. Instead, we present here a conceptual and more lucid approach, suggested by an anonymous referee, to whom credit must be given for this improvement. 
It is based on the insight, that every $A \in \mathscr{C}(k)$ is {\rm V}-graded, and on the usefulness of the trace bilinear form of $A$.

\subsection{{\rm V}-grading and trace bilinear form of objects in $\hat{\mathscr{C}}(k)$}

If $\varphi$ is an automorphism of a $k$-algebra $A$ and $\varepsilon \in k$ is an eigenvalue of $\varphi$, then we denote by $E_\varphi(\varepsilon) = \{ x \in A\ |\ \varphi(x) = \varepsilon x \}$ the 
eigenspace of $\varphi$ associated with the eigenvalue $\varepsilon$.

Let $\alpha$ and $\beta$ be automorphisms of a $k$-algebra $A$. We say that $(\alpha,\beta)$ is a {\it Kleinian pair} for $A$ if $\langle \alpha,\beta \rangle$ is Klein's four-group. Equivalently,
$\alpha$ and $\beta$ are distinct commuting automorphisms of order 2.

Because the right nucleus of a division algebra $A \in \mathscr{C}(k)$ is irrelevant for the material to be expounded in this subsection, we introduce the full subgroupoid $\hat{\mathscr{C}}(k)$ of 
$\mathscr{D}_4(k)$, formed by all $A \in \mathscr{D}_4(k)$ which are unital and admit a Kleinian pair. With this definition, $\mathscr{C}(k) \subset \hat{\mathscr{C}}(k) \subset \mathscr{D}_4(k)$ is a
filtration of full subgroupoids.

Recall our notation ${\rm G} = {\rm C}_2 \times {\rm C}_2$ and ${\rm V} = \Z_2 \times \Z_2$, for two specific instances of Klein's four-group. The identity element in ${\rm G}$ is $e = (1,1)$, 
and general elements in ${\rm V}$ will be denoted by $ij = (\overline{i},\overline{j})$, for brevity.

\begin{pro} \label{V-grading}
For every division algebra $A \in \hat{\mathscr{C}}(k)$ with Kleinian pair $(\alpha,\beta)$, the subspaces $A_{ij} = A_{ij}^{\alpha\beta} = E_\alpha \left((-1)^i \right) \cap E_\beta \left( (-1)^j \right)$
of $A$,\vspace{0,1cm} $ij \in {\rm V}$, form a {\rm V}-grading $A = \bigoplus_{ij \in {\rm V}} A_{ij}$ of $A$, such that \mbox{$\dim_k(A_{ij}) = 1$} for all $ij \in {\rm V}$, and $A_{00} = k1_A$. 
\end{pro}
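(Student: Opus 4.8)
The plan is to obtain the decomposition $A = \bigoplus_{ij \in {\rm V}} A_{ij}$ first as a direct sum of vector spaces by simultaneous diagonalization, then to check that it is compatible with the multiplication, and finally to pin down the dimensions using the division-algebra hypothesis.

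First I would diagonalize. Since ${\rm char}(k) \neq 2$ and $\alpha$ has order 2, its minimal polynomial divides $X^2 - 1 = (X-1)(X+1)$, a product of distinct linear factors; hence $\alpha$ is diagonalizable with $A = E_\alpha(1) \oplus E_\alpha(-1)$, and likewise for $\beta$. As $\beta$ commutes with $\alpha$, it preserves each $E_\alpha(\varepsilon)$ and restricts there to an operator whose minimal polynomial still divides $X^2-1$; diagonalizing these restrictions yields $A = \bigoplus_{ij \in {\rm V}} A_{ij}$ with $A_{ij} = E_\alpha((-1)^i) \cap E_\beta((-1)^j)$. Because $\alpha$ and $\beta$ fix $k1_A$ elementwise, $1_A \in A_{00}$, so $k1_A \subseteq A_{00}$.

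Next I would verify multiplicativity. For $x \in A_{ij}$ and $y \in A_{kl}$, applying the automorphism $\alpha$ gives $\alpha(xy) = \alpha(x)\alpha(y) = (-1)^{i+k} xy$, and similarly $\beta(xy) = (-1)^{j+l} xy$; hence $xy \in A_{(i+k)(j+l)}$, i.e. $A_{ij} A_{kl} \subseteq A_{(ij)+(kl)}$ with indices read in $\Z_2$. This is precisely the assertion that $(A_{ij})$ is a ${\rm V}$-grading of the algebra $A$.

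The dimension count is where the division hypothesis enters, and I expect it to be the main point. Let $S = \{ ij \in {\rm V} \mid A_{ij} \neq 0 \}$ be the support. Since $A$ has no zero divisors, the product of two nonzero homogeneous elements is again a nonzero homogeneous element, so $S$ is closed under the group operation of ${\rm V}$; as $00 \in S$ and ${\rm V}$ is finite with every element self-inverse, $S$ is a subgroup of ${\rm V}$. I would then rule out the proper subgroups using that $\alpha, \beta$ are distinct and of order 2: if $|S| = 1$ then $A = A_{00}$ forces $\alpha = \beta = \I$; if $S$ is $\{00,10\}$ or $\{00,01\}$ then $A$ lies in a single eigenspace of $\beta$ resp. $\alpha$, forcing that automorphism to equal $\I$; and if $S = \{00,11\}$ then $\alpha$ and $\beta$ act identically on both components, forcing $\alpha = \beta$. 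Each case contradicts the Kleinian-pair hypothesis, so $S = {\rm V}$. Finally all four components are nonzero and $\sum_{ij} \dim_k A_{ij} = \dim_k A = 4$, whence $\dim_k A_{ij} = 1$ for every $ij \in {\rm V}$; in particular $\dim_k A_{00} = 1$ together with $k1_A \subseteq A_{00}$ gives $A_{00} = k1_A$.
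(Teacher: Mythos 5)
Your proof is correct, and it takes a genuinely different route from the paper's. The paper turns $A$ into a module over the semisimple group algebra $k{\rm G}$, identifies the isotypic components with the spaces $A_{ij}$ via Krull--Schmidt, and then determines the multiplicities by a character computation: for each $g \in {\rm G} \setminus \{e\}$ it picks $u \in E_g(-1) \setminus \{0\}$ and uses the bijectivity of $L_u$, which interchanges $E_g(1)$ and $E_g(-1)$, to conclude $\dim_k E_g(\pm 1) = 2$, hence $\chi_A = \chi_{k{\rm G}}$, hence $A \simeq k{\rm G}$ as $k{\rm G}$-modules. You instead obtain the decomposition by elementary simultaneous diagonalization (valid since ${\rm char}(k) \neq 2$ makes the involutions $\alpha$, $\beta$ diagonalizable with eigenvalues $\pm 1$), verify the grading directly from the automorphism property, and invoke the division hypothesis only through the absence of zero divisors: the support $S$ of the grading is multiplicatively closed and contains $00$, hence is a subgroup of ${\rm V}$, and your case analysis of the proper subgroups is complete --- $\{00\}$ forces $\alpha = \beta = \I$, the subgroups $\{00,10\}$ and $\{00,01\}$ force $\beta = \I$ resp.\ $\alpha = \I$, and $\{00,11\}$ forces $\alpha = \beta$, each contradicting the Kleinian-pair hypothesis. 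With all four components nonzero, $\sum_{ij} \dim_k A_{ij} = 4$ pins each dimension to $1$, and $k1_A \subseteq A_{00}$ then gives $A_{00} = k1_A$, exactly as claimed. As for what each approach buys: yours is more elementary and self-contained, needing no semisimplicity, no Krull--Schmidt, and no linear independence of irreducible characters (the paper cites Curtis--Reiner for the last point), and the support-is-a-subgroup trick applies verbatim to gradings by any elementary abelian $2$-group on an algebra without zero divisors; the paper's argument is more conceptual in that it identifies $A$ as the regular representation of ${\rm G}$, and its eigenspace-swapping lemma shows the stronger pointwise fact $\dim_k E_g(1) = \dim_k E_g(-1) = 2$ for every nontrivial $g$, not merely the nonvanishing of the joint eigenspaces.
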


\begin{proof}
The group algebra $k{\rm G}$ is semisimple, because ${\rm char}(k)\hspace{-0,1cm} \not|\ |{\rm G}|$. It admits four isoclasses of simple modules, represented by the 1-dimensional $k{\rm G}$-modules 
$S_{ij},\ ij \in {\rm V}$, given by $S_{ij} = k$ and $a1 = (-1)^i,\ b1 = (-1)^j$. 

Given $A$ and $(\alpha,\beta)$ as in the statement, we choose an isomorphism ${\rm G}\ \tilde{\to}\ \langle \alpha,\beta \rangle$ and compose it with the inclusion morphism $\langle \alpha,\beta \rangle 
\hookrightarrow {\rm Aut}(A)$, to obtain a group monomorphism $\rho: {\rm G} \hookrightarrow {\rm Aut}(A)$. This endows $A$ with the structure of a $k{\rm G}$-module, which decomposes into simple 
submodules
\[ A = U_1 \oplus U_2 \oplus U_3 \oplus U_4 = \bigoplus_{ij \in {\rm V}} B_{ij}, \]
where $B_{ij} = \bigoplus_{U_h \simeq S_{ij}}U_h$. For all $ij \in {\rm V}$,\vspace{0,1cm} the inclusion $B_{ij} \subset A_{ij}$ holds by definition of $B_{ij}$ and $A_{ij}$, whence $B_{ij} = A_{ij}$ follows
with Krull-Schmidt's Theorem. Thus, we proved the direct sum decomposition $A = \bigoplus_{ij \in {\rm V}} A_{ij}$. Straightforward arguments show that this is a 
{\rm V}-grading of $A$, such that $1 \in A_{00}$. It remains to show that $m_{ij} = \dim_k(A_{ij}) = 1$ for all $ij \in {\rm V}$.

Let $\chi_{ij}: {\rm G} \to k$ be the character of $S_{ij}$. Then the characters of the $k{\rm G}$-modules $A$ and $k{\rm G}$ are $\chi_A = \sum_{ij \in {\rm V}} m_{ij}\chi_{ij}$ and 
$\chi_{k{\rm G}} = \sum_{ij \in {\rm V}} \chi_{ij}$, respectively. Since the irreducible characters $\chi_{ij}$ are linearly independent over $k$ \cite[Theorem 30.12 (i)]{CR62}, it suffices to show that 
$\chi_A = \chi_{k{\rm G}}$.

So, let $g \in {\rm G} \setminus \{ e \}$ and set $E_g(\varepsilon) = E_{\rho(g)}(\varepsilon)$ for all $\varepsilon \in \{ \pm 1 \}$. Then $A = E_g(1) \oplus E_g(-1)$, because
$x = \frac{1}{2}(x + gx) + \frac{1}{2}(x - gx)$ holds for all $x \in A$. Now $g \not= e$ guarantees the existence 
of an element $u \in E_g(-1) \setminus \{0\}$. Since $A$ is a division algebra and $\rho(g) \in {\rm Aut}(A)$, the $k$-linear operator $L_u: A \to A,\ L_u(x) = ux$, 
is bijective and satisfies $L_u(E_g(1)) \subset E_g(-1)$ and $L_u(E_g(-1)) \subset E_g(1)$. It follows that $\dim_k(E_g(1)) = \dim_k(E_g(-1)) = 2$, and hence that $\chi_A(g) = 0 = \chi_{k{\rm G}}(g)$. 
Moreover, $\chi_A(e) = 4 = \chi_{k{\rm G}}(e)$. Altogether, we proved that $\chi_A = \chi_{k{\rm G}}$.
\end{proof}

\begin{pro} \label{trace bilinear form}
(i) For every division algebra $A \in \hat{\mathscr{C}}(k)$, the trace bilinear form $\tau_A: A \times A \to k,\ \tau_A(x,y) = {\rm tr}(L_{xy})$ is symmetric and non-degenerate. 
\\[1ex]
(ii) Every morphism $\varphi: A \to B$ in $\hat{\mathscr{C}}(k)$ is orthogonal with regard to $\tau_A$ and $\tau_B$, i.e. $\tau_A(x,y) = \tau_B(\varphi(x),\varphi(y))$ for all $x,y \in A$.
\end{pro}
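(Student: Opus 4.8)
The plan is to prove both assertions by exploiting the \textrm{V}-grading $A = \bigoplus_{ij \in \textrm{V}} A_{ij}$ from Proposition \ref{V-grading}, together with the fact that $A_{00} = k1_A$ and each homogeneous component is one-dimensional. The first step is to record how the trace form interacts with the grading. For $x \in A_{pq}$ and $y \in A_{rs}$, one has $L_{xy}$ acting on $A$, and since automorphisms $\alpha, \beta$ commute with left multiplication only up to the grading, I expect the key identity to be that $\tau_A(x,y) = 0$ unless $pq + rs = 00$ in \textrm{V}, i.e.\ unless $A_{pq}$ and $A_{rs}$ are ``dual'' homogeneous components. This is because $L_{xy}$ maps $A_{uv}$ into $A_{(p+r+u)(q+s+v)}$, so its trace is a sum of contributions only from those $uv$ fixed by the shift, which forces $pq+rs = 00$; otherwise the operator $L_{xy}$ is a block-permutation with zero diagonal and hence trace zero.

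For symmetry in part (i), I would first verify $\tau_A(x,y) = \tau_A(y,x)$. Since $\mathrm{tr}(L_{xy})$ and $\mathrm{tr}(L_{yx})$ need not coincide in a non-associative algebra, the natural route is to reduce to homogeneous $x,y$ by bilinearity, and then use the grading-duality above to see that only the diagonal pairing of each $A_{ij}$ with its dual component contributes; on such a pair one computes both traces explicitly on the one-dimensional pieces and checks equality, likely invoking that $1_A \in A_{00}$ acts as identity so that $\tau_A(1_A, x) = \mathrm{tr}(L_x)$ behaves symmetrically. Alternatively, symmetry may follow more cleanly by relating $\tau_A$ to the norm form $q_{\underline{c}}$ or to the determinant of $L_a$ computed in Proposition \ref{constructed algebras}; but since part (i) is stated for all of $\hat{\mathscr{C}}(k)$, I prefer the intrinsic grading argument. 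For non-degeneracy, it suffices to show that the Gram matrix of $\tau_A$ with respect to a homogeneous basis is invertible. By the duality, this matrix is block-anti-diagonal, pairing each $A_{ij}$ with $A_{ij}$ (self-dual under $+$) or with its partner; since each block is a $1 \times 1$ scalar, non-degeneracy reduces to checking that each such scalar $\tau_A(e_{ij}, e_{i'j'})$ with $ij + i'j' = 00$ is nonzero. Because $A$ is a division algebra, $L_{xy}$ for nonzero homogeneous $x,y$ with $xy \ne 0$ is bijective, and I expect a short argument that its trace cannot vanish on the relevant one-dimensional pairing --- this is the step I anticipate as the \emph{main obstacle}, since controlling a single trace value rather than full rank requires genuinely using that $A$ has no zero divisors, perhaps by showing $L_{xy}$ restricted to the relevant invariant plane is a scalar or a fixed-point-free involution-type map.

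For part (ii), the orthogonality of morphisms, the plan is purely formal once part (i) is in hand. Given a morphism $\varphi: A \to B$ in $\hat{\mathscr{C}}(k)$, I would use that $\varphi$ is an algebra isomorphism (every morphism of division algebras of equal finite dimension is an isomorphism, as recalled in Subsection 2.1), so $\varphi \circ L_a = L_{\varphi(a)} \circ \varphi$ for all $a \in A$. Applying this with $a = xy$ and using $\varphi(xy) = \varphi(x)\varphi(y)$ gives $L_{\varphi(x)\varphi(y)} = \varphi \circ L_{xy} \circ \varphi^{-1}$, a conjugation that preserves the trace. Hence $\tau_B(\varphi(x), \varphi(y)) = \mathrm{tr}(L_{\varphi(x)\varphi(y)}) = \mathrm{tr}(\varphi L_{xy} \varphi^{-1}) = \mathrm{tr}(L_{xy}) = \tau_A(x,y)$, which is exactly the claim. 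This part requires no case analysis and no use of the grading, only the conjugation-invariance of the trace, so I expect it to be entirely routine; all the real content lies in establishing symmetry and especially non-degeneracy in part (i).
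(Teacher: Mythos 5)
Your overall architecture matches the paper's: symmetry via the \textrm{V}-grading (your observation that $\mathrm{tr}(L_u)=0$ whenever $u$ lies in a non-identity homogeneous component, so that off-diagonal entries of the Gram matrix in a homogeneous basis vanish, is exactly the paper's argument — and note that since $\mathrm{V}$ is $2$-torsion, every $A_{ij}$ is its own ``dual'', so the Gram matrix is genuinely diagonal and symmetry is immediate, with no explicit computation on the diagonal needed), and part (ii) via conjugation-invariance of the trace, $L_{\varphi(z)} = \varphi L_z \varphi^{-1}$, which is the paper's computation phrased basis-free. Both of these parts are sound as written.

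The one place where your proposal stops short is precisely the step you flag as the ``main obstacle'', and the point to record is that there is no obstacle at all: for nonzero homogeneous $x,y \in A_{ij}$, the grading forces $xy \in A_{ij+ij} = A_{00} = k1_A$, and since $A$ is a division algebra, $xy = \lambda 1_A$ with $\lambda \ne 0$; hence $L_{xy} = \lambda\, \mathrm{id}_A$ and $\tau_A(x,y) = 4\lambda \ne 0$ because ${\rm char}(k) \ne 2$. This is exactly Lemma \ref{orthogonality} (ii) of the paper; no analysis of invariant planes or fixed-point-free maps is required. You already wrote down the fact $A_{00}=k1_A$ that dissolves the difficulty, but did not combine it with the identity $ij+ij=00$ in $\mathrm{V}$. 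Be aware also that the paper sidesteps the Gram matrix entirely for non-degeneracy: given $x \ne 0$, bijectivity of $L_x$ yields $y$ with $xy = 1_A$, whence $\tau_A(x,y) = {\rm tr}(L_{1_A}) = 4 \ne 0$ — a two-line argument needing no grading, which is then reserved for symmetry alone. Your route, once patched as above, is also correct and in fact yields the finer conclusion that any homogeneous basis is orthogonal with anisotropic diagonal, information the paper extracts separately in Lemma \ref{orthogonality} and exploits in Proposition \ref{orthogonal supplement}.
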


\begin{proof}
{\it (i)} Every $A \in \hat{\mathscr{C}}(k)$ admits a {\rm V}-grading $A = \bigoplus_{ij \in {\rm V}} A_{ij}$\vspace{0,1cm} (Proposition \ref{V-grading}). Set $U = A_{01} \cup A_{10} \cup A_{11}$. 
Then ${\rm tr}(L_u) = 0$ for all $u \in U$. Choose a $k$-basis $(a_{ij})_{ij \in {\rm V}}$ in $A$ such that $a_{ij} \in A_{ij}$ for all $ij \in {\rm V}$. 
If $ij$ and $mn$ are distinct elements in ${\rm V}$, then $a_{ij}a_{mn} \in U$ and $a_{mn}a_{ij} \in U$ implies that $\tau_A(a_{ij},a_{mn}) = 0 = \tau_A(a_{mn},a_{ij})$, whence symmetry of $\tau_A$ follows. 

For every $x \in A \setminus \{0\}$ there exists an element $y \in A$ such that $xy = 1$. Then $\tau_A(x,y) = {\rm tr}(L_1) = 4 \not= 0$ shows that $\tau_A$ is non-degenerate.
\\[1ex]
{\it (ii)} Every morphism $\varphi: A \to B$ in $\hat{\mathscr{C}}(k)$ is a $k$-linear bijection. For all $z \in A$ and for every $k$-basis $\underline{a}$ in $A$, the matrix of $L_z$ in $\underline{a}$ 
equals the matrix of $L_{\varphi(z)}$ in $\varphi(\underline{a})$, whence ${\rm tr}(L_z) = {\rm tr}(L_{\varphi(z)})$ follows. Thus
\[ \tau_A(x,y) = {\rm tr}(L_{xy}) = {\rm tr}(L_{\varphi(xy)}) = {\rm tr}(L_{\varphi(x)\varphi(y)}) = \tau_B(\varphi(x),\varphi(y)) \]
holds for all $x,y \in A$.  
\end{proof}

\begin{lem} \label{orthogonality}
Let $A$ be a division algebra in $\hat{\mathscr{C}}(k)$, with Kleinian pair $(\alpha,\beta)$ and associated {\rm V}-grading $A = \bigoplus_{ij \in {\rm V}} A_{ij}$. Then,\vspace{0,1cm} for all 
$(ij,mn) \in {\rm V} \times {\rm V}$ and $(x_{ij},y_{mn}) \in A_{ij} \times A_{mn}$, the following holds true.
\\[1ex]
(i) $\tau_A(x_{ij},y_{mn}) = 0$\hspace{0,1cm} if\hspace{0,1cm} $ij \not= mn$. 
\\[1ex]
(ii) $\tau_A(x_{ij},y_{ij}) = 0$\hspace{0,1cm} if and only if\hspace{0,1cm} $x_{ij}y_{ij} = 0$.
\end{lem}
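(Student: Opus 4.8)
The plan is to reduce both claims to two structural facts about the {\rm V}-grading furnished by Proposition \ref{V-grading}: the multiplicative compatibility $A_{ij} \cdot A_{mn} \subseteq A_{(ij)+(mn)}$, and the vanishing of ${\rm tr}(L_u)$ on every graded component other than $A_{00}$.

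First I would record the trace-vanishing fact, which is essentially the observation already exploited in the proof of Proposition \ref{trace bilinear form}. Given $u \in A_{pq}$ with $pq \neq 00$, at least one of $\alpha, \beta, \alpha\beta$ sends $u$ to $-u$; call it $\gamma$. Since $\gamma$ is an algebra automorphism, one has the conjugation identity $L_{\gamma(u)} = \gamma \circ L_u \circ \gamma^{-1}$, whence ${\rm tr}(L_u) = {\rm tr}(L_{\gamma(u)}) = {\rm tr}(L_{-u}) = -{\rm tr}(L_u)$. As ${\rm char}(k) \neq 2$, this yields ${\rm tr}(L_u) = 0$.

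For part (i), the key point is that in {\rm V} every element is its own inverse, so $ij \neq mn$ forces $(ij)+(mn) \neq 00$. Hence $x_{ij}y_{mn}$ lies in a graded component distinct from $A_{00}$, and $\tau_A(x_{ij}, y_{mn}) = {\rm tr}(L_{x_{ij}y_{mn}}) = 0$ by the trace-vanishing fact. For part (ii), the same compatibility places $x_{ij}y_{ij}$ in $A_{00} = k1_A$ (Proposition \ref{V-grading}), so $x_{ij}y_{ij} = \lambda 1_A$ for a unique $\lambda \in k$. Then $\tau_A(x_{ij}, y_{ij}) = {\rm tr}(L_{\lambda 1_A}) = \lambda\, {\rm tr}(L_{1_A}) = 4\lambda$, because $L_{1_A}$ is the identity operator on the 4-dimensional space $A$. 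Since ${\rm char}(k) \neq 2$ makes $4$ invertible in $k$, the equality $\tau_A(x_{ij}, y_{ij}) = 0$ holds exactly when $\lambda = 0$, i.e.~when $x_{ij}y_{ij} = 0$.

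I expect no serious obstacle: both parts are short once the grading is in hand. The only delicate point is the interplay between ${\rm char}(k) \neq 2$ and the factor $4 = {\rm tr}(L_{1_A})$ --- it is precisely this that makes $\tau_A(x_{ij}, y_{ij}) = 0$ equivalent to $x_{ij}y_{ij} = 0$ in part (ii), and the same characteristic hypothesis underpins the trace-vanishing fact used in part (i).
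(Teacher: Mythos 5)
Your proof is correct and follows essentially the same route as the paper: part (i) is exactly the argument used to prove symmetry of $\tau_A$ in Proposition \ref{trace bilinear form} (grading compatibility places $x_{ij}y_{mn}$ in a component other than $A_{00}$, where traces of left multiplications vanish), and part (ii) is the paper's observation that $x_{ij}y_{ij} \in A_{00} = k1_A$ together with ${\rm tr}(L_{1_A}) = 4 \neq 0$ in characteristic not $2$. You merely make explicit, via the conjugation identity $L_{\gamma(u)} = \gamma L_u \gamma^{-1}$, the trace-vanishing fact that the paper leaves implicit.
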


\begin{proof}
{\it (i)} The above proof of symmetry of $\tau_A$ contains even a proof of this statement. 
\\[1ex]
{\it (ii)} The ``if'' part holds trivially true. If $x_{ij}y_{ij} \not= 0$, then
$x_{ij}y_{ij} \in k1 \setminus \{0\}$ implies that $\tau_A(x_{ij},y_{ij}) \not= 0$.
\end{proof}

\begin{pro} \label{orthogonal supplement}
For every division algebra $A \in \hat{\mathscr{C}}(k)$ with Kleinian pair $(\alpha,\beta)$, the orthogonal supplement of $E_\beta(1)$ with regard to $\tau_A$ is 
\[ E_\beta(1)^\perp = E_\beta(-1). \]
\end{pro}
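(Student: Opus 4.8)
The plan is to read off both $E_\beta(1)$ and $E_\beta(-1)$ from the ${\rm V}$-grading $A = \bigoplus_{ij \in {\rm V}} A_{ij}$ supplied by Proposition \ref{V-grading}, and then to exploit the fact that this grading is $\tau_A$-orthogonal. Since $A_{ij} = E_\alpha((-1)^i) \cap E_\beta((-1)^j)$, the eigenvalue of $\beta$ on $A_{ij}$ is $(-1)^j$; hence $E_\beta(1) = A_{00} \oplus A_{10}$ collects the two graded pieces with $j = 0$, while $E_\beta(-1) = A_{01} \oplus A_{11}$ collects the two with $j = 1$. In particular both eigenspaces are $2$-dimensional, because each $A_{ij}$ is $1$-dimensional.

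First I would establish the inclusion $E_\beta(-1) \subseteq E_\beta(1)^\perp$. Taking $x \in E_\beta(-1)$, written as $x = x_{01} + x_{11}$ with $x_{01} \in A_{01}$ and $x_{11} \in A_{11}$, and $y \in E_\beta(1)$, written as $y = y_{00} + y_{10}$, every index pairing occurring when $\tau_A(x,y)$ is expanded by bilinearity has the shape $(ij, mn)$ with $j = 1$ and $n = 0$, so that $ij \not= mn$. Lemma \ref{orthogonality} (i) then annihilates each of the four terms, giving $\tau_A(x,y) = 0$ and hence the claimed inclusion.

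To promote this inclusion to an equality, I would invoke non-degeneracy. By Proposition \ref{trace bilinear form} (i), $\tau_A$ is a non-degenerate symmetric bilinear form on the $4$-dimensional space $A$, so every subspace $W$ satisfies $\dim_k W^\perp = 4 - \dim_k W$. Applied to $W = E_\beta(1)$ this yields $\dim_k E_\beta(1)^\perp = 4 - 2 = 2 = \dim_k E_\beta(-1)$, and together with the inclusion already shown this forces $E_\beta(1)^\perp = E_\beta(-1)$.

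I do not expect a serious obstacle: the whole argument rests on the orthogonality of the ${\rm V}$-grading, which is exactly Lemma \ref{orthogonality} (i). The only points demanding care are the bookkeeping of which graded components assemble into $E_\beta(\pm 1)$ and the passage from a one-sided containment to an identity via the dimension count. One could also bypass dimensions entirely: expanding an arbitrary $z \in E_\beta(1)^\perp$ in the grading and pairing it against bases of $A_{00}$ and $A_{10}$, Lemma \ref{orthogonality} (i) kills the cross terms while the non-degeneracy of $\tau_A$ on each $1$-dimensional $A_{ij}$ recorded in Lemma \ref{orthogonality} (ii) forces the $A_{00}$- and $A_{10}$-components of $z$ to vanish, placing $z$ in $E_\beta(-1)$ directly.
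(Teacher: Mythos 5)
Your proposal is correct and follows essentially the same route as the paper: both identify $E_\beta(1) = A_{00} \oplus A_{10}$ and $E_\beta(-1) = A_{01} \oplus A_{11}$ from the ${\rm V}$-grading of Proposition \ref{V-grading} and then conclude via Lemma \ref{orthogonality}, which the paper leaves as a ``straightforward consequence.'' You merely make that consequence explicit, finishing the reverse inclusion by non-degeneracy of $\tau_A$ (Proposition \ref{trace bilinear form} (i)) and a dimension count, while your closing alternative via Lemma \ref{orthogonality} (ii) is exactly the direct argument the paper has in mind.
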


\begin{proof}
Let $A = \bigoplus_{ij \in {\rm V}} A_{ij}$ be the {\rm V}-grading associated with $(\alpha,\beta)$. Then $E_\beta(1) = A_{00} \oplus A_{10}$ and $E_\beta(-1) = A_{01} \oplus A_{11}$. The claimed identity 
is now a straightforward consequence of Lemma \ref{orthogonality}.
\end{proof}

\subsection{Reduction of objects in $\mathscr{C}(k)$}

Let $A$ be a division algebra in $\mathscr{C}(k)$. We say that a subfield $\ell \subset A$ is {\it Kleinian} if there exists a Kleinian pair $(\alpha,\beta)$ for $A$, such that 
$\ell = E_\beta(1) \subset N_r(A)$. Since $E_\beta(1) = A_{00} \oplus A_{10} = k1_A \oplus A_{10}$ (Proposition \ref{V-grading}), every Kleinian subfield of $\ell \subset A$ is a quadratic extension of $k$, such 
that $k \subset \ell \subset N_r(A)$.

\begin{theo} \label{reduction of objects}
For every division algebra $A \in \mathscr{C}(k)$, the following holds true.
\\[1ex]
(i) There exists a Kleinian subfield $\ell \subset A$. If $A$ is not associative, then $\ell = N_r(A)$ is the unique Kleinian subfield of $A$.
\\[1ex]
(ii) For every Kleinian subfield $\ell \subset A$, there exists an $\ell$-admissible triple $\underline{c} \in C_\ell$, such that $A(\ell,\underline{c})\ \tilde{\to}\ A$.
\end{theo}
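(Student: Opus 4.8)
Write $N = N_r(A)$ and recall from Lemma \ref{nucleus} that $N$ is a skew field with $\dim_k N \in \{2,4\}$, that $N$ is preserved by every automorphism of $A$, and that $A$ is a right $N$-vector space under its own multiplication. I would attack both parts through the $\mathrm V$-grading $A = \bigoplus_{ij}A_{ij}$ attached by Proposition \ref{V-grading} to a Kleinian pair $(\alpha,\beta)$, using that $A_{00} = k1_A$ and $\dim_k A_{ij} = 1$ for all $ij$, and that each of the three involutions $\alpha,\beta,\alpha\beta$ fixes a two-dimensional subalgebra, namely $E_\alpha(1)=A_{00}\oplus A_{01}$, $E_\beta(1)=A_{00}\oplus A_{10}$ and $E_{\alpha\beta}(1)=A_{00}\oplus A_{11}$.

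For (i) I would start from any Kleinian pair $(\alpha,\beta)$ for $A$. Since $N$ is $\langle\alpha,\beta\rangle$-invariant and contains $A_{00}$, and the four homogeneous components are pairwise non-isomorphic simple $k\langle\alpha,\beta\rangle$-modules, $N$ is a sum of homogeneous components. If $A$ is associative then $N=A$, and for any Kleinian pair $E_\beta(1)$ is a two-dimensional unital subalgebra of the division algebra $A$, hence a quadratic subfield contained in $N_r(A)=A$; this is a Kleinian subfield. If $A$ is not associative then $\dim_k N=2$ by Proposition \ref{decomposition}, so $N=A_{00}\oplus A_{pq}$ for exactly one non-zero index $pq$, i.e. $N\in\{E_\alpha(1),E_\beta(1),E_{\alpha\beta}(1)\}$. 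Exploiting the $\mathrm{S}_3$-symmetry of Klein's four-group on its three involutions, I would relabel the generators to a Kleinian pair $(\alpha',\beta')$ with $E_{\beta'}(1)=N\subset N_r(A)$, exhibiting $N$ as a Kleinian subfield. Uniqueness in the non-associative case is then immediate: any Kleinian subfield is a two-dimensional subspace of $N_r(A)=N$, hence equals $N$.

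For (ii) I would fix a Kleinian pair $(\alpha,\beta)$ with $\ell=E_\beta(1)\subset N_r(A)$. First, $\alpha$ restricts on $\ell=A_{00}\oplus A_{10}$ to the identity on $k1$ and to $-\mathrm{id}$ on $A_{10}=\mathrm{Im}(\ell)$, so $\alpha|_\ell$ is the non-trivial Galois automorphism $\sigma=\overline{(\cdot)}$ of $\ell/k$. Since $\ell\subset N_r(A)$, every $L_u$ is right $\ell$-linear, so the entire multiplication is encoded by the injective $k$-linear map $u\mapsto L_u\in \mathrm{End}_\ell(A)\cong \ell^{2\times 2}$ relative to a right $\ell$-basis. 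I would pick a non-zero $j\in A_{01}$; then $j\ell=E_\beta(-1)=E_\beta(1)^\perp$ (Proposition \ref{orthogonal supplement}), so $(1_A,j)$ is a right $\ell$-basis and I set $\Phi\colon A(\ell,\underline c)\to A$, $\Phi$ sending the coordinate column with entries $x,y$ to $1_A x+jy$, extended right $\ell$-linearly. Now the grading forces every relevant product into a prescribed component: $j^2\in A_{00}$, so $j^2=\gamma 1_A$; the products $ij$ and $ji$ lie in $A_{11}$, so $ij=s\cdot ji$ with $s\in k$; and $(ji)j\in A_{10}=ki$, so $(ji)j=ti$ (here $i\in\mathrm{Im}(\ell)\setminus\{0\}$). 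These placements make the matrices of $L_{1_A},L_i,L_j,L_{ji}$ in the basis $(1_A,j)$ acquire exactly the off-diagonal zero-patterns and shapes produced by the construction $\mathscr F_\ell$. Putting $c_1=\tfrac{1-s}{2}$, $c_2=\tfrac{\gamma+t}{2}$, $c_3=\tfrac{\gamma-t}{2}$ (legitimate as $\mathrm{char}(k)\neq 2$) makes all four matrices agree with those of $\mathscr F_\ell(\underline c)$; since $u\mapsto L_u$ is $k$-linear and injective and $(1_A,j)$ is a basis, this agreement is precisely the statement that $\Phi$ is an algebra morphism, hence an isomorphism. Finally, as $A(\ell,\underline c)\simeq A$ is a division algebra, Proposition \ref{constructed algebras} (iv) gives $\underline c\in C_\ell$.

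I expect the main obstacle to be the bookkeeping in (i) that identifies $N_r(A)$ with one of the three eigen-subfields $E_\gamma(1)$ via the submodule structure, together with the relabeling that moves it into the $\beta$-fixed slot; everything else, including the structure-constant matching in (ii), is forced once $\alpha|_\ell=\sigma$ and the homogeneous placement of $j^2,\ ij,\ ji,\ (ji)j$ have been recorded.
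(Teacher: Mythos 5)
Your proposal is correct and in substance follows the paper's proof. Part (ii) is the paper's argument in matrix clothing: you choose the same right $\ell$-basis $(1_A,j)$ with $j \in A_{01}$ (the paper's $v$), read off the same structure constants from the graded placements $j^2=\gamma 1_A$, $ij=s\,ji$, $(ji)j=ti$ --- these are exactly the paper's $\zeta_2,\zeta_1,\zeta_3$, which it extracts instead via the operators $L_v^{-1}R_v$ and $R_vL_v$ --- and your $c_1=\frac{1-s}{2}$, $c_2=\frac{\gamma+t}{2}$, $c_3=\frac{\gamma-t}{2}$ and map $\Phi$ coincide with the paper's $c_1,c_2,c_3$ and $\theta$. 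Your multiplicativity check (matching the $\ell^{2\times 2}$-matrices of $L_{\Phi(a)}$ against those of $\mathscr{F}_\ell(\underline{c})$ on the $k$-basis $1_A,i,j,ji$) is sound, but be aware that the operative point is right $\ell$-linearity of $\Phi$ and of every left multiplication (both consequences of $\ell \subset N_r(A)$) together with $k$-linearity of $a \mapsto L_a$, not the injectivity of $u \mapsto L_u$ that you invoke; the paper's version of this step is the direct expansion using its identities (3) and (4). The one genuine divergence is in (i), non-associative case: the paper produces the pair by noting that ${\rm Aut}(N_r(A))={\rm Gal}(N_r(A)/k)$ has order $2$, so the composite ${\rm G} \to {\rm Aut}(N_r(A))$ has nontrivial kernel, yielding $\beta$ with $\beta|_{N_r(A)}=\I$ and then $E_\beta(1)=N_r(A)$ by dimension count; you instead observe that $N_r(A)$ is an $\langle \alpha,\beta\rangle$-invariant subspace of the multiplicity-free $k{\rm G}$-module $A$, hence a sum of homogeneous components containing $A_{00}$, hence one of the three eigen-subfields $E_\alpha(1), E_\beta(1), E_{\alpha\beta}(1)$, after which a permutation of the involutions puts it in the $\beta$-slot. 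Both are valid: the paper's kernel argument is marginally more economical, since it locates the right pair before invoking the grading, while yours yields the slightly sharper statement that for an arbitrary Kleinian pair the nucleus is literally one of its three eigen-subfields, so no new pair need be manufactured, only the generators relabeled.
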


\begin{proof}
{\it (i)} If $A$ is associative, choose any Kleinian pair $(\alpha,\beta)$ for $A$, and set $\ell = E_\beta(1)$. Then $\ell = A_{00} \oplus A_{10} = k1_A \oplus A_{10}$ (Proposition \ref{V-grading}) shows that 
$\ell$ is a 2-dimensional unital associative subalgebra of the division algebra $A$, contained in $A = N_r(A)$, and hence is a Kleinian subfield of $A$.

If $A$ is not associative, then we choose an injective group homomorphism $\rho: {\rm G} \hookrightarrow {\rm Aut}(A)$. Every automorphism $\varphi$ of $A$ induces an automorphism $\varphi_N$ of $N_r(A)$. 
This assignment is a group homomorphism 
\[ \nu: {\rm Aut}(A) \to {\rm Aut}(N_r(A)),\ \nu(\varphi) = \varphi_N. \] 
Due to Lemma \ref{nucleus} (iii) and Proposition \ref{decomposition}, $k \subset N_r(A)$ 
is a quadratic extension, and hence ${\rm Aut}(N_r(A)) = {\rm Gal}(N_r(A)/k)$ has order 2. The composed group homomorphism $\nu\rho: {\rm G} \to {\rm Aut}(N_r(A))$ is therefore not injective. 
Choose $b \in {\rm ker}(\nu\rho) \setminus \{e\}$ and $a \in {\rm G} \setminus \{e,b\}$. Then $(\alpha,\beta) = (\rho(a),\rho(b))$ is a Kleinian pair for $A$, and $\beta_N = \nu\rho(b) = \I_{N_r(A)}$
shows, together with\vspace{0,1cm} $\dim_k(E_\beta(1)) = 2 = \dim_k(N_r(A))$, that $E_\beta(1) = N_r(A)$. From this information we conclude with Proposition \ref{V-grading}, as in the associative case 
above, that $\ell = E_\beta(1)$ is a Kleinian subfield of $A$.  

If $A$ is not associative and $\ell \subset A$ is any Kleinian subfield, then the filtration $k \subset \ell \subset N_r(A)$, combined with $\dim_k(\ell) = 2 = \dim_k(N_r(A))$, yields $\ell = N_r(A)$. 
\\[1ex]
{\it (ii)} Given any Kleinian subfield $\ell \subset A$, there exists a Kleinian pair $(\alpha,\beta)$ for $A$, such that $\ell = E_\beta(1) \subset N_r(A)$.
Let $A = \bigoplus_{ij \in {\rm V}} A_{ij}$ be the {\rm V}-grading associated with $(\alpha,\beta)$. Then $\ell = A_{00} \oplus A_{10}$, $A_{00} = k1_A$ and $A_{10} = {\rm Im}(\ell)$. 
Choose $u \in A_{10} \setminus \{0\}$ and $v \in A_{01} \setminus \{0\}$. Then $\ell = k1_A \oplus ku,\ \overline{u} = -u$, and 
$A = \ell \oplus v\ell$.\vspace{0,1cm} 

The $k$-linear bijection $\varphi = L_v^{-1}R_v \in {\rm GL}(A)$ satisfies
\[ \varphi(1_A) = L_v^{-1}R_v(1_A) = L_v^{-1}L_v(1_A) = 1_A. \]
Since $uv \in A_{10}A_{01} = A_{11} = vA_{10}$, it also satisfies  
\[ \varphi(u) = L_v^{-1}R_v(u) = L_v^{-1}(uv) \in L_v^{-1}L_vA_{10} = A_{10}, \]
whence $\varphi(u) = \zeta_1 u$ for some $\zeta_1 \in k^\ast$. Thus, $\varphi \in {\rm GL}(A)$ induces a $k$-linear bijection $\varphi_\ell \in {\rm GL}(\ell)$. With $c_1 = \frac{1-\zeta_1}{2}$, 
the identity
\[ \varphi_\ell(x) = (1-c_1)x + c_1\overline{x} \]
holds for all $x \in \{1_A,u\}$, and hence for all $x \in \ell$. Accordingly,
\[ x(vz) = (xv)z = (L_vL_v^{-1}R_v(x))z = (v\varphi_\ell(x))z = v(\varphi_\ell(x)z) \]
yields
\begin{eqnarray}
x(vz) & = & v(((1-c_1)x + c_1\overline{x})z)
\end{eqnarray}
for all $x,z \in \ell$.

In the same vein, the $k$-linear bijection $\psi = R_vL_v \in {\rm GL}(A)$ satisfies $\psi(1_A) = (v1_A)v = vv = \zeta_21_A$ for some $\zeta_2 \in k^\ast$, and 
$\psi(u) = (vu)v = \zeta_3u$ for some $\zeta_3 \in k^\ast$, since $(vu)v \in (A_{01}A_{10})A_{01} = A_{11}A_{01} = A_{10} = ku$. Thus, $\psi \in {\rm GL}(A)$ induces a $k$-linear bijection 
$\psi_\ell \in {\rm GL}(\ell)$. With $c_2 = \frac{\zeta_2 + \zeta_3}{2}$ and $c_3 = \frac{\zeta_2 - \zeta_3}{2}$, 
the identity
\[ \psi_\ell(y) = c_2y + c_3\overline{y} \]
holds for all $y \in \{1_A,u\}$, and hence for all $y \in \ell$. Accordingly,
\begin{eqnarray} 
(vy)(vz) = ((vy)v)z = \psi_\ell(y)z = (c_2y + c_3\overline{y})z 
\end{eqnarray}
holds for all $y,z \in \ell$.

Summarizing, we have found a triple $\underline{c} = (c_1,c_2,c_3) \in k^3$, that satisfies (3) and (4) for all $x,y,z \in \ell$. We claim that the map 
\[ \theta: A(\ell,\underline{c})\ \tilde{\to}\ A,\ \theta \left( \begin{array}{c} x\\y \end{array} \right) = x+vy \] 
is an isomorphism of $k$-algebras. Indeed, $\theta$ is $k$-linear by definition, and bijective since $\ell \oplus v\ell = A$. Using (3) and (4), we see that
\begin{eqnarray*} 
\theta \left( \left( \begin{array}{c} x\\y \end{array} \right) \cdot \left( \begin{array}{c} w\\z \end{array} \right) \right) & = &  
\theta \left( \begin{array}{c} xw + (c_2y + c_3\overline{y})z \\ yw + ((1-c_1)x + c_1\overline{x})z \end{array} \right) 
\\[1ex]
 & = & xw + (c_2y + c_3\overline{y})z + v(yw + ((1-c_1)x + c_1\overline{x})z)
\\
 & = & xw + (vy)(vz) + (vy)w + x(vz)
\\
 & = & (x+vy)(w+vz) 
\\
 & = & \theta \left( \begin{array}{c} x\\y \end{array} \right) \theta \left( \begin{array}{c} w\\z \end{array} \right)
\end{eqnarray*}
holds for all $x,y,w,z \in \ell$. The algebra isomorphism $\theta: A(\ell,\underline{c})\ \tilde{\to}\ A$ is thus established. It follows that $A(\ell,\underline{c})$ is a division algebra, and hence that
$\underline{c}$ is $\ell$-admissible (Proposition \ref{constructed algebras} (iv)).
\end{proof}

\begin{cor} \label{Hurwitz division algebras}
For every field $k$ of characteristic not 2, the class $\mathscr{S}(k)$ of all 4-dimensional central skew fields over $k$ that admit a Kleinian pair coincides with the the class $\mathscr{HD}_4(k)$ of all 
4-dimensional Hurwitz division algebras over $k$.
\end{cor}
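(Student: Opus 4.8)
The inclusion $\mathscr{S}(k) \supset \mathscr{HD}_4(k)$ was already established in Proposition \ref{decomposition}, so the plan is to prove only the reverse inclusion $\mathscr{S}(k) \subset \mathscr{HD}_4(k)$. First I would take an arbitrary $A \in \mathscr{S}(k)$ and apply Theorem \ref{reduction of objects} to obtain a quadratic extension $k \subset \ell$ and an $\ell$-admissible triple $\underline{c} \in C_\ell$ with $A(\ell,\underline{c})\ \tilde{\to}\ A$. Since isomorphic algebras lie in the same block of the decomposition of $\mathscr{C}(k)$, Proposition \ref{types of admissible triples} (ii) forces $(c_1,c_2) = (1,0)$, so that $A\ \simeq\ A(\ell,(1,0,c_3))$ for some $c_3 \in k$, and the admissibility condition $q_{\underline{c}}(0,1) \neq 0$ yields $c_3 \neq 0$. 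It then suffices to equip $A(\ell,(1,0,c_3))$ with a non-degenerate multiplicative quadratic form.

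The key observation is that the condition $(c_1,c_2) = (1,0)$, which characterizes the block $\mathscr{S}$, is precisely what makes the function $q_{\underline{c}}$ $k$-valued: indeed $q_{\underline{c}}(x,y) = x\overline{x} - c_3\,y\overline{y} = n_{\ell/k}(x) - c_3\, n_{\ell/k}(y) \in k$ for all $(x,y) \in \ell^2$. Hence $q = q_{\underline{c}}$ is a genuine quadratic form $q: A \to k$ on the $k$-space $A = A(\ell,(1,0,c_3))$, with $q(1_A) = 1$. Writing $\ell = k \oplus ki$ with $i \in {\rm Im}(\ell)$ and $i^2 = d \in k^\ast$, one sees that $q$ diagonalizes as $\langle 1, -d, -c_3, c_3 d\rangle$, so its non-degeneracy follows at once from $d \neq 0$ and $c_3 \neq 0$; alternatively, $q$ is anisotropic because $\underline{c}$ is $\ell$-admissible, and in characteristic not 2 every anisotropic quadratic form is non-degenerate.

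Finally I would establish multiplicativity of $q$ by exploiting associativity rather than a direct computation. Since $A \in \mathscr{S}$ is a skew field, $A$ is associative, so $L_{ab} = L_a L_b$ for all $a,b \in A$; moreover, by the identity (1) in the proof of Proposition \ref{constructed algebras}, each $L_a$ is a right-$\ell$-linear endomorphism of the $2$-dimensional right $\ell$-vector space $A$, with $\ell$-determinant $\det{}_\ell(L_a) = q_{\underline{c}}(a) = q(a)$ (Proposition \ref{constructed algebras} (iv)). Multiplicativity of the determinant over $\ell$ then gives $q(ab) = \det{}_\ell(L_{ab}) = \det{}_\ell(L_a)\det{}_\ell(L_b) = q(a)q(b)$. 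Thus $A$ is a $4$-dimensional unital division algebra carrying a non-degenerate multiplicative quadratic form, i.e.~$A \in \mathscr{HD}_4(k)$, which completes the argument. I expect the only genuinely delicate point to be the recognition that passing into the block $\mathscr{S}$ is exactly what renders $q_{\underline{c}}$ $k$-valued and hence a bona fide norm; once this is noticed, both non-degeneracy and multiplicativity come essentially for free.
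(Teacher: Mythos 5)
Your proposal is correct and follows essentially the same route as the paper: reduce via Theorem \ref{reduction of objects} and Proposition \ref{types of admissible triples} to $A \simeq A(\ell,(1,0,c_3))$, note that $q_{\underline{c}}$ is then a $k$-valued anisotropic (hence non-degenerate) quadratic form, and obtain multiplicativity from $q_{\underline{c}}(a) = \det(L_a)$ together with $\det(L_{a\cdot b}) = \det(L_aL_b) = \det(L_a)\det(L_b)$. Your added touches --- the explicit diagonalization $\langle 1,-d,-c_3,c_3d\rangle$ and the explicit appeal to associativity behind $L_{ab}=L_aL_b$ --- merely make precise what the paper leaves implicit.
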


\begin{proof}
We already know that $\mathscr{S}(k) \supset \mathscr{HD}_4(k)$ (Proposition \ref{decomposition}). Conversely, for every $A \in \mathscr{S}(k)$ there exists a quadratic extension $k \subset \ell$ and an 
$\ell$-admissible triple $\underline{c} = (1,0,c_3) \in C_\ell^\mathscr{S}$, such that $A(\ell,\underline{c})\ \tilde{\to}\ A$\vspace{0,1cm} (\mbox{Theorem \ref{reduction of objects}} and 
Proposition \ref{types of admissible triples}). Identifying $(x,y) \in \ell^2$ with $\left( \begin{array}{c} x\\y \end{array}\right) \in \ell^{2 \times 1}$, the function $q_{\underline{c}}: \ell^2 \to \ell$ turns 
into a quadratic form
\[ q_{\underline{c}}: A(\ell,\underline{c}) \to k,\ q_{\underline{c}} \left( \begin{array}{c} x\\y \end{array}\right) = x\overline{x} - c_3y\overline{y}, \]
which is non-degenerate because it is anisotropic, and multiplicative because 
\[ q_{\underline{c}}(a \cdot b) = \det(L_{a \cdot b}) = \det(L_a L_b) = \det(L_a)\det(L_b) = q_{\underline{c}}(a)q_{\underline{c}}(b) \]
holds for all $a, b \in A(\ell,\underline{c})$. Accordingly, $A(\ell,\underline{c}) \in \mathscr{HD}_4(k)$. As $A\ \tilde{\to}\ A(\ell,\underline{c})$, it follows that $A \in \mathscr{HD}_4(k)$.
\end{proof}

\subsection{Reduction of morphisms in $\mathscr{C}(k)$}

Recall that all morphisms in $\mathscr{C}(k)$ are isomorphisms. We use the notation introduced in Subsection 3.3.

\begin{theo} \label{reduction of morphisms}
For every quadratic extension $k \subset \ell$ in characteristic \mbox{not 2} and for all $(\underline{c}, \underline{d}) \in C_\ell \times C_\ell$, the division
algebras $A(\ell,\underline{c})$ and $A(\ell,\underline{d})$ are isomorphic if and only if the subset $\ell^\ast(\underline{c},\underline{d}) \subset \ell^\ast$ is not empty.
\end{theo}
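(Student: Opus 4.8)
The plan is to dispatch the ``if'' direction at once and to spend the real effort on ``only if''. If $a \in \ell^\ast(\underline{c},\underline{d})$, then Lemma \ref{constructed morphisms}(ii) already produces the morphism $\varphi_a: A(\ell,\underline{c}) \to A(\ell,\underline{d})$ in $\mathscr{C}(k)$, and any morphism between $4$-dimensional division algebras is an isomorphism; hence the algebras are isomorphic. For the converse I would start from an arbitrary isomorphism $\theta: A \to B$, where $A = A(\ell,\underline{c})$ and $B = A(\ell,\underline{d})$, and show that after post-composing with suitable automorphisms of $B$ it becomes one of the standard maps $\varphi_a$ of Lemma \ref{constructed morphisms}; reading off the defining equations of $\varphi_a$ then yields an element $a \in \ell^\ast(\underline{c},\underline{d})$, so that this set is non-empty.

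The first and, I expect, hardest step is to arrange that $\theta$ carries the distinguished subfield $1_A\ell$ onto $1_B\ell$, and here the three blocks of Proposition \ref{types of admissible triples} demand separate arguments. If $A \in \mathscr{N}$, then $1_A\ell = N_r(A)$ is the unique Kleinian subfield (Theorem \ref{reduction of objects}(i)), and since $\theta$ preserves the right nucleus we get $\theta(1_A\ell) = \theta(N_r(A)) = N_r(B) = 1_B\ell$ for free. If $A \in \mathscr{K}$, then $B$ is a $\mathrm{V}$-Galois field whose three quadratic subfields are pairwise non-isomorphic (two $k$-isomorphic quadratic subfields of one field coincide), so the subfield $\theta(1_A\ell) \simeq \ell$ is forced to equal $1_B\ell$. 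If $A \in \mathscr{S}$, then $B$ is a central skew field and $1_B\ell$, $\theta(1_A\ell)$ are two $k$-embedded copies of $\ell$; by the Skolem--Noether theorem they are conjugate, so replacing $\theta$ by $\kappa_u \circ \theta$ for a suitable $u \in B^\ast$ achieves $\theta(1_A\ell) = 1_B\ell$. In every case the induced map $1_A\ell \to 1_B\ell$ is, under the canonical identifications with $\ell$, an element of $\mathrm{Gal}(\ell/k) = \{\mathrm{id},\sigma\}$; post-composing with the automorphism $\beta$ of $B$ (Proposition \ref{constructed algebras}(iii)), which acts as $\sigma$ on $1_B\ell$, whenever necessary, I may assume that $\theta$ restricts to the identity on $\ell$.

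With this normalization the rest is driven by the trace bilinear form. From Lemma \ref{orthogonality}(i) and non-degeneracy of $\tau_A$ (Proposition \ref{trace bilinear form}(i)) one sees that $1_A\ell = A_{00} \oplus A_{01}$ and $j_A\ell = A_{10} \oplus A_{11}$ are orthogonal complements, so $j_A\ell = (1_A\ell)^\perp$, and likewise in $B$. Since $\theta$ is an isometry (Proposition \ref{trace bilinear form}(ii)) sending $1_A\ell$ onto $1_B\ell$, it must send $j_A\ell = (1_A\ell)^\perp$ onto $(1_B\ell)^\perp = j_B\ell$. Moreover $\theta$ is right $\ell$-linear, because the right $\ell$-action is $a \mapsto a\cdot 1_A l$ and $\theta(a\cdot 1_A l) = \theta(a)\cdot 1_B l$ once $\theta$ fixes $\ell$; hence $\theta(j_A) = j_B a$ for some $a \in \ell^\ast$, and right $\ell$-linearity together with $\theta|_{1_A\ell} = \mathrm{id}$ forces $\theta\binom{x}{y} = \binom{x}{ay}$, i.e.\ $\theta = \varphi_a$ (the $\beta$-twisted case produces $\psi_a$ instead, which serves equally well). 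Finally I would impose multiplicativity of $\varphi_a$: comparing the two coordinates of $\varphi_a(\mathbf{u}\cdot\mathbf{u}') = \varphi_a(\mathbf{u})\cdot\varphi_a(\mathbf{u}')$ yields $c_1 = d_1$ from the second coordinate and $c_2 = d_2 a^2$, $c_3 = d_3 a\overline{a}$ from the first, which is precisely the assertion $a \in \ell^\ast(\underline{c},\underline{d})$. The conceptual gain over a brute-force analysis is that the trace form replaces the opaque bookkeeping of how $\theta$ permutes the homogeneous components; the only genuinely delicate point is the subfield-matching step, where the non-associative, field, and skew-field blocks require the three distinct arguments above.
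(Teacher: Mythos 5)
Your proof is correct and follows essentially the same route as the paper's: the same three block-wise arguments to match the subfield $1_A\ell$ (right nucleus for $\mathscr{N}$, root-counting in a field for $\mathscr{K}$, Skolem--Noether for $\mathscr{S}$), the same normalization by the conjugation automorphism $\beta = \psi_1$, and the same use of the trace form's orthogonality on the ${\rm V}$-grading to force $\theta(j) = ja$ before reading off $c_1 = d_1$, $c_2 = a^2 d_2$, $c_3 = a\overline{a}d_3$. The only deviations are cosmetic (post- versus pre-composition, and phrasing the $\mathscr{K}$ case via coincidence of $k$-isomorphic quadratic subfields rather than $\mu(u) = \pm u$, which is the same fact).
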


\begin{proof}
Let $k \subset \ell$ and $(\underline{c},\underline{d}) \in C_\ell \times C_\ell$ be given. If $\ell^\ast(\underline{c},\underline{d}) \not= \emptyset$, then we can choose 
$a \in \ell^\ast(\underline{c},\underline{d})$ and construct the isomorphism $\varphi_a: A(\ell,\underline{c}) \to A(\ell,\underline{d})$, defined in Lemma \ref{constructed morphisms} (ii).

Conversely, suppose that an isomorphism \mbox{$\mu: A(\ell,\underline{c}) \to A(\ell,\underline{d})$ exists.} Towards a proof of non-emptiness of $\ell^\ast(\underline{c},\underline{d})$ we proceed in 
four steps, wherein we denote the common unity $1_{A(\ell,\underline{c})} = 1_{A(\ell,\underline{d})} = \left( \begin{array}{c} 1\\0 \end{array} \right)$ by $1_A$, and set 
$j = \left( \begin{array}{c} 0\\1 \end{array} \right)$.
\\[1ex]
{\it Step 1. There exists an isomorphism $\mu_1: A(\ell,\underline{c}) \to A(\ell,\underline{d})$, such that $\mu_1(1_A\ell) = 1_A\ell$.} 
\\[1ex]
{\it Proof of Step 1.} The existence of $\mu$ implies with Proposition \ref{decomposition}, that
\[ (\underline{c},\underline{d}) \in \left( C_\ell^{\mathscr{N}} \times C_\ell^{\mathscr{N}} \right) \sqcup \left( C_\ell^{\mathscr{S}} \times C_\ell^{\mathscr{S}} \right) \sqcup 
\left( C_\ell^{\mathscr{K}} \times C_\ell^{\mathscr{K}} \right). \]
Suppose $(\underline{c},\underline{d}) \in C_\ell^{\mathscr{N}} \times C_\ell^{\mathscr{N}}$. The isomorphism $\mu$ induces an isomorphism of right nuclei, and 
$N_r(A(\ell,\underline{c})) = 1_A\ell = N_r(A(\ell,\underline{d}))$ by Proposition \ref{constructed algebras} (ii) and Proposition \ref{decomposition}, so $\mu(1_A\ell) = 1_A\ell$ follows.
\\[1ex]
Suppose $(\underline{c},\underline{d}) \in C_\ell^{\mathscr{S}} \times C_\ell^{\mathscr{S}}$. The isomorphism $\mu$ induces an isomorphism $\mu_\iota: 1_A\ell \to \mu(1_A\ell)$ of subfields of $A(\ell,\underline{d})$.
Since $A(\ell,\underline{d})$ is a central skew field over $k$, \cite[Theorem 7.21]{Re75} asserts that $\mu_\iota$ extends to an automorphism $\vartheta$ of $A(\ell,\underline{d})$. Now
$\mu_1 = \vartheta^{-1}\mu$ will do.
\\[1ex]
Suppose $(\underline{c},\underline{d}) \in C_\ell^{\mathscr{K}} \times C_\ell^{\mathscr{K}}$. Choose $u \in {\rm Im}(1_A \ell) \setminus \{0\}$. Then $u^2 = 1_A \lambda$ for some $\lambda \in k \setminus k_{sq}$. Now
$(\mu(u))^2 = \mu(u^2) = \mu(1_A \lambda) = 1_A \lambda = u^2$ implies, since $A(\ell,\underline{d})$ is a field, that $\mu(u) = \pm u$. Because $1_A \ell = 1_A k \oplus uk$, it follows that 
$\mu(1_A \ell) = 1_A \ell$.   
\\[1ex]
{\it Step 2. There exists an isomorphism $\mu_2: A(\ell,\underline{c}) \to A(\ell,\underline{d})$ that fixes $1_A\ell$ elementwise.} 
\\[1ex]
{\it Proof of Step 2.} Let $\mu_1: A(\ell,\underline{c}) \to A(\ell,\underline{d})$ be an isomorphism as in Step 1. Then $\mu_1$ induces a Galois automorphism $\mu_\ell \in {\rm Gal}(\ell/k)$, 
and ${\rm Gal}(\ell/k) = \langle \sigma \rangle$ has order 2. If $\mu_\ell = \I_\ell$, then $\mu_2 = \mu_1$ will do. If $\mu_\ell = \sigma$, then $\mu_2 = \mu_1\psi_1$  will do, where
$\psi_1 \in {\rm Aut}(A(\ell,\underline{c}))$ is the automorphism defined in Lemma \ref{constructed morphisms} (ii).
\\[1ex]
{\it Step 3. If $\mu_2: A(\ell,\underline{c}) \to A(\ell,\underline{d})$ is an isomorphism that fixes $1_A\ell$ elementwise, then $\mu_2(j) = ja$ for some $a \in \ell^\ast$.}
\\[1ex]
{\it Proof of Step 3.} Let $\mu_2: A(\ell,\underline{c}) \to A(\ell,\underline{d})$ be an isomorphism that fixes $1_A\ell$ elementwise, and let $(\beta,\alpha)$ be the Kleinian pair for 
$A(\ell,\underline{c})$ and $A(\ell,\underline{d})$, defined in Proposition \ref{constructed algebras} (iii). Then, by Proposition \ref{orthogonal supplement}, the orthogonal supplement of $1_A\ell$ 
with regard to the trace bilinear form is 
\[ 1_A\ell^\perp = E_\alpha(1)^\perp = E_\alpha(-1) = j\ell \]
in both $A(\ell,\underline{c})$ and $A(\ell,\underline{d})$. We conclude with Proposition \ref{trace bilinear form} (ii) that 
\[ \tau_{A(\ell,\underline{d})}(1_Al,\mu_2(j)) = \tau_{A(\ell,\underline{d})}(\mu_2(1_Al),\mu_2(j)) = \tau_{A(\ell,\underline{c})}(1_Al,j) = 0 \]
holds for all $l \in \ell$. So $\mu_2(j) \in 1_A\ell^\perp = j\ell$, i.e.~$\mu_2(j) = ja$ for some $a \in \ell^\ast$.
\\[1ex]
{\it Step 4. If $\mu_2: A(\ell,\underline{c}) \to A(\ell,\underline{d})$ and $a \in \ell^\ast$ are as in Step 3, then $a \in \ell^\ast(\underline{c},\underline{d})$.} 
\\[1ex]
{\it Proof of Step 4.} As $\mu_2$ fixes $1_A\ell$ elementwise, identity (2) in the proof of Proposition \ref{constructed algebras} shows that $\mu_2$ is right $\ell$-linear. Thus, the system of equations
\[ \left\{ \begin{array}{ccc} 
\mu_2(1_Ax \cdot j) & = & 1_Ax \cdot ja \\
\mu_2(jy \cdot j) & = & \mu_2(jy) \cdot ja,
\end{array} \right. \]
valid for all $(x,y) \in \ell^2$, gives rise to the system of equations
\[ \left\{ \begin{array}{ccc} 
(1-c_1)x + c_1 \overline{x} & = & (1-d_1)x + d_1 \overline{x} 
\\[1ex]
c_2y + c_3 \overline{y} & = & a^2d_2y + a\overline{a}d_3 \overline{y},
\end{array} \right. \]
which, evaluated in $x = i \in {\rm Im}(\ell) \setminus \{0\}$ and $y \in \{ 1,i \}$, yields $c_1 = d_1$ and $(c_2, c_3) = (a^2d_2, a\overline{a}d_3)$, i.e.~$a \in \ell^\ast (\underline{c}, \underline{d})$. 
\end{proof}

\subsection{$\mathscr{L}$-coverings of $\mathscr{C}(k)$ and $\mathscr{N}(k), \mathscr{S}(k), \mathscr{K}(k)$}

For every division algebra $A \in \mathscr{C}(k)$, we denote by ${\cal N}(A)$ the set of all Kleinian subfields of $A$. Then ${\cal N}(A) \subset \mathscr{Q}(k)$, where $\mathscr{Q}(k)$ is the groupoid 
formed by all quadratic extensions of $k$ (Subsection 2.1). Let $\mathscr{L} \subset \mathscr{Q}(k)$ be a transversal for the set $\mathscr{Q}(k)/\hspace{-0,1cm}\simeq$ of all isoclasses in $\mathscr{Q}(k)$. 
For each $\ell \in \mathscr{L}$ we define the full subgroupoid $\mathscr{C}_\ell(k) \subset \mathscr{C}(k)$ by its object class
\[ \mathscr{C}_\ell(k) = \{ A \in \mathscr{C}(k)\ |\ \ell\ \tilde{\to}\ n\ \mbox{for\ some}\ n \in {\cal N}(A) \}, \]
and likewise we define the full subgroupoids $\mathscr{N}_\ell(k), \mathscr{S}_\ell(k)$ and $\mathscr{K}_\ell(k)$ respectively of $\mathscr{N}(k), \mathscr{S}(k)$ and $\mathscr{K}(k)$ by their object classes
\[ \mathscr{N}_\ell(k) = \mathscr{N}(k) \cap \mathscr{C}_\ell(k),\ \mathscr{S}_\ell(k) = \mathscr{S}(k) \cap \mathscr{C}_\ell(k)\ \mbox{and}\ \mathscr{K}_\ell(k) = \mathscr{K}(k) \cap \mathscr{C}_\ell(k). \]

\begin{pro} \label{L-coverings}
The groupoid $\mathscr{C}(k)$ and its blocks $\mathscr{N}(k), \mathscr{S}(k), \mathscr{K}(k)$ admit coverings of the form
\begin{eqnarray*} \mathscr{C}(k) & = & \bigcup_{\ell \in \mathscr{L}} \mathscr{C}_\ell(k) = \bigcup_{\ell \in \mathscr{L}} \left( \mathscr{N}_\ell(k) \amalg \mathscr{S}_\ell(k) \amalg \mathscr{K}_\ell(k) \right),\\ 
\mathscr{N}(k) & = & \coprod_{\ell \in \mathscr{L}} \mathscr{N}_\ell(k),\\
\mathscr{S}(k) & = & \bigcup_{\ell \in \mathscr{L}} \mathscr{S}_\ell(k),\\
\mathscr{K}(k) & = & \bigcup_{\ell \in \mathscr{L}} \mathscr{K}_\ell(k). 
\end{eqnarray*}
\end{pro}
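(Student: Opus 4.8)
The plan is to derive all four statements from a single dichotomy: the existence part and the uniqueness part of Theorem \ref{reduction of objects}(i), played off against the global decomposition $\mathscr{C}(k) = \mathscr{N}(k) \amalg \mathscr{S}(k) \amalg \mathscr{K}(k)$ of Proposition \ref{decomposition}. First I would settle the covering $\mathscr{C}(k) = \bigcup_{\ell \in \mathscr{L}} \mathscr{C}_\ell(k)$. The inclusion $\supseteq$ is immediate, since each $\mathscr{C}_\ell(k)$ is by definition a full subgroupoid of $\mathscr{C}(k)$. For $\subseteq$ I take any $A \in \mathscr{C}(k)$; by Theorem \ref{reduction of objects}(i) the set ${\cal N}(A)$ of Kleinian subfields is non-empty, so I choose $m \in {\cal N}(A)$ and, using that $\mathscr{L}$ is a transversal for the isoclasses of quadratic extensions, the unique $\ell \in \mathscr{L}$ with $\ell\ \tilde{\to}\ m$, giving $A \in \mathscr{C}_\ell(k)$. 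The internal decomposition $\mathscr{C}_\ell(k) = \mathscr{N}_\ell(k) \amalg \mathscr{S}_\ell(k) \amalg \mathscr{K}_\ell(k)$ is then obtained simply by intersecting the global decomposition of Proposition \ref{decomposition} with the full subgroupoid $\mathscr{C}_\ell(k)$: disjointness of the three object classes $\mathscr{N}_\ell(k) = \mathscr{N}(k) \cap \mathscr{C}_\ell(k)$, etc., and the vanishing of all cross-morphisms are both inherited verbatim from the global decomposition, because $\mathscr{C}_\ell(k)$ is full.

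The two covering identities for $\mathscr{S}(k)$ and $\mathscr{K}(k)$ then follow in one line each. Given $A \in \mathscr{S}(k) \subseteq \mathscr{C}(k)$ (respectively $A \in \mathscr{K}(k)$), the previous step places $A$ in some $\mathscr{C}_\ell(k)$, whence $A \in \mathscr{S}(k) \cap \mathscr{C}_\ell(k) = \mathscr{S}_\ell(k)$ (respectively $\mathscr{K}_\ell(k)$). I would stress that these are only coverings and not decompositions: a central skew field has many maximal, hence Kleinian, subfields, and a Galois extension with Galois group Klein's four-group has several quadratic subfields, so the same $A$ may well lie in distinct $\mathscr{S}_\ell(k)$ or $\mathscr{K}_\ell(k)$. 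This non-uniqueness is exactly what prevents disjointness, and explains why only the associative blocks yield mere unions.

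The crux is the sharper claim $\mathscr{N}(k) = \coprod_{\ell \in \mathscr{L}} \mathscr{N}_\ell(k)$, and here everything rests on the \emph{uniqueness} clause of Theorem \ref{reduction of objects}(i): a non-associative $A \in \mathscr{N}(k)$ has ${\cal N}(A) = \{N_r(A)\}$. The covering part is as before. For disjointness of object classes, if $A \in \mathscr{N}_\ell(k) \cap \mathscr{N}_{\ell'}(k)$ then both $\ell$ and $\ell'$ are isomorphic to the single Kleinian subfield $N_r(A)$, forcing $\ell\ \tilde{\to}\ \ell'$ and hence $\ell = \ell'$ since $\mathscr{L}$ is a transversal. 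For the absence of cross-morphisms between distinct cofactors, I would use that every morphism in the groupoid $\mathscr{N}(k)$ is an isomorphism $\varphi: A \to B$; any algebra isomorphism carries $N_r(A)$ onto $N_r(B)$, hence restricts to an isomorphism $N_r(A)\ \tilde{\to}\ N_r(B)$ of the respective unique Kleinian subfields. Thus $A \in \mathscr{N}_\ell(k)$ and $B \in \mathscr{N}_{\ell'}(k)$ again yield $\ell\ \tilde{\to}\ N_r(A)\ \tilde{\to}\ N_r(B)\ \tilde{\to}\ \ell'$ and therefore $\ell = \ell'$, so no morphism exists when $\ell \not= \ell'$.

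I expect the only point genuinely needing care to be this last verification, namely that an algebra isomorphism respects the right nucleus and so preserves the unique Kleinian subfield in the non-associative case; once that is combined with the uniqueness from Theorem \ref{reduction of objects}(i), all remaining work is bookkeeping against Propositions \ref{decomposition} and the definition of $\mathscr{L}$ as a transversal.
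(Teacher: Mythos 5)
Your proof is correct and takes essentially the same route as the paper's: existence of a Kleinian subfield (Theorem \ref{reduction of objects}(i)) together with the transversal property of $\mathscr{L}$ gives all four coverings, Proposition \ref{decomposition} gives the block decomposition of each $\mathscr{C}_\ell(k)$, and the uniqueness clause $N_r(A) = \ell$ for non-associative $A$, combined with the fact that any isomorphism restricts to an isomorphism $N_r(A)\ \tilde{\to}\ N_r(B)$, yields the coproduct statement for $\mathscr{N}(k)$ exactly as in the paper (which handles disjointness of object classes and absence of cross-morphisms in one stroke, since in a groupoid a cross-morphism is an isomorphism). Your supplementary remarks explaining why $\mathscr{S}(k)$ and $\mathscr{K}(k)$ admit only coverings, not decompositions, are accurate but not required for the statement.
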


\begin{proof}
Every $A \in \mathscr{C}(k)$ has a Kleinian subfield $n \subset A$ (Theorem \ref{reduction of objects} (i)), and $n\ \tilde{\to}\ \ell$ for some $\ell \in \mathscr{L}$, so $A \in  \mathscr{C}_\ell(k)$.
This proves the covering statement for $\mathscr{C}(k)$. Taking $A$ in $\mathscr{N}(k), \mathscr{S}(k)$ and $\mathscr{K}(k)$ respectively, we obtain the covering statements for 
$\mathscr{N}(k), \mathscr{S}(k)$ and $\mathscr{K}(k)$. For all $\ell \in \mathscr{L}$, the decomposition $\mathscr{C}_\ell(k) = \mathscr{N}_\ell(k) \amalg \mathscr{S}_\ell(k) \amalg \mathscr{K}_\ell(k)$ holds
by Proposition \ref{decomposition}.

Let $\ell, m \in \mathscr{L}$. If $A \in \mathscr{N}_\ell(k)$ and $B \in \mathscr{N}_m(k)$ are isomorphic objects, then Theorem \ref{reduction of objects} (i), second statement, yields
$\ell\ \tilde{\to}\ N_r(A)\ \tilde{\to}\ N_r(B)\ \tilde{\to}\ m$. This implies $\ell = m$, whence the decomposition statement for $\mathscr{N}(k)$ follows. 
\end{proof}

\noindent
The question of how to find a transversal $\mathscr{L}$ for $\mathscr{Q}(k)/\hspace{-0,1cm}\simeq$ arises naturally in the context of Proposition \ref{L-coverings}. It is answered on the level of 
the punctured quotient group $\left( k^\ast/k_{sq}^\ast \right)^\circ$, introduced in Subsection 2.1, by the following proposition, whose proof makes an exercise in basic algebra.

\begin{pro} \label{quadratic extensions}
For every field $k$ of characteristic not 2, the following statements hold true.
\\[1ex]
(i) If $a \in k^\ast \setminus k_{sq}^\ast$, then $k (\sqrt{a}~) \in \mathscr{Q}(k)$.
\\[1ex]
(ii) If $\ell \in \mathscr{Q}(k)$, then $\ell\ = k(\sqrt{a}~)$, for some $a \in k^\ast \setminus k_{sq}^\ast$.
\\[1ex]
(iii) For all $a, b \in k^\ast \setminus k_{sq}^\ast$, the coset identity $ak_{sq}^\ast = bk_{sq}^\ast$ holds if and 
only if $k(\sqrt{a}~)\ \tilde{\to}\ k(\sqrt{b}~)$.
\\[1ex]
(iv) A subset $L \subset k^\ast \setminus k_{sq}^\ast$ is a transversal for $\left( k^\ast/k_{sq}^\ast \right)^\circ$\vspace{0,1cm} if and only if the subset 
$\mathscr{L} = \left\{ \left. k(\sqrt{a}~) \right| a \in L \right\} \subset \mathscr{Q}(k)$ is a transversal for $\mathscr{Q}(k)/\hspace{-0,1cm}\simeq$. 
\\[1ex]
(v) The order of the quotient group $k^\ast/k_{sq}^\ast$ and the cardinality\vspace{0,1cm} of the set of isoclasses
$\mathscr{Q}(k)/\hspace{-0,1cm}\simeq$ satisfy the identity $\left| k^\ast/k_{sq}^\ast \right| - 1 = \left| \mathscr{Q}(k)/\hspace{-0,1cm}\simeq \right|$. 
\end{pro}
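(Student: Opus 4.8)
\noindent
The plan is to recognise (i)--(iii) as standard Kummer theory in characteristic not $2$ and then to deduce (iv) and (v) by purely formal counting. Throughout I write $k_{sq}^\ast = {\rm im}(s_k^\ast)$ for the subgroup of nonzero squares in $k^\ast$, so that for $a,b \in k^\ast$ the coset identity $ak_{sq}^\ast = bk_{sq}^\ast$ means precisely that $a/b$ is a square in $k^\ast$. The key point to establish is the bijection, induced by $a \mapsto k(\sqrt a)$, between the nontrivial cosets in $k^\ast/k_{sq}^\ast$ and the isoclasses $\mathscr{Q}(k)/\hspace{-0,1cm}\simeq$; once that is in place, (iv) and (v) are immediate.

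For (i) I would note that $a \notin k_{sq}^\ast$ makes $X^2 - a$ root-free over $k$, hence irreducible, so $k(\sqrt a) = k[X]/(X^2 - a)$ is a degree $2$ field extension, i.e.~$k(\sqrt a) \in \mathscr{Q}(k)$. For (ii) I would exploit the decomposition $\ell = k \oplus {\rm Im}(\ell)$ recorded in Subsection 2.2: choosing $i \in {\rm Im}(\ell) \setminus \{0\}$ gives $a := i^2 \in k^\ast$, and $a$ is a non-square, since $i^2 = c^2$ with $c \in k^\ast$ would force $i = \pm c \in k$, contradicting $i \in \ell \setminus k$. Hence $\ell = k \oplus ki = k(\sqrt a)$. (Equivalently, completing the square in the minimal polynomial of any element of $\ell \setminus k$ produces such an $i$, and this is exactly where ${\rm char}(k) \not= 2$ enters.)

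The substantive step is (iii), and its converse direction is the part I expect to require the only real care. For the easy direction, if $a = bc^2$ with $c \in k^\ast$, then $c^{-1}\sqrt a \in k(\sqrt a)$ squares to $b$, so the $k$-algebra map $k(\sqrt b) \to k(\sqrt a)$ determined by $\sqrt b \mapsto c^{-1}\sqrt a$ is well defined and, being a nonzero map between $2$-dimensional division algebras, an isomorphism. For the converse, suppose $\varphi\colon k(\sqrt a)\ \tilde{\to}\ k(\sqrt b)$ is given. It fixes $k$ elementwise, so $\varphi(\sqrt a)^2 = \varphi(a) = a$; writing $\varphi(\sqrt a) = u + v\sqrt b$ in the basis $(1,\sqrt b)$ and expanding, the coefficient of $\sqrt b$ yields $2uv = 0$, whence $uv = 0$ because ${\rm char}(k) \not= 2$. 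If $v = 0$ then $\varphi(\sqrt a) = u \in k$, impossible as $\varphi$ is injective and $\sqrt a \notin k$; so $u = 0$ and $a = v^2 b$ with $v \in k^\ast$, i.e.~$ak_{sq}^\ast = bk_{sq}^\ast$. (The same conclusion follows conceptually: $\varphi$ carries ${\rm Im}(k(\sqrt a))$ onto ${\rm Im}(k(\sqrt b))$, sending a generator $i$ with $i^2 = a$ to $vi'$ with $i'^2 = b$, so $a = v^2 b$.)

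Finally, (i)--(iii) show that $a \mapsto k(\sqrt a)$ descends to a well-defined bijection $(k^\ast/k_{sq}^\ast)^\circ\ \tilde{\to}\ \mathscr{Q}(k)/\hspace{-0,1cm}\simeq$, surjective by (ii) and injective by (iii). Statement (iv) is then the formal remark that a subset $L \subset k^\ast \setminus k_{sq}^\ast$ meets each nontrivial coset exactly once if and only if its image meets each isoclass exactly once, and (v) follows by counting, $|k^\ast/k_{sq}^\ast| - 1 = |(k^\ast/k_{sq}^\ast)^\circ| = |\mathscr{Q}(k)/\hspace{-0,1cm}\simeq|$. The whole argument is elementary; the one place that genuinely uses the hypothesis ${\rm char}(k) \not= 2$ is the cross-term cancellation in the converse of (iii), which is also the step I flag as the \emph{main obstacle}.
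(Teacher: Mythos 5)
Your proof is correct, and there is nothing in the paper to compare it against: the author explicitly declines to prove this proposition, remarking only that its ``proof makes an exercise in basic algebra,'' and your argument is exactly the standard solution that remark presupposes --- irreducibility of $X^2-a$ for (i), the decomposition $\ell = k \oplus {\rm Im}(\ell)$ from Subsection 2.2 for (ii), coefficient comparison in the basis $(1,\sqrt{b}\,)$ for (iii), and the induced bijection $\left(k^\ast/k_{sq}^\ast\right)^\circ\ \tilde{\to}\ \mathscr{Q}(k)/\hspace{-0,1cm}\simeq$ yielding (iv) and (v) formally. One micro-correction to your closing commentary: the hypothesis ${\rm char}(k) \not= 2$ is genuinely used not only in the cross-term cancellation in the converse of (iii) but also in (ii), since in characteristic 2 a separable quadratic extension is of Artin--Schreier type rather than of the form $k(\sqrt{a}\,)$, and the decomposition $\ell = k \oplus {\rm Im}(\ell)$ you invoke is available only when $2$ is invertible --- a dependence you yourself flag parenthetically in (ii), so your final sentence slightly understates where the hypothesis enters, without affecting the validity of the proof.
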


\section{Classifications}

We retain our assumption, that $k$ is a field of characteristic not 2. The present section is devoted to the problem of classifying the groupoid $\mathscr{C}(k)$. This problem is reduced in the first 
instance, by Proposition \ref{L-coverings}, to the problem of classifying $\mathscr{C}_\ell(k)$ for all $\ell$ in a transversal $\mathscr{L}$ for $\mathscr{Q}(k)/\hspace{-0,1cm}\simeq$, and 
such a transversal can be found in accordance with Proposition \ref{quadratic extensions} (iv). For each $\ell \in \mathscr{L}$, the problem of classifying $\mathscr{C}_\ell(k)$ is in turn reduced, by 
Theorems \ref{reduction of objects} and \ref{reduction of morphisms}, to the problem of classifying all $\ell$-admissible triples up to $\ell^\ast$-equivalence. The latter problem, when restricted to 
$C_\ell^{\mathscr{S}} \sqcup C_\ell^{\mathscr{K}}$, can be solved on the level of two punctured quotient groups of $k^\ast$.

These reductions, which we here only outlined, are made precise in Subsection 5.1 below, and explicit classifications will be derived from them in Subsections 5.3-5.4.

\subsection{Reduction of the classification problem of $\mathscr{C}(k)$}

Let $k \subset \ell$ be a quadratic extension in characteristic not 2. On the set $C_\ell$ of all $\ell$-admissible triples we introduce a binary relation $\sim$, defined by
$\underline{c} \sim \underline{d}$ if and only if 
\mbox{$\ell^\ast(\underline{c},\underline{d}) \not= \emptyset$.} It is in fact an equivalence relation, called {\it $\ell^\ast$-equivalence} on $C_\ell$. Its equivalence classes refine the 
partition $C_\ell = C_\ell^{\mathscr{N}} \sqcup C_\ell^{\mathscr{S}} \sqcup C_\ell^{\mathscr{K}}$, introduced in Subsection 3.2. The set of all $\ell^\ast$-equivalence classes of 
$C_\ell, C_\ell^{\mathscr{N}}, C_\ell^{\mathscr{S}}$ and $C_\ell^{\mathscr{K}}$ respectively will be denoted by $C_\ell/\ell^\ast, C_\ell^{\mathscr{N}}/\ell^\ast, C_\ell^{\mathscr{S}}/\ell^\ast$ and $C_\ell^{\mathscr{K}}/\ell^\ast$.

We proceed with the following improvement of Corollary \ref{constructed division algebras}.

\begin{cor} \label{constructed l-division algebras}
If $\underline{c} \in C_\ell$, then $A(\ell,\underline{c}) \in \mathscr{C}_\ell(k)$.
\end{cor}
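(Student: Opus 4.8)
The plan is to prove that $A(\ell,\underline{c}) \in \mathscr{C}_\ell(k)$ by combining Corollary \ref{constructed division algebras} with the explicit description of a Kleinian subfield of $A(\ell,\underline{c})$. Recall that Corollary \ref{constructed division algebras} already guarantees $A(\ell,\underline{c}) \in \mathscr{C}(k)$ whenever $\underline{c} \in C_\ell$; thus the entire content of the improvement lies in verifying the membership condition defining $\mathscr{C}_\ell(k)$, namely that $\ell\ \tilde{\to}\ n$ for some Kleinian subfield $n \in {\cal N}(A(\ell,\underline{c}))$. My strategy is therefore to exhibit $1_A\ell$ itself as a Kleinian subfield of $A = A(\ell,\underline{c})$.

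First I would recall the two pieces of structure already furnished by Proposition \ref{constructed algebras}. By part (ii), the subspace $1_A\ell$ is a $k$-subalgebra canonically isomorphic to $\ell$, sitting inside the filtration $1_Ak \subset 1_A\ell \subset N_r(A)$. By part (iii), the operators $\alpha$ and $\beta$ displayed there form a Kleinian pair $(\alpha,\beta)$ for $A$, since they generate Klein's four-group. To match the definition of a Kleinian subfield from Subsection 4.2, I must check that $1_A\ell = E_\beta(1)$, where $\beta$ is the conjugation automorphism $\beta\left(\begin{smallmatrix}x\\y\end{smallmatrix}\right) = \left(\begin{smallmatrix}\overline{x}\\\overline{y}\end{smallmatrix}\right)$.

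The key step is this eigenspace computation, and it is essentially immediate: an element $\left(\begin{smallmatrix}x\\y\end{smallmatrix}\right)$ lies in $E_\beta(1)$ precisely when $\overline{x}=x$ and $\overline{y}=y$, i.e.\ when $x,y \in k$. However, this gives $E_\beta(1) = k1_A \oplus k j$ rather than $1_A\ell$, so the naive pairing $(\alpha,\beta)$ is not the one that realizes $1_A\ell$ as $E_\beta(1)$. Instead I would pick the partner whose fixed space is $1_A\ell$: since $1_A\ell = \{(\begin{smallmatrix}x\\0\end{smallmatrix}) \mid x \in \ell\}$ is cut out by the condition $y = 0$, and $\alpha\left(\begin{smallmatrix}x\\y\end{smallmatrix}\right) = \left(\begin{smallmatrix}x\\-y\end{smallmatrix}\right)$ has $E_\alpha(1) = 1_A\ell$, the correct choice is to use the Kleinian pair with $\alpha$ in the role of the ``$\beta$'' of the definition. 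Concretely, $(\beta,\alpha)$ is a Kleinian pair for which $E_\alpha(1) = 1_A\ell \subset N_r(A)$, exactly as demanded, so $1_A\ell$ is a Kleinian subfield of $A$.

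Finally, since $1_A\ell\ \tilde{\to}\ \ell$ by Proposition \ref{constructed algebras} (ii) and $1_A\ell \in {\cal N}(A)$, the defining condition $\ell\ \tilde{\to}\ n$ for some $n \in {\cal N}(A)$ is met with $n = 1_A\ell$, whence $A(\ell,\underline{c}) \in \mathscr{C}_\ell(k)$. I expect no genuine obstacle here; the only subtlety worth care is the bookkeeping of which automorphism in the Kleinian pair plays the role singled out in the definition of a Kleinian subfield, so that its $+1$-eigenspace is $1_A\ell$ and not the complementary fixed space $k1_A \oplus kj$. Once that identification is made correctly, the corollary follows directly from results already established.
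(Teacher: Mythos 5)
Your proof is correct and takes essentially the same route as the paper, which likewise combines Corollary \ref{constructed division algebras} with Proposition \ref{constructed algebras} (ii)--(iii) to conclude that $1_A\ell$ is a Kleinian subfield of $A(\ell,\underline{c})$ isomorphic to $\ell$. Your explicit eigenspace bookkeeping --- passing to the pair $(\beta,\alpha)$ so that the fixed space of the second member is $E_\alpha(1) = 1_A\ell$ rather than $E_\beta(1) = k1_A \oplus kj$ --- simply spells out what the paper leaves implicit, and in fact matches the paper's own use of the pair $(\beta,\alpha)$ in Step 3 of the proof of Theorem \ref{reduction of morphisms}.
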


\begin{proof}
If $\underline{c} \in C_\ell$, then $A(\ell,\underline{c}) \in \mathscr{C}(k)$, by Corollary \ref{constructed division algebras}. Proposition \ref{constructed algebras} (ii)-(iii) shows that 
the subfield $1_A\ell \subset A(\ell,\underline{c})$ is Kleinian and is isomorphic to $\ell$. So 
$A(\ell,\underline{c}) \in \mathscr{C}_\ell(k)$.
\end{proof}

\noindent
It is thus justified to view the map $\mathscr{F}_\ell: C_\ell \to \mathscr{C}$, introduced subsequent to Corollary \ref{constructed division algebras}, as a map 
\[ \mathscr{F}_\ell: C_\ell \to \mathscr{C}_\ell,\ \mathscr{F}_\ell(\underline{c}) = A(\ell,\underline{c}), \] 
where $\mathscr{C}_\ell = \mathscr{C}_\ell(k)$. It induces the maps\vspace{0,1cm}
$\mathscr{F}_\ell: C_\ell^{\mathscr{N}} \to \mathscr{N}_\ell, \mathscr{F}_\ell: C_\ell^{\mathscr{S}} \to \mathscr{S}_\ell$ and $\mathscr{F}_\ell: C_\ell^{\mathscr{K}} \to \mathscr{K}_\ell$, 
where $\mathscr{N}_\ell = \mathscr{N}_\ell(k), \mathscr{S}_\ell = \mathscr{S}_\ell(k)$ and $\mathscr{K}_\ell = \mathscr{K}_\ell(k)$. 

\begin{theo} \label{reduction of classifications}
For every quadratic extension $k \subset \ell$ in characteristic not 2, and for all subsets $T_\ell, T_\ell^{\mathscr{N}}, T_\ell^{\mathscr{S}}$ and $T_\ell^{\mathscr{K}}$ of $C_\ell, C_\ell^{\mathscr{N}}, C_\ell^{\mathscr{S}}$
and $C_\ell^{\mathscr{K}}$ respectively, the following statements hold true.
\\[1ex]
(i) $T_\ell$ is a transversal for $C_\ell/\ell^\ast$ if and only if $\mathscr{F}_\ell(T_\ell)$ classifies $\mathscr{C}_\ell$.
\\[1ex]
(ii) $T_\ell^{\mathscr{N}}$ is a transversal for $C_\ell^{\mathscr{N}}/\ell^\ast$ if and only if $\mathscr{F}_\ell(T_\ell^{\mathscr{N}})$ classifies $\mathscr{N}_\ell$.
\\[1ex]
(iii) $T_\ell^{\mathscr{S}}$ is a transversal for $C_\ell^{\mathscr{S}}/\ell^\ast$ if and only if $\mathscr{F}_\ell(T_\ell^{\mathscr{S}})$ classifies $\mathscr{S}_\ell$.
\\[1ex]
(iv) $T_\ell^{\mathscr{K}}$ is a transversal for $C_\ell^{\mathscr{K}}/\ell^\ast$ if and only if $\mathscr{F}_\ell(T_\ell^{\mathscr{K}})$ classifies $\mathscr{K}_\ell$.
\end{theo}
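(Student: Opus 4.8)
The plan is to deduce all four equivalences from a single transfer principle applied to the map $\mathscr{F}_\ell\colon C_\ell \to \mathscr{C}_\ell$, $\mathscr{F}_\ell(\underline{c}) = A(\ell,\underline{c})$. I would first isolate three properties of $\mathscr{F}_\ell$: (P1) it is injective; (P2) it reflects and preserves isomorphism in the sharp form $A(\ell,\underline{c}) \simeq A(\ell,\underline{d})$ if and only if $\underline{c} \sim \underline{d}$; and (P3) it is dense, i.e.\ every object of $\mathscr{C}_\ell$ is isomorphic to $A(\ell,\underline{c})$ for some $\underline{c} \in C_\ell$. Granting these, $\mathscr{F}_\ell$ descends to a bijection $C_\ell/\ell^\ast\ \tilde{\to}\ \mathscr{C}_\ell/\!\simeq$, and statement (i) becomes a purely set-theoretic assertion about how transversals correspond under such a map.

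Property (P2) is exactly Theorem \ref{reduction of morphisms} read through the definition of $\ell^\ast$-equivalence. Property (P3) follows from Theorem \ref{reduction of objects}: given $A \in \mathscr{C}_\ell$ there is a Kleinian subfield $n \subset A$ with $n\ \tilde{\to}\ \ell$, and Theorem \ref{reduction of objects}(ii) supplies $\underline{c} \in C_n$ with $A(n,\underline{c})\ \tilde{\to}\ A$; since the construction $A(\ell,\underline{c})$, the admissibility condition, and the quadratic form $q_{\underline{c}}$ all depend on the quadratic extension only up to $k$-isomorphism, any $k$-isomorphism $n\ \tilde{\to}\ \ell$ (which commutes with conjugation) yields $C_n = C_\ell$ and $A(n,\underline{c})\ \tilde{\to}\ A(\ell,\underline{c})$, whence $A\ \tilde{\to}\ A(\ell,\underline{c})$ with $\underline{c} \in C_\ell$; Corollary \ref{constructed l-division algebras} guarantees the codomain is indeed $\mathscr{C}_\ell$. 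Property (P1) I would check by reading the triple off the multiplication table: with the $k$-basis $1_A, 1_A i, j, ji$, where $i \in {\rm Im}(\ell)\setminus\{0\}$, a short computation gives $j\cdot j = (c_2+c_3)1_A$, $(ji)\cdot j = (c_2-c_3)(1_A i)$ and $(1_A i)\cdot j = (1-2c_1)(ji)$, so the product determines $(c_1,c_2,c_3)$ and distinct admissible triples yield distinct algebras.

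The transfer principle I would formulate and prove is: if $f\colon X \to Y$ is injective, $\sim_X, \sim_Y$ are equivalence relations with $f$ dense and $x \sim_X x' \Leftrightarrow f(x)\sim_Y f(x')$, then $T \subset X$ is a transversal for $X/\!\sim_X$ if and only if $f(T)$ is a transversal for $Y/\!\sim_Y$. The forward direction uses only denseness and the biconditional: denseness together with $T$ meeting every $\sim_X$-class shows $f(T)$ meets every $\sim_Y$-class, while distinct $t,t' \in T$ satisfy $t\not\sim_X t'$, hence $f(t)\not\sim_Y f(t')$, so the images lie in distinct classes. The converse is where injectivity is indispensable and is the main obstacle: if $t,t' \in T$ satisfy $t \sim_X t'$ then $f(t)\sim_Y f(t')$, so both lie in one $\sim_Y$-class of the transversal $f(T)$, forcing $f(t)=f(t')$ and hence $t=t'$ by injectivity; denseness again gives that $T$ meets every $\sim_X$-class. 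Without (P1) this converse genuinely fails — a triple occurring twice within an $\ell^\ast$-class would collapse under $\mathscr{F}_\ell$ yet survive in $C_\ell$ — which is why establishing injectivity of $\mathscr{F}_\ell$ is the crux rather than a formality. Applying the principle with $f = \mathscr{F}_\ell$, $\sim_X = {}\ell^\ast$-equivalence and $\sim_Y = {}\simeq$ proves (i).

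For (ii)--(iv) I would restrict. Since $\ell^\ast$-equivalence refines the partition $C_\ell = C_\ell^{\mathscr{N}}\sqcup C_\ell^{\mathscr{S}}\sqcup C_\ell^{\mathscr{K}}$ and, by definition of these subsets as preimages, $\mathscr{F}_\ell$ carries $C_\ell^{\mathscr{N}}, C_\ell^{\mathscr{S}}, C_\ell^{\mathscr{K}}$ into $\mathscr{N}, \mathscr{S}, \mathscr{K}$ respectively, the restricted maps remain injective and iso-reflecting, and their images lie in $\mathscr{N}_\ell, \mathscr{S}_\ell, \mathscr{K}_\ell$ by Corollary \ref{constructed l-division algebras}; denseness onto each block follows from (P3) and the fact that the blocks are closed under isomorphism within $\mathscr{C}$ (Proposition \ref{decomposition}). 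The transfer principle then applies verbatim to each restriction, yielding (ii), (iii) and (iv).
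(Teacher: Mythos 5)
Your proposal is correct, and its skeleton coincides with the paper's own proof: both directions of (i) rest on exactly the two inputs you isolate --- Theorem~\ref{reduction of morphisms} for the biconditional between $\ell^\ast$-equivalence and isomorphism of the constructed algebras, and Theorem~\ref{reduction of objects}~(ii) transported along a $k$-isomorphism $n\ \tilde{\to}\ \ell$ of the Kleinian subfield (which commutes with conjugation, whence $C_n = C_\ell$ and $A(n,\underline{c})\ \tilde{\to}\ A(\ell,\underline{c})$) for denseness --- and you handle (ii)--(iv) by restriction, just as the paper disposes of them with ``proved analogously''. Where you go beyond the paper is property (P1): the paper's proof never mentions injectivity of $\mathscr{F}_\ell$, asserting directly that $T_\ell$ is irredundant for $\ell^\ast$-equivalence if and only if $\mathscr{F}_\ell(T_\ell)$ is irredundant for isomorphism; this tacitly uses that distinct triples yield distinct objects, since otherwise two distinct equivalent triples could collapse to a single object, leaving the image irredundant while $T_\ell$ is not --- precisely the failure mode you describe. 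Your structure-constant computation $j \cdot j = (c_2+c_3)1_A$, $(ji)\cdot j = (c_2-c_3)(1_Ai)$, $(1_Ai)\cdot j = (1-2c_1)(ji)$, with characteristic not 2 recovering $(c_1,c_2,c_3)$, settles this correctly, so your abstract transfer lemma is a clean formalization that makes explicit a point the paper leaves implicit rather than a detour. One micro-correction: in the converse direction of your transfer principle, the fact that $T$ meets every $\sim_X$-class follows from $f(T)$ meeting every $\sim_Y$-class together with the biconditional (P2), not from denseness of $f$ as you state; denseness is needed only in the forward direction.
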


\begin{proof}
{\it (i)} Let $k \subset \ell$ and $T_\ell \subset C_\ell$ be given. Because the equivalences 
\[ \underline{c} \sim \underline{d}\ \Leftrightarrow\ \ell^\ast(\underline{c},\underline{d}) \not= \emptyset\ \Leftrightarrow\  A(\ell,\underline{c})\ \tilde{\to}\ A(\ell,\underline{d}) \]
hold for all $\underline{c},\underline{d} \in T_\ell$ (Theorem \ref{reduction of morphisms}), the subset $T_\ell \subset C_\ell$ is irredundant regarding $\ell^\ast$-equivalence if and only if the 
subset $\mathscr{F}_\ell(T_\ell) \subset \mathscr{C}_\ell$ is irredundant regarding isomorphism. 

Suppose that $T_\ell$ exhausts $C_\ell/\ell^\ast$, and let $A \in \mathscr{C}_\ell$. Then $A$ has a Kleinian subfield $n$, such that $\ell\ \tilde{\to}\ n$. Theorem \ref{reduction of objects} (ii) guarantees the 
existence of an $n$-admissible triple $\underline{c} \in C_n$, such that $A(n,\underline{c})\ \tilde{\to}\ A$. Moreover, $\ell\ \tilde{\to}\ n$ implies that $\underline{c} \in C_\ell$ and 
$A(\ell,\underline{c})\ \tilde{\to}\ A(n,\underline{c})$. By hypothesis on $T_\ell$, the triple $\underline{c} \in C_\ell$ is $\ell^\ast$-equivalent to some $\underline{d} \in T_\ell$.
Applying Theorem \ref{reduction of morphisms}, we arrive at the chain of isomorphisms $\mathscr{F}_\ell(\underline{d}) = A(\ell,\underline{d})\ \tilde{\to}\ A(\ell,\underline{c})\ \tilde{\to}\ 
A(n,\underline{c})\ \tilde{\to}\ A$, which proves that $\mathscr{F}_\ell(T_\ell)$ exhausts $\mathscr{C}_\ell/\hspace{-0,1cm}\simeq$.

Conversely, suppose that $\mathscr{F}_\ell(T_\ell)$ exhausts $\mathscr{C}_\ell/\hspace{-0,1cm}\simeq$, and let $\underline{c} \in C_\ell$. Then $\mathscr{F}_\ell(\underline{c}) \in \mathscr{C}_\ell$ (Corollary 
\ref{constructed l-division algebras}). By hypothesis on $T_\ell$, the division algebra $\mathscr{F}_\ell(\underline{c})$ is isomorphic to $\mathscr{F}_\ell(\underline{d})$, for some $\underline{d} \in T_\ell$.
With Theorem \ref{reduction of morphisms} we deduce the $\ell^\ast$-equivalence $\underline{c} \sim \underline{d}$, which proves that $T_\ell$ exhausts $C_\ell/\ell^\ast$.
\\[1ex]
{\it (ii)-(iv)} These statements are proved analogously.
\end{proof}

\noindent
Transversals for $C_\ell^{\mathscr{S}}/\ell^\ast$ and $C_\ell^{\mathscr{K}}/\ell^\ast$ can be found as follows.

\begin{pro} \label{transversals of punctured quotient groups}
For every quadratic extension $k \subset \ell$ in characteristic \mbox{not 2,} the following holds true.
\\[1ex]
(i) $C_\ell^\mathscr{S} = \{ (1,0,c_3)\ |\ c_3 \in k^\ast \setminus {\rm im}(n_{\ell/k}^\ast) \}$. 
\\[1ex]
(ii) A subset $S_\ell \subset k^\ast \setminus {\rm im}(n_{\ell/k}^\ast)$ is a transversal for $(k^\ast/{\rm im}(n_{\ell/k}^\ast))^\circ$ if and only if $\hat{S}_\ell =  \{ (1,0,c_3)\ |\ c_3 \in S_\ell \}$ 
is a transversal for $C_\ell^\mathscr{S}/\ell^\ast$.
\\[1ex]
(iii) $C_\ell^\mathscr{K} = \{ (0,c_2,0)\ |\ c_2 \in k^\ast \setminus \ell_{sq}^\ast \}$.
\\[1ex]
(iv) A subset $K_\ell \subset k^\ast \setminus \ell_{sq}^\ast$ is a transversal for $(k^\ast/(\ell_{sq}^\ast \cap k^\ast))^\circ$ if and only if $\hat{K}_\ell =  \{ (0,c_2,0)\ |\ c_2 \in K_\ell \}$ 
is a transversal for $C_\ell^\mathscr{K}/\ell^\ast$.
\end{pro}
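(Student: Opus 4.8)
The plan is to treat the pairs (i)/(ii) and (iii)/(iv) in parallel: in each case I would first make the admissibility condition explicit by specializing the defining function $q_{\underline{c}}$, and then translate $\ell^\ast$-equivalence into coset equivalence for a suitable subgroup of $k^\ast$.

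For (i) I would start from the characterization $C_\ell^{\mathscr{S}} = \{\underline{c} \in C_\ell\ |\ (c_1,c_2)=(1,0)\}$ (Proposition \ref{types of admissible triples} (ii)). Substituting $(c_1,c_2)=(1,0)$ into $q_{\underline{c}}$ yields $q_{\underline{c}}(x,y) = x\overline{x} - c_3 y\overline{y} = n_{\ell/k}(x) - c_3\, n_{\ell/k}(y)$, and $\underline{c}$ is $\ell$-admissible precisely when this function is anisotropic. Since $n_{\ell/k}(x) \in k^\ast$ for every $x \in \ell^\ast$, a nontrivial zero forces $y \neq 0$ and hence $c_3 = n_{\ell/k}(x/y)$; conversely every $c_3 \in {\rm im}(n_{\ell/k}^\ast)$, as well as $c_3 = 0$, produces an isotropic vector. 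So anisotropy is equivalent to $c_3 \in k^\ast \setminus {\rm im}(n_{\ell/k}^\ast)$, which is (i). For (ii) I would use that ${\rm im}(n_{\ell/k}^\ast)$ is a subgroup of $k^\ast$, being the image of the group morphism $n_{\ell/k}^\ast$. Unwinding the definition of $\ell^\ast(\underline{c},\underline{d})$ for triples $(1,0,c_3),(1,0,d_3)$, the first two coordinates match automatically, and the remaining condition reduces to $c_3 = d_3\, a\overline{a}$ for some $a \in \ell^\ast$, i.e.~$c_3 d_3^{-1} \in {\rm im}(n_{\ell/k}^\ast)$. Thus the bijection $c_3 \mapsto (1,0,c_3)$ from (i) carries the partition of $k^\ast \setminus {\rm im}(n_{\ell/k}^\ast)$ into non-identity cosets of ${\rm im}(n_{\ell/k}^\ast)$ onto the partition of $C_\ell^{\mathscr{S}}$ into $\ell^\ast$-equivalence classes, whence transversals correspond, giving (ii).

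Parts (iii) and (iv) run identically, with the norm map replaced by the square map. Substituting $(c_1,c_3)=(0,0)$ (Proposition \ref{types of admissible triples} (iii)) gives $q_{\underline{c}}(x,y) = x^2 - c_2 y^2$, whose anisotropy over $\ell$ is equivalent to $c_2$ being a nonzero non-square in $\ell$, i.e.~$c_2 \in k^\ast \setminus \ell_{sq}^\ast$; this is (iii). For the equivalence relation, $\ell^\ast(\underline{c},\underline{d}) \neq \emptyset$ for $(0,c_2,0),(0,d_2,0)$ reduces to $c_2 = d_2 a^2$ for some $a \in \ell^\ast$, and since $c_2 d_2^{-1} \in k^\ast$ this says precisely $c_2 d_2^{-1} \in \ell_{sq}^\ast \cap k^\ast$, a subgroup of $k^\ast$. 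As before, the bijection $c_2 \mapsto (0,c_2,0)$ matches the non-identity cosets of $\ell_{sq}^\ast \cap k^\ast$ with the $\ell^\ast$-equivalence classes, yielding (iv).

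The computations of $q_{\underline{c}}$ and of $\ell^\ast(\underline{c},\underline{d})$ are routine; the one point requiring care throughout is the bookkeeping that links the \emph{punctured} quotient group to the transversal of equivalence classes. Concretely, I must verify that the elements lying outside the relevant subgroup (which is exactly the admissibility condition isolated in (i) and (iii)) are precisely the representatives of the non-identity cosets, so that cosets other than the subgroup itself are entirely contained in the admissible set and therefore coincide with single $\ell^\ast$-equivalence classes. Once this identification is in place, a transversal for $(k^\ast/{\rm im}(n_{\ell/k}^\ast))^\circ$, respectively $(k^\ast/(\ell_{sq}^\ast \cap k^\ast))^\circ$, corresponds bijectively to a transversal for $C_\ell^{\mathscr{S}}/\ell^\ast$, respectively $C_\ell^{\mathscr{K}}/\ell^\ast$, completing the proof.
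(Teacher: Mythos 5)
Your proposal is correct and follows essentially the same route as the paper: specialize $q_{\underline{c}}$ via Proposition \ref{types of admissible triples} to identify the anisotropy condition in (i) and (iii), then unwind $\ell^\ast(\underline{c},\underline{d})$ to translate $\ell^\ast$-equivalence into coset equality modulo ${\rm im}(n_{\ell/k}^\ast)$, respectively $\ell_{sq}^\ast \cap k^\ast$, in (ii) and (iv). Your explicit bookkeeping that the non-identity cosets exhaust $k^\ast \setminus {\rm im}(n_{\ell/k}^\ast)$, respectively $k^\ast \setminus \ell_{sq}^\ast$, merely spells out what the paper's terser proof leaves implicit.
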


\begin{proof}
{\it (i)} By Proposition \ref{types of admissible triples} (ii), $C_\ell^\mathscr{S}$ consists of all triples $\underline{c} = (1,0,c_3) \in k^3$ such that the associated function 
$q_{\underline{c}}: \ell^2 \to \ell,\ q_{\underline{c}}(x,y) = x\overline{x} - c_3 y\overline{y}$ is anisotropic. This occurs if and only if $c_3 \in k^\ast \setminus {\rm im}(n_{\ell/k}^\ast)$. 
\\[1ex]
{\it (ii)} Let $S_\ell \subset k^\ast \setminus {\rm im}(n_{\ell/k}^\ast)$ be given. Then, for all $c, d \in S_\ell$, the\vspace{0,1cm} \mbox{coset identity} $c({\rm im}(n_{\ell/k}^\ast)) = d({\rm im}(n_{\ell/k}^\ast))$ holds 
if and only if $(1,0,c)$ and $(1,0,d)$ are \mbox{$\ell^\ast$-equivalent.} Together with (i), this establishes the claimed equivalence.
\\[1ex]
{\it (iii)} By Proposition \ref{types of admissible triples} (iii), $C_\ell^\mathscr{K}$ consists of all triples $\underline{c} = (0,c_2,0) \in k^3$ such that the associated function 
$q_{\underline{c}}: \ell^2 \to \ell,\ q_{\underline{c}}(x,y) = x^2 - c_2 y^2$ is anisotropic. This occurs if and only if $c_2 \in k^\ast \setminus \ell_{sq}^\ast$.
\\[1ex]
{\it (iv)} Let $K_\ell \subset k^\ast \setminus \ell_{sq}^\ast$ be given. Then, for all $c, d \in K_\ell$, the\vspace{0,1cm} coset identity\linebreak $c(\ell_{sq}^\ast \cap k^\ast) = d(\ell_{sq}^\ast \cap k^\ast)$ holds if and only if 
$(0,c,0)$ and $(0,d,0)$ are $\ell^\ast$-equi\-valent. Together with (iii), this establishes the claimed equivalence.
\end{proof}

\noindent
Theorem \ref{reduction of classifications} (iii)-(iv) and Proposition \ref{transversals of punctured quotient groups} have the following immediate consequence. 

\begin{cor} \label{one-parameter classifications}
For every quadratic extension $k \subset \ell$ in characteristic \mbox{not 2,} the following holds true.
\\[1ex]
(i) A subset $S_\ell \subset k^\ast \setminus {\rm im}(n_{\ell/k}^\ast)$ is a transversal for $(k^\ast/{\rm im}(n_{\ell/k}^\ast))^\circ$ if and only if the associated set of central skew fields
$\{ A(\ell,(1,0,c_3))\ |\ c_3 \in S_\ell \}$ \mbox{classifies $\mathscr{S}_\ell$.}
\\[1ex]
(ii) A subset $K_\ell \subset k^\ast \setminus \ell_{sq}^\ast$ is a transversal for $(k^\ast/(\ell_{sq}^\ast \cap k^\ast))^\circ$ if and only if the associated set of fields
$\{ A(\ell,(0,c_2,0))\ |\ c_2 \in K_\ell \}$ classifies $\mathscr{K}_\ell$.
\end{cor}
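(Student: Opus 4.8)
The plan is to obtain each of the two biconditionals by simply composing the two results cited just before the statement, since they share a common intermediate notion: a transversal for the relevant set of $\ell^\ast$-equivalence classes. No new idea is required beyond correctly matching up the hypotheses of the two cited results.

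For part (i), I would set $T_\ell^{\mathscr{S}} = \hat{S}_\ell = \{(1,0,c_3)\mid c_3\in S_\ell\}$, the subset of $C_\ell^{\mathscr{S}}$ that Proposition \ref{transversals of punctured quotient groups} (ii) associates with $S_\ell$. That proposition then supplies the first equivalence
\[ S_\ell \text{ is a transversal for } (k^\ast/{\rm im}(n_{\ell/k}^\ast))^\circ \iff \hat{S}_\ell \text{ is a transversal for } C_\ell^{\mathscr{S}}/\ell^\ast, \]
while feeding $T_\ell^{\mathscr{S}} = \hat{S}_\ell$ into Theorem \ref{reduction of classifications} (iii) supplies the second,
\[ \hat{S}_\ell \text{ is a transversal for } C_\ell^{\mathscr{S}}/\ell^\ast \iff \mathscr{F}_\ell(\hat{S}_\ell) \text{ classifies } \mathscr{S}_\ell. \]
Chaining these two gives the claim, once I unwind the definition $\mathscr{F}_\ell(\underline{c}) = A(\ell,\underline{c})$ to record that $\mathscr{F}_\ell(\hat{S}_\ell) = \{A(\ell,(1,0,c_3))\mid c_3\in S_\ell\}$, which is exactly the displayed set of central skew fields. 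Part (ii) proceeds identically, now with $K_\ell$, with $\hat{K}_\ell = \{(0,c_2,0)\mid c_2\in K_\ell\}$, with Proposition \ref{transversals of punctured quotient groups} (iv) and Theorem \ref{reduction of classifications} (iv) in the analogous roles, and with $\mathscr{F}_\ell(\hat{K}_\ell) = \{A(\ell,(0,c_2,0))\mid c_2\in K_\ell\}$ giving the displayed set of fields.

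There is no genuine obstacle: the statement is a formal transitivity of two equivalences whose common middle term is ``being a transversal for an $\ell^\ast$-equivalence class set,'' which is precisely why the surrounding text flags it as an immediate consequence. The only points demanding a moment's care are the bookkeeping identification of $T_\ell^{\mathscr{S}}$ (respectively $T_\ell^{\mathscr{K}}$) with $\hat{S}_\ell$ (respectively $\hat{K}_\ell$), and the verification that the image of $\mathscr{F}_\ell$ on these transversals is exactly the explicitly displayed family of algebras.
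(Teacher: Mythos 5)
Your proposal is correct and coincides with the paper's own (unwritten) argument: the paper presents the corollary as an immediate consequence of Proposition \ref{transversals of punctured quotient groups} (ii), (iv) and Theorem \ref{reduction of classifications} (iii), (iv), which is precisely the chaining of equivalences you carry out, with $T_\ell^{\mathscr{S}} = \hat{S}_\ell$ and $T_\ell^{\mathscr{K}} = \hat{K}_\ell$. Your explicit identification of $\mathscr{F}_\ell(\hat{S}_\ell)$ and $\mathscr{F}_\ell(\hat{K}_\ell)$ with the displayed families of algebras is the only bookkeeping needed, and you handle it correctly.
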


\noindent
Let us summarize our present insight into the classification problem of $\mathscr{C}(k)$, where $k$ is any field of characteristic not 2. With support of Propositions \ref{quadratic extensions} and
\ref{transversals of punctured quotient groups} we may assume that a transversal $\mathscr{L}$ for $\mathscr{Q}(k)/\hspace{-0,1cm}\simeq$ is known and that, for each $\ell \in \mathscr{L}$, 
transversals $\hat{S}_\ell$ for $C_\ell^\mathscr{S}/\ell^\ast$ and $\hat{K}_\ell$ for $C_\ell^\mathscr{K}/\ell^\ast$ are known. Then $\bigcup_{\ell \in \mathscr{L}} \mathscr{F}_\ell(\hat{S}_\ell)$ exhausts 
$\mathscr{S}(k)/\hspace{-0,1cm}\simeq$ and $\bigcup_{\ell \in \mathscr{L}} \mathscr{F}_\ell(\hat{K}_\ell)$ exhausts $\mathscr{K}(k)/\hspace{-0,1cm}\simeq$, by Proposition \ref{L-coverings} and
Theorem \ref{reduction of classifications} (iii)-(iv). In order to accomplish a classification of $\mathscr{C}(k)$ on this basis, it suffices in view of Propositions \ref{decomposition} and \ref{L-coverings}
and Theorem \ref{reduction of classifications} (ii), to solve the following four remaining  problems. 
\\[1ex]
{\bf Problem 1.} {\it For each $\ell \in \mathscr{L}$, display the subset $C_\ell^\mathscr{N} \subset k^3$ explicitly.}
\\[1ex]
{\bf Problem 2.} {\it For each $\ell \in \mathscr{L}$, find a transversal $T_\ell^\mathscr{N}$ for $C_\ell^\mathscr{N}/\ell^\ast$.}
\\[1ex]
{\bf Problem 3.} {\it Find a subset $T^\mathscr{S} \subset \bigsqcup_{\ell \in \mathscr{L}} \left( \{\ell\} \times \hat{S}_\ell \right)$ such that the associated subset  
$\left\{ A(\ell,\underline{c})\ |\ (\ell,\underline{c}) \in T^\mathscr{S} \right\} \subset \bigcup_{\ell \in \mathscr{L}} \mathscr{F}_\ell(\hat{S}_\ell)$ classifies $\mathscr{S}(k)$.}
\\[1ex]
{\bf Problem 4.} {\it Find a subset $T^\mathscr{K} \subset \bigsqcup_{\ell \in \mathscr{L}} \left( \{\ell\} \times \hat{K}_\ell \right)$ such that the associated subset  
$\left\{ A(\ell,\underline{c})\ |\ (\ell,\underline{c}) \in T^\mathscr{K} \right\} \subset \bigcup_{\ell \in \mathscr{L}} \mathscr{F}_\ell(\hat{K}_\ell)$ classifies $\mathscr{K}(k)$.}
\\[1ex]
The complexity of Problems 1-4 depends heavily on the nature of the ground field $k$. It depends, in particular, on the cardinality $|\mathscr{Q}(k)/\hspace{-0,1cm}\simeq| = |k^\ast/k_{sq}^\ast| - 1$ 
(Proposition \ref{quadratic extensions} (v)).

The trivial case occurs when $\mathscr{Q}(k) = \emptyset$, or equivalently, when $k^\ast = k_{sq}^\ast$. Then Problems 1-4 are empty, and so is $\mathscr{C}(k)$.

The simplest non-trivial case occurs when $|\mathscr{Q}(k)/\hspace{-0,1cm}\simeq| = 1$, or equivalently, when $|k^\ast/k_{sq}^\ast| = 2$ . We express this situation by saying that $k$ has 
{\it unique quadratic extension class}. For such fields $k$, Problems 1-4 admit a further reduction, expounded in Subsection 5.2 below. On that basis, we achieve in 
Subsections 5.3 and 5.4 explicit classifications of $\mathscr{C}(k)$ for two types of ground fields $k$ having unique quadratic extension class, namely for all ordered fields $k$ in which every 
positive element is a square, and for all finite fields $k$ of odd order, respectively. 

In the arithmetic case $k = \Q$ we have that $|\mathscr{Q}(\Q)/\hspace{-0,1cm}\simeq| = \aleph_0$, and complete solutions to Problems 1-4 seem not to be known at present. Interesting partial solutions, 
with a distinct flavour of classic number theory, have recently been achieved by G.~Hammarhjelm \cite{Ha17}.

\subsection{Fields with unique quadratic extension class}

In this subsection, we study the decomposition $ \mathscr{C}(k) = \mathscr{N}(k) \amalg \mathscr{S}(k) \amalg \mathscr{K}(k)$ (Proposition \ref{decomposition}) in case $k$ has characteristic not 2 and 
unique quadratic extension class. It turns out that, for all such fields $k$, the block $\mathscr{K}(k)$ is empty, and the block $\mathscr{S}(k)$ consists of at most one isoclass
(Proposition \ref{dichotomy}). As a consequence, the problem of classifying $\mathscr{C}(k)$ is reduced to a simplified version of the above Problems 1-4, formulated below as Problems $1'$-$3'$. We begin
with two preparatory lemmas.

\begin{lem} \label{three subgroups}
Let $k \subset \ell$ be a quadratic extension in characteristic not 2. Choose $i \in {\rm Im}(\ell) \setminus \{0\}$, 
and set $i^2 = -t$. Then the following holds true.
\\[1ex]
(i) ${\rm im}(n_{\ell/k}^\ast) = \{ x^2 + ty^2\ |\ (x,y) \in k^2 \setminus \{(0,0)\} \}$.
\\[1ex]
(ii) $\ell_{sq}^\ast \cap k^\ast = k_{sq}^\ast \sqcup -tk_{sq}^\ast$.
\\[1ex]
(iii) $k_{sq}^\ast < {\rm im}(n_{\ell/k}^\ast) < k^\ast$.
\\[1ex]
(iv) $k_{sq}^\ast < \ell_{sq}^\ast \cap k^\ast < k^\ast$ and $\left| (\ell_{sq}^\ast \cap k^\ast)/k_{sq}^\ast \right| = 2$.
\end{lem}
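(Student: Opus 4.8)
The plan is to work throughout in the coordinates furnished by the decomposition $\ell = k \oplus {\rm Im}(\ell) = k \oplus ki$ from Subsection 2.2, writing each $z \in \ell$ as $z = x + iy$ with $x,y \in k$, and then to read off all four assertions from the two elementary computations of $n_{\ell/k}(z)$ and of $z^2$. Before doing so I would record the two facts that drive the whole argument: since $i \in {\rm Im}(\ell) \setminus \{0\}$ we have $\overline{i} = -i$ and $i \notin k$, so $i^2 = -t \in k^\ast$ is a non-square in $k$. Indeed, were $-t = s^2$ with $s \in k$, then $(i-s)(i+s) = 0$ in the field $\ell$ would force $i = \pm s \in k$, a contradiction. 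This non-squareness of $-t$ is precisely what makes the relevant norm values nonzero and the relevant cosets distinct.

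For part (i), I would compute $n_{\ell/k}(x+iy) = (x+iy)(x-iy) = x^2 - i^2 y^2 = x^2 + t y^2$, using $\overline{x+iy} = x - iy$. As $n_{\ell/k}^\ast$ is the restriction of $n_{\ell/k}$ to $\ell^\ast$, its image consists of these values as $(x,y)$ ranges over $k^2 \setminus \{(0,0)\}$; here I would invoke $-t \notin k_{sq}^\ast$ to guarantee $x^2 + t y^2 \neq 0$ whenever $(x,y) \neq (0,0)$, so that all these values genuinely lie in $k^\ast$.

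For part (ii), the key computation is $z^2 = (x+iy)^2 = (x^2 - t y^2) + 2xy\, i$. Since ${\rm char}(k) \neq 2$, this lies in $k$ precisely when $xy = 0$, that is, when $y = 0$ (yielding the squares $x^2 \in k_{sq}^\ast$) or $x = 0$ (yielding $-t y^2 \in -t k_{sq}^\ast$). Hence $\ell_{sq}^\ast \cap k^\ast = k_{sq}^\ast \cup (-t k_{sq}^\ast)$, and the union is disjoint because a common element $u^2 = -t v^2$ would again exhibit $-t = (u/v)^2$ as a square.

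Parts (iii) and (iv) are then formal consequences. For (iii), ${\rm im}(n_{\ell/k}^\ast)$ is a subgroup of $k^\ast$ as the image of the group morphism $n_{\ell/k}^\ast$, and $k_{sq}^\ast \subset {\rm im}(n_{\ell/k}^\ast)$ because $a^2 = n_{\ell/k}(a)$ for every $a \in k^\ast \subset \ell^\ast$. For (iv), $\ell_{sq}^\ast \cap k^\ast$ is a subgroup of $k^\ast$, being the intersection of the subgroups $\ell_{sq}^\ast < \ell^\ast$ and $k^\ast < \ell^\ast$; it contains $k_{sq}^\ast$ since every square in $k$ is a square in $\ell$; and by (ii) it is the disjoint union of exactly the two $k_{sq}^\ast$-cosets $k_{sq}^\ast$ and $-t k_{sq}^\ast$, so the quotient $(\ell_{sq}^\ast \cap k^\ast)/k_{sq}^\ast$ has order $2$. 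I expect no genuine obstacle in this lemma; the only two points that require care — and the ones I would state explicitly rather than leave implicit — are the two appeals to the non-squareness of $-t$, which secure both the non-vanishing in (i) and the disjointness underlying (ii) and (iv).
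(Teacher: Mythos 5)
Your proof is correct and follows essentially the same route as the paper's: write $a = x + iy$ with $(x,y) \in k^2$, compute $a\overline{a} = x^2 + ty^2$ and $a^2 = (x^2 - ty^2) + 2xy\,i$, deduce (i) and (ii) using ${\rm char}(k) \neq 2$ and $-t \notin k_{sq}^\ast$, and obtain (iii) and (iv) as formal consequences (the paper gets (iii) by setting $y=0$ in (i), exactly as you do via $n_{\ell/k}(a) = a^2$ for $a \in k^\ast$). Your only departure is that you spell out two points the paper leaves implicit --- that $-t$ is indeed a non-square and that the values $x^2 + ty^2$ in (i) are nonzero --- which is a harmless and welcome extra precision.
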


\begin{proof}
For every $a \in \ell^\ast$ there is a unique pair $(x,y) \in k^2 \setminus \{(0,0)\}$, such that $a = x + iy$. Now 
$\overline{a} = x - iy$ implies $a\overline{a} = x^2 + ty^2$, which proves (i). Also, $a^2 \in k^\ast$ if and only if 
$xy = 0$, and in that case $a^2 = x^2$ or $a^2 = -ty^2$. Since $-t \not\in k_{sq}^\ast$, this proves (ii). Setting 
$y = 0$ in (i) one obtains (iii),\vspace{0,1cm} while (ii) implies (iv). 
\end{proof}

\begin{lem} \label{norm and squares}
Let $k$ be a field of characteristic not 2 that has unique quadratic extension class, represented by $\ell$. Then the 
following holds true.
\\[1ex]
(i) If $n_{\ell/k}^\ast: \ell^\ast \to k^\ast$ is not surjective, then ${\rm im}(n_{\ell/k}^\ast) = 
k_{sq}^\ast$.
\\[1ex]
(ii) $\ell_{sq}^\ast \cap k^\ast = k^\ast$.
\end{lem}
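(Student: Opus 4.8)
The plan is to derive both parts purely from Lemma \ref{three subgroups}, which already contains all the substantive computations, combined with the single numerical constraint imposed by the hypothesis. First I would translate the assumption that $k$ has unique quadratic extension class, via Proposition \ref{quadratic extensions} (v), into the statement $|k^\ast/k_{sq}^\ast| = 2$. Since $k_{sq}^\ast$ is the image of the square endomorphism $s_k^\ast: k^\ast \to k^\ast$ and hence a subgroup of $k^\ast$, the quotient $k^\ast/k_{sq}^\ast$ is a group of order $2$. By the correspondence theorem it has no proper nontrivial subgroups, so the only subgroups of $k^\ast$ that contain $k_{sq}^\ast$ are $k_{sq}^\ast$ itself and $k^\ast$. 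This dichotomy is the one observation that drives the whole argument.

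For part (i), I would invoke Lemma \ref{three subgroups} (iii), which exhibits ${\rm im}(n_{\ell/k}^\ast)$ as a subgroup of $k^\ast$ containing $k_{sq}^\ast$. By the dichotomy just established, ${\rm im}(n_{\ell/k}^\ast)$ equals either $k_{sq}^\ast$ or $k^\ast$. The hypothesis that $n_{\ell/k}^\ast$ fails to be surjective rules out the second alternative, leaving ${\rm im}(n_{\ell/k}^\ast) = k_{sq}^\ast$.

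For part (ii), I would appeal to Lemma \ref{three subgroups} (iv), which shows that $\ell_{sq}^\ast \cap k^\ast$ is a subgroup of $k^\ast$ containing $k_{sq}^\ast$ with $\left|(\ell_{sq}^\ast \cap k^\ast)/k_{sq}^\ast\right| = 2$. In particular $\ell_{sq}^\ast \cap k^\ast$ strictly contains $k_{sq}^\ast$, so the same dichotomy forces $\ell_{sq}^\ast \cap k^\ast = k^\ast$. (Equivalently, one may invoke multiplicativity of subgroup indices: $2 = [k^\ast:k_{sq}^\ast] = [k^\ast : \ell_{sq}^\ast \cap k^\ast]\cdot[\ell_{sq}^\ast \cap k^\ast : k_{sq}^\ast] = [k^\ast : \ell_{sq}^\ast \cap k^\ast]\cdot 2$, so the first factor is $1$.)

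I do not anticipate a genuine obstacle here, since the explicit descriptions of ${\rm im}(n_{\ell/k}^\ast)$ and $\ell_{sq}^\ast \cap k^\ast$, together with the requisite index computations, have already been carried out in Lemma \ref{three subgroups}. The only point requiring care is the bookkeeping of the hypothesis, namely ensuring that \emph{unique quadratic extension class} is faithfully recorded as the order-$2$ condition on $k^\ast/k_{sq}^\ast$; once that is in place, both conclusions follow at once from the subgroup-lattice dichotomy.
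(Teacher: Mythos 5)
Your proof is correct and takes essentially the same route as the paper: the paper's proof is the single remark that, since $|k^\ast/k_{sq}^\ast| = 2$ by hypothesis, both (i) and (ii) ``follow directly from Lemma \ref{three subgroups} (iii) and (iv).'' You have simply made explicit the subgroup-lattice dichotomy and index bookkeeping that the paper leaves implicit, all of which is accurate (including using the index-$2$ statement in Lemma \ref{three subgroups} (iv) to get strict containment, since the paper's $H < G$ only means subgroup).
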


\begin{proof}
As $|k^\ast/k_{sq}^\ast| = 2$ holds by hypothesis, (i) and (ii) follow directly from Lemma \ref{three subgroups} (iii) and (iv).
\end{proof}

\begin{pro} \label{dichotomy}
Let $k$ be a field of characteristic not 2 that has unique quadratic extension class, represented by $\ell$. Then the 
following holds true.
\\[1ex]
(i)\hspace{0,1cm} $\mathscr{K}(k) = \emptyset$.
\\[1ex]
(ii) If $n_{\ell/k}^\ast$ is not surjective, then $\mathscr{S}(k)$ consists of precisely one isoclass, represented by $A(\ell, (1,0,c_3))$ for any $c_3 \in k^\ast \setminus {\rm im}(n_{\ell/k}^\ast)$.
\\[1ex]
(iii) If $n_{\ell/k}^\ast$ is surjective, then $\mathscr{S}(k) = \emptyset$. Moreover, 
\[ C_\ell = C_\ell^\mathscr{N} \subset \{ \underline{c} \in k^3\ |\ c_2 \not= 0 \}. \]
\end{pro}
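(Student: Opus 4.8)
The plan is to treat the three statements as instances of the one-parameter classification results already established, reserving a direct isotropy computation only for the last claim. Since $k$ has unique quadratic extension class, a transversal $\mathscr{L}$ for $\mathscr{Q}(k)/\hspace{-0,1cm}\simeq$ is the singleton $\{\ell\}$, so the coverings of Proposition \ref{L-coverings} collapse to the identities $\mathscr{K}(k) = \mathscr{K}_\ell$ and $\mathscr{S}(k) = \mathscr{S}_\ell$. Thus each of (i), (ii) and the first assertion of (iii) reduces to computing one punctured quotient group of $k^\ast$ and feeding it into Corollary \ref{one-parameter classifications}.

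For (i) I would note that $\mathscr{K}_\ell$ is classified by a transversal for $\left( k^\ast/(\ell_{sq}^\ast \cap k^\ast) \right)^\circ$. By Lemma \ref{norm and squares} (ii) we have $\ell_{sq}^\ast \cap k^\ast = k^\ast$, so this quotient group is trivial and its punctured version is empty; hence $\mathscr{K}(k) = \mathscr{K}_\ell = \emptyset$. For (ii), non-surjectivity of $n_{\ell/k}^\ast$ gives ${\rm im}(n_{\ell/k}^\ast) = k_{sq}^\ast$ by Lemma \ref{norm and squares} (i), whence $k^\ast/{\rm im}(n_{\ell/k}^\ast) = k^\ast/k_{sq}^\ast$ has order $2$; its punctured quotient is a single class, so Corollary \ref{one-parameter classifications} (i) yields exactly one isoclass in $\mathscr{S}(k)$, represented by $A(\ell,(1,0,c_3))$ for any $c_3 \in k^\ast \setminus {\rm im}(n_{\ell/k}^\ast)$. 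The first part of (iii) is the degenerate counterpart: surjectivity makes $k^\ast \setminus {\rm im}(n_{\ell/k}^\ast)$ empty, so $C_\ell^\mathscr{S} = \emptyset$ by Proposition \ref{transversals of punctured quotient groups} (i), and thus $\mathscr{S}(k) = \emptyset$. Combining $C_\ell^\mathscr{S} = \emptyset$ with $C_\ell^\mathscr{K} = \emptyset$ (which holds by Proposition \ref{transversals of punctured quotient groups} (iii) and Lemma \ref{norm and squares} (ii)) and the partition $C_\ell = C_\ell^\mathscr{N} \sqcup C_\ell^\mathscr{S} \sqcup C_\ell^\mathscr{K}$ gives the identity $C_\ell = C_\ell^\mathscr{N}$.

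The genuine work lies in the inclusion $C_\ell \subset \{ \underline c \in k^3\ |\ c_2 \not= 0 \}$, and this is where I expect the main obstacle. I would argue by contraposition: assuming $c_2 = 0$, I exhibit a nontrivial zero of $q_{\underline c}$, contradicting $\ell$-admissibility. Fix $i \in {\rm Im}(\ell) \setminus \{0\}$ with $i^2 = -t$. If $c_3 = 0$, then $q_{\underline c}(0,1) = 0$ immediately. If $c_3 \not= 0$, I compute $q_{\underline c}(i,0) = t(2c_1 - 1) =: r \in k$ and observe that $q_{\underline c}(i,y) = r - c_3\, y\overline{y}$ for all $y \in \ell$. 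When $r = 0$ the pair $(i,0)$ is already a zero; when $r \not= 0$, surjectivity of $n_{\ell/k}^\ast$ supplies $y \in \ell^\ast$ with $y\overline{y} = r/c_3 \in k^\ast$, making $(i,y)$ a nontrivial zero. In every case $q_{\underline c}$ is isotropic, so $\underline c \notin C_\ell$, establishing the inclusion. The crux is exactly this invocation of surjectivity to solve $y\overline{y} = r/c_3$; without it, namely in the situation of (ii), such triples can remain admissible, which is why $c_2 = 0$ survives there but is excluded here.
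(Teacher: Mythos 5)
Your proposal is correct and follows essentially the same route as the paper: parts (i), (ii) and the emptiness of $\mathscr{S}(k)$ in (iii) are obtained, exactly as in the paper, from Lemma \ref{norm and squares}, Proposition \ref{L-coverings} and Corollary \ref{one-parameter classifications} (your detour through Proposition \ref{transversals of punctured quotient groups} and the triple-level partition $C_\ell = C_\ell^{\mathscr{N}} \sqcup C_\ell^{\mathscr{S}} \sqcup C_\ell^{\mathscr{K}}$ is an equivalent reformulation), and the inclusion $C_\ell \subset \{\underline{c} \in k^3\ |\ c_2 \not= 0\}$ is proved by the same key idea of using surjectivity of $n_{\ell/k}^\ast$ to manufacture an isotropic vector for $q_{\underline{c}}$. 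The only difference is cosmetic: the paper evaluates at $x = 1$, where $(1-c_1)+c_1 = 1$ makes the $c_1$-dependence cancel, so that $q_{\underline{c}}(1,\eta) = 1 - c_3\eta\overline{\eta}$ vanishes for $\eta\overline{\eta} = c_3^{-1}$ with no case distinction, whereas your choice $x = i$ forces the (correctly handled) extra split on whether $r = t(2c_1-1)$ is zero.
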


\begin{proof}
{\it (i)} Lemma \ref{norm and squares} (ii) asserts that $(k^\ast/(\ell_{sq}^\ast \cap k^\ast))^\circ = \emptyset$. We conclude with Proposition \ref{L-coverings} and 
Corollary \ref{one-parameter classifications} (ii) that $\mathscr{K}(k) = \mathscr{K}_\ell(k) = \emptyset$.
\\[1ex]
{\it (ii)} Lemma \ref{norm and squares} (i) implies that $|(k^\ast/{\rm im}(n_{\ell/k}^\ast))^\circ| = |(k^\ast/k_{sq}^\ast)^\circ| = 1$. Hence every $c_3 \in k^\ast \setminus {\rm im}(n_{\ell/k}^\ast)$ 
provides a transversal $S_\ell = \{c_3\}$ for $(k^\ast/{\rm im}(n_{\ell/k}^\ast))^\circ$. We conclude with Corollary \ref{one-parameter classifications} (i) and Proposition \ref{L-coverings} that
$ \{ A(\ell, (1,0,c_3)) \}$ classifies $\mathscr{S}_\ell(k) = \mathscr{S}(k)$.
\\[1ex] 
{\it (iii)} By hypothesis, $(k^\ast/{\rm im}(n_{\ell/k}^\ast))^\circ = \emptyset$. We conclude with Proposition \ref{L-coverings} and Corollary \ref{one-parameter classifications} (i) that
$\mathscr{S}(k) = \mathscr{S}_\ell(k) = \emptyset$. With (i) and Proposition \ref{decomposition}, the identities $\mathscr{C}(k) = \mathscr{N}(k)$ and $C_\ell = C_\ell^\mathscr{N}$ follow. Assume that $c_2 = 0$ 
for some $\underline{c} \in C_\ell$. Then $\underline{c} = (c_1, 0, c_3) \in k^3$, such that the associated function
\[ q_{\underline{c}}: \ell^2 \to \ell,\ q_{\underline{c}}(x,y) = (1-c_1)x^2 + c_1 x\overline{x} - c_3 y\overline{y} \]
is anisotropic. So $c_3 = -q_{\underline{c}}(0,1) \in k^\ast$. As $n_{\ell/k}^\ast$ is surjective, there exists an element $\eta \in \ell^\ast$ such that $\eta\overline{\eta} = c_3^{-1}$. Then
$q_{\underline{c}}(1,\eta) = 0$,\vspace{0,1cm} which contradicts the anisotropy of $q_{\underline{c}}$. Thus $c_2 \not= 0$ for all $\underline{c} \in C_\ell$.
\end{proof}

\noindent
Proposition \ref{dichotomy}, combined with Proposition \ref{L-coverings} and Theorem \ref{reduction of classifications} (ii), yields the following corollary.

\begin{cor} \label{uqec classification}
Let $k$ be a field of characteristic not 2 that has unique quadratic extension class, represented by $\ell$, and let $T_\ell^{\mathscr{N}}$ be a transversal for $C_\ell^{\mathscr{N}}/\ell^\ast$. 
\\[1ex]
(i) If $n_{\ell/k}^\ast$ is not surjective and $c_3 \in k^\ast \setminus {\rm im}(n_{\ell/k}^\ast)$, then 
\[ \mathscr{F}_\ell(T_\ell^{\mathscr{N}}) \sqcup \{ A(\ell,(1,0,c_3)) \}\ \mbox{classifies}\ \mathscr{C}(k) = \mathscr{N}(k) \amalg \mathscr{S}(k). \]
(ii) If $n_{\ell/k}^\ast$ is surjective, then
\[ \mathscr{F}_\ell(T_\ell^{\mathscr{N}})\ \mbox{classifies}\ \mathscr{C}(k) = \mathscr{N}(k). \]
\end{cor}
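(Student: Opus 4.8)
The plan is to assemble the corollary directly from the decomposition of $\mathscr{C}(k)$, the $\mathscr{L}$-covering of its blocks, and the dichotomy of Proposition \ref{dichotomy}; no new computation is needed. The governing observation is that, by hypothesis, $|k^\ast/k_{sq}^\ast| = 2$, so $|\mathscr{Q}(k)/\hspace{-0,1cm}\simeq| = 1$ by Proposition \ref{quadratic extensions} (v), and we may take the transversal $\mathscr{L}$ for $\mathscr{Q}(k)/\hspace{-0,1cm}\simeq$ to be the singleton $\{\ell\}$. Consequently every Kleinian subfield occurring in $\mathscr{C}(k)$ is isomorphic to $\ell$.

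First I would treat the non-associative block. Because $\mathscr{L} = \{\ell\}$, the covering $\mathscr{N}(k) = \coprod_{\ell' \in \mathscr{L}} \mathscr{N}_{\ell'}(k)$ of Proposition \ref{L-coverings} collapses to the identity $\mathscr{N}(k) = \mathscr{N}_\ell(k)$. Theorem \ref{reduction of classifications} (ii) then translates the hypothesis that $T_\ell^{\mathscr{N}}$ is a transversal for $C_\ell^{\mathscr{N}}/\ell^\ast$ into the statement that $\mathscr{F}_\ell(T_\ell^{\mathscr{N}})$ classifies $\mathscr{N}_\ell(k) = \mathscr{N}(k)$. Next I would apply Proposition \ref{dichotomy} (i) to get $\mathscr{K}(k) = \emptyset$, so the decomposition of Proposition \ref{decomposition} becomes $\mathscr{C}(k) = \mathscr{N}(k) \amalg \mathscr{S}(k)$. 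The two cases then differ only in the treatment of $\mathscr{S}(k)$: in case (i) Proposition \ref{dichotomy} (ii) exhibits $\mathscr{S}(k)$ as the single isoclass of $A(\ell,(1,0,c_3))$, and in case (ii) Proposition \ref{dichotomy} (iii) gives $\mathscr{S}(k) = \emptyset$.

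It remains to paste the block transversals together along the coproduct $\mathscr{C}(k) = \mathscr{N}(k) \amalg \mathscr{S}(k)$, and this is the only point deserving a moment's care. Since $\amalg$ denotes a decomposition in the sense of Subsection 2.1, there are no morphisms between objects of distinct blocks, so the isoclasses of $\mathscr{C}(k)$ are precisely the disjoint union of the isoclasses of the two blocks; hence a disjoint union of transversals for the blocks is again a transversal for $\mathscr{C}(k)$. In case (i) this yields that $\mathscr{F}_\ell(T_\ell^{\mathscr{N}}) \sqcup \{A(\ell,(1,0,c_3))\}$ classifies $\mathscr{C}(k)$, and in case (ii), where the $\mathscr{S}$-block is empty, $\mathscr{F}_\ell(T_\ell^{\mathscr{N}})$ alone classifies $\mathscr{C}(k) = \mathscr{N}(k)$. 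There is no genuine obstacle; the corollary is pure bookkeeping on top of the cited results.
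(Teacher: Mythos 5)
Your proposal is correct and follows essentially the same route as the paper, which derives the corollary precisely from Proposition \ref{dichotomy}, Proposition \ref{L-coverings} and Theorem \ref{reduction of classifications} (ii), with $\mathscr{L} = \{\ell\}$ justified by Proposition \ref{quadratic extensions}. Your explicit remark that the coproduct $\mathscr{C}(k) = \mathscr{N}(k) \amalg \mathscr{S}(k)$ permits pasting transversals blockwise is exactly the bookkeeping the paper leaves implicit.
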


\noindent
For every field $k$ of characteristic not 2 that has unique quadratic extension class, represented by $\ell$, Corollary \ref{uqec classification} reduces the problem of classifying $\mathscr{C}(k)$ to 
the following three problems.
\\[1ex]
{\bf Problem 1$'$.} {\it Display the subset $C_\ell^\mathscr{N} \subset k^3$ explicitly.} 
\\[1ex]
{\bf Problem 2$'$.} {\it Find a transversal $T_\ell^\mathscr{N}$ for $C_\ell^\mathscr{N}/\ell^\ast$.} 
\\[1ex]
{\bf Problem 3$'$.} {\it Determine whether the group morphism $n_{\ell/k}^\ast: \ell^\ast \to k^\ast$ is surjective or not. If $n_{\ell/k}^\ast$ is not surjective, choose an element 
$c_3 \in k^\ast \setminus {\rm im}(n_{\ell/k}^\ast)$.} 
\\[1ex]
The preceding results reveal a {\it dichotomy of types}, regarding fields $k$ that have characteristic not 2 and unique quadratic extension class. The {\it first type} is characterized by 
non-surjectivity of the norm morphism $n_{\ell/k}^\ast$, or equivalently, by non-emptiness of the groupoid $\mathscr{S}(k)$, whereas the {\it second 
type} is characterized by surjectivity of $n_{\ell/k}^\ast$, or equivalently, by emptiness of $\mathscr{S}(k)$.

In the remainder of the present section we classify the groupoid $\mathscr{C}(k)$ for two classes of fields $k$, having characteristic not 2 and unique quadratic extension class. The first class is formed by 
all square-ordered fields (Subsection 5.3), and all of these are of the first type. The second class is formed by all finite fields of odd order (Subsection 5.4), and all of these are of the second type.

\subsection{Classification of $\mathscr{C}(k)$ for square-ordered fields $k$}

Recall our notation $k_{sq} = \{ x^2\ |\ x \in k \}$, valid for every field $k$. If $k$ is an ordered field, then $k$ has characteristic $0$, the set $k_{\ge 0} = \{ x \in k\ |\ x \ge 0 \}$ of all 
non-negative elements in $k$ is defined, and the inclusion $k_{sq} \subset k_{\ge 0}$ holds. By a {\it square-ordered field} we mean an ordered field $k$ such that $k_{sq} = k_{\ge 0}$. The real number 
field $\R$ and, more generally, all real closed fields $R$ are examples of square-ordered fields \cite[VI, Theorem 2.3]{Gr07}. 

In this subsection, we classify $\mathscr{C}(k)$ for all square-ordered fields $k$ (Corollary \ref{square-ordered classification}), by way of establishing that $k$ has unique quadratic extension class, 
solving Problems $1'$-$3'$, and applying Corollary \ref{uqec classification}. 

For technical reasons we introduce the partition $C_\ell^{\mathscr{N}} = C_\ell^{\mathscr{N}_0} \sqcup C_\ell^{\mathscr{N}_1}$, where $C_\ell^{\mathscr{N}_0} = \{ \underline{c} \in C_\ell^{\mathscr{N}}\ |\ c_2 = 0 \}$ and 
$C_\ell^{\mathscr{N}_1} = \{ \underline{c} \in C_\ell^{\mathscr{N}}\ |\ c_2 \not= 0 \}$. The set of all $\ell^\ast$-equivalence classes of $C_\ell^{\mathscr{N}_0}$ and $C_\ell^{\mathscr{N}_1}$ is denoted by 
$C_\ell^{\mathscr{N}_0}/\ell^\ast$ and $C_\ell^{\mathscr{N}_1}/\ell^\ast$, respectively.

\begin{pro} \label{square-ordered fields}
Let $k$ be a square-ordered field.
\\[1ex]
(i) Then $k$ has unique quadratic extension class, represented by $\ell = k\left(\sqrt{-1}~\right)$.
\\[1ex]
(ii) The map $n_{\ell/k}^\ast: \ell^\ast \to k^\ast$ is not surjective, and $-1 \in k^\ast \setminus 
{\rm im}(n_{\ell/k}^\ast)$.
\\[1ex]
(iii) The subset $C_\ell \subset k^3$ admits the explicit display
\[ C_\ell = \left\{ (c_1, c_2, c_3) \in k^3\ \left|\ c_1 > \frac{1}{2}\ \wedge\ c_3 < c_2 < -c_3 \right.
\right\}. \] 
(iv) The subsets $C_\ell^{\mathscr{N}_0} \subset C_\ell^{\mathscr{N}}$ and $C_\ell^{\mathscr{N}_1} \subset C_\ell^{\mathscr{N}}$ admit the explicit displays
\begin{eqnarray*} 
C_\ell^{\mathscr{N}_0} & = & \left\{ (c_1, 0, c_3) \in k^3\ \left|\ c_1 > \frac{1}{2}\ \wedge\ c_3 < 0\ 
\wedge\ c_1 \not= 1 \right. \right\},\ \mbox{and} 
\\[1ex]
C_\ell^{\mathscr{N}_1} & = & \left\{ (c_1, c_2, c_3) \in k^3\ \left|\ c_1 > \frac{1}{2}\ \wedge\ c_3 < c_2 < 
-c_3\ \wedge\ c_2 \not= 0 \right. \right\}.
\end{eqnarray*}  
(v) The subsets
\begin{eqnarray*} 
T_\ell^{\mathscr{N}_0} & = & \left\{  (c_1, 0, -1) \in k^3\ \left|\ c_1 > \frac{1}{2}\ \wedge\ c_1 \not= 1 \right. \right\}\ \subset\ C_\ell^{\mathscr{N}_0}\ \mbox{and}
\\[1ex]
T_\ell^{\mathscr{N}_1} & = & \left\{  (c_1, 1, c_3) \in k^3\ \left|\ c_1 > \frac{1}{2}\ \wedge\ c_3 < -1 \right. \right\}\ \subset\ C_\ell^{\mathscr{N}_1}
\end{eqnarray*}  
are transversals for $C_\ell^{\mathscr{N}_0}/\ell^\ast$ and $C_\ell^{\mathscr{N}_1}/\ell^\ast$, respectively.
\\[1ex]
(vi) The set $T_\ell^{\mathscr{N}} = T_\ell^{\mathscr{N}_0} \sqcup T_\ell^{\mathscr{N}_1}$ is a transversal for $C_\ell^{\mathscr{N}}/\ell^\ast$.
\end{pro}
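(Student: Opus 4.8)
The plan is to establish (i)--(ii) from the order structure, to prove (iii) by passing to real coordinates and invoking definiteness over a square-ordered field, and then to deduce (iv)--(vi) from (iii) together with the structural results of Sections~3--5.

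For (i), since $k$ is ordered with $k_{sq}=k_{\ge 0}$, every nonzero element is either positive (hence a nonzero square) or negative (hence $-1$ times a square), so $k^\ast = k_{sq}^\ast \sqcup (-1)k_{sq}^\ast$ and $|k^\ast/k_{sq}^\ast|=2$ with $-1$ representing the nontrivial coset; Proposition~\ref{quadratic extensions} then yields unique quadratic extension class, represented by $\ell=k(\sqrt{-1}~)$. For (ii), I would take $i=\sqrt{-1}$, so $t=1$ in Lemma~\ref{three subgroups}, giving ${\rm im}(n_{\ell/k}^\ast)=\{x^2+y^2\mid (x,y)\neq(0,0)\}$, which in a square-ordered field is exactly the set of positive elements, i.e.\ $k_{sq}^\ast$; since $-1<0$ is not a square it is not a norm, so $n_{\ell/k}^\ast$ is not surjective (this also matches Lemma~\ref{norm and squares}).

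The core is (iii). I would write $x=a+bi$, $y=p+qi$ with $a,b,p,q\in k$ and split $q_{\underline c}(x,y)$ into its real and imaginary parts over $k$. A direct computation gives real part $\rho(a,b,p,q)=a^2+(2c_1-1)b^2-(c_2+c_3)p^2+(c_2-c_3)q^2$ and imaginary part $2(1-c_1)ab-2c_2pq$. The key claim is that $q_{\underline c}$ is anisotropic if and only if the diagonal quaternary $k$-form $\rho$ is anisotropic. The implication ``$\rho$ anisotropic $\Rightarrow q_{\underline c}$ anisotropic'' is immediate, since any zero of $q_{\underline c}$ is in particular a zero of its real part $\rho$. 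For the converse I would use that, over a square-ordered field, an isotropic diagonal form has a nonzero zero supported on at most two coordinates, and a short case check shows such a zero can always be placed on one of the coordinate $2$-planes pairing a real coordinate of $x$ with one of $y$ (planes $b=q=0$, $b=p=0$, $a=q=0$, $a=p=0$), on each of which the imaginary part vanishes identically; hence it is a genuine zero of $q_{\underline c}$. Over a square-ordered field a diagonal form is anisotropic exactly when it is definite, i.e.\ all its coefficients share one sign; as the coefficient of $a^2$ is $1$, this forces all four coefficients to be positive, which is precisely $c_1>\tfrac12$, $c_2+c_3<0$ and $c_2-c_3>0$, i.e.\ $c_1>\tfrac12$ and $c_3<c_2<-c_3$. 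This equivalence, and especially the case check ensuring the isotropic vector lies on a plane where the imaginary part dies, is the main obstacle.

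For (iv), Proposition~\ref{dichotomy}(i) gives $\mathscr{K}(k)=\emptyset$, while $C_\ell^{\mathscr{S}}=\{(1,0,c_3)\mid c_3\in k^\ast\setminus{\rm im}(n_{\ell/k}^\ast)\}$ (Proposition~\ref{transversals of punctured quotient groups}(i)) reduces by (ii) to $\{(1,0,c_3)\mid c_3<0\}$; thus $C_\ell^{\mathscr{N}}=C_\ell\setminus C_\ell^{\mathscr{S}}$, and intersecting the display from (iii) with $c_2=0$ (and deleting the skew-field triples, whence $c_1\neq 1$) gives $C_\ell^{\mathscr{N}_0}$, while $c_2\neq 0$ gives $C_\ell^{\mathscr{N}_1}$. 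For (v) I would make $\ell^\ast$-equivalence explicit through $\ell^\ast(\underline c,\underline d)$: here $c_1$ is invariant, and $d_2=c_2/a^2$ with $c_2,d_2\in k$ forces $a^2\in k^\ast$, i.e.\ $a\in\A^\ast$ is real or purely imaginary, so $a^2=\pm a\overline a$. On $C_\ell^{\mathscr{N}_0}$ the ratio $c_3/d_3$ is positive, hence always a norm, so $c_1$ is a complete invariant and $c_3$ normalises to $-1$. On $C_\ell^{\mathscr{N}_1}$ one normalises $c_2$ to $1$ by taking $a=\sqrt{c_2}$ if $c_2>0$ and $a=\sqrt{-c_2}\,i$ if $c_2<0$, whereupon $c_3\mapsto c_3/|c_2|$, and $c_3<-|c_2|$ (from $c_3<c_2<-c_3$) forces $c_3/|c_2|<-1$; irredundancy follows since $a^2=1$ gives $a=\pm1$ and $a\overline a=1$. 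Finally (vi) is immediate: the partition $C_\ell^{\mathscr{N}}=C_\ell^{\mathscr{N}_0}\sqcup C_\ell^{\mathscr{N}_1}$ is $\ell^\ast$-invariant because $d_2=c_2/a^2$ preserves the vanishing of the middle coordinate, so the union of the two transversals from (v) is a transversal for $C_\ell^{\mathscr{N}}/\ell^\ast$.
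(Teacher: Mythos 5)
Your proposal is correct and takes essentially the same route as the paper: (i)--(ii) are proved identically, (iv)--(vi) use the same normalizations $c_3=-a\overline{a}$ and $c_2=a^2$ together with the $\ell^\ast$-invariance of the partition, and your treatment of (iii) is the paper's argument in a more systematized packaging. Indeed, the paper's sufficiency direction is exactly your observation that the real part $\rho=q_{\underline{c}}^1$ is positive definite when $c_1>\frac{1}{2}$ and $c_3<c_2<-c_3$, while its necessity direction tests the vectors $(\xi,1)$, $(\xi,i)$, $(i,\xi)$, which are precisely isotropic vectors of $\rho$ supported on your coordinate $2$-planes on which the imaginary part $2(1-c_1)ab-2c_2pq$ vanishes, so your case check (which does go through, since the coefficient of $a^2$ is $1>0$) replaces the paper's explicit contradictions.
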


\begin{proof}
{\it (i)} Being a square-ordered field, $k$ satisfies the identities $k_{sq}^\ast = k_{>0}$ and $k^\ast \setminus 
k_{sq}^\ast = k_{<0}$, whence $-1 \in k^\ast \setminus k_{sq}^\ast$ and $|k^\ast/k_{sq}^\ast| = 2$. Statement (i) now follows from Proposition \ref{quadratic extensions}, (i) and (v).
\\[1ex]
{\it (ii)} General elements $a \in \ell^\ast$ are of the form $a = x+iy$, where $(x,y) \in k^2 \setminus \{(0,0)\}$ and $i = \sqrt{-1}$. Thus $n_{\ell/k}^\ast(a) = a\overline{a} = x^2+y^2 \in k_{>0}$ shows 
that $n_{\ell/k}^\ast$ is not surjective, and $-1 \in k^\ast \setminus {\rm im}(n_{\ell/k}^\ast)$. 
\\[1ex]
{\it (iii)} Set $C = \left\{ \underline{c} \in k^3\ \left|\ c_1 > \frac{1}{2}\ \wedge\ c_3 < c_2 < -c_3 \right. \right\}$.\vspace{0,1cm} We aim to show that $C_\ell = C$. We refer to Subsections 2.2 and 3.1.

Let $\underline{c} \in C_\ell$. For all $(x,y) \in k^2$ we have $(\overline{x}, \overline{y}) = (x,y)$, and hence 
\[ q_{\underline{c}}(x,y) = x^2 - (c_2 + c_3)y^2. \]
If $c_2 + c_3 \ge 0$, then $c_2 + c_3 = \xi^2$ for some $\xi \in k$, whence $q_{\underline{c}}(\xi, 1) = 0$, contradicting $\underline{c} \in C_\ell$. So $c_2 + c_3 < 0$.

For all $(x,y) \in k \times {\rm Im}(\ell)$ we have $(\overline{x},\overline{y}) = (x,-y)$, and hence 
\[ q_{\underline{c}}(x,y) = x^2 - (c_2 - c_3)y^2. \]
If $-c_2+c_3 \ge 0$, then $-c_2 + c_3 = \xi^2$ for some $\xi \in k$, whence $q_{\underline{c}}(\xi, i) = 0$, contradicting $\underline{c} \in C_\ell$. So $-c_2 + c_3 < 0$.

For all $(x,y) \in {\rm Im}(\ell) \times k$ we have $(\overline{x},\overline{y}) = (-x,y)$, and hence 
\[ q_{\underline{c}}(x,y) = (1-2c_1)x^2 - (c_2 + c_3)y^2. \]
If $1-2c_1 \ge 0$, then $\frac{1-2c_1}{-(c_2 + c_3)} \ge 0$ and hence $\frac{1-2c_1}{-(c_2 + c_3)} = \xi^2$\vspace{1ex} for some $\xi \in k$, whence $q_{\underline{c}}(i, \xi) = 0$, contradicting 
$\underline{c} \in C_\ell$. So $1-2c_1 < 0$. Summarizing, we have shown that $\underline{c} \in C$.

Conversely, each $\underline{c} \in C$ determines a pair of quadratic forms
\mbox{$q_{\underline{c}}^\mu: \ell^2 \to k,$}\ $\mu \in \underline{2}$, defined by
\[ q_{\underline{c}}(x,y) = q_{\underline{c}}^1(x,y) + iq_{\underline{c}}^2(x,y). \]
Writing $(x,y) = (x_1 + ix_2, y_1 + iy_2)$ with $(x_1, x_2, y_1, y_2) \in k^4$, we obtain the identity 
\[  q_{\underline{c}}^1(x,y)  =  x_1^2 - (1-2c_1)x_2^2 - (c_2 + c_3)y_1^2 + (c_2 - c_3)y_2^2, \]
where all of the coefficients $1, -(1-2c_1), -(c_2 + c_3)$ and $c_2 - c_3$ are positive elements in $k$, by hypothesis. Thus, if $q_{\underline{c}}(x,y) = 0$, then $q_{\underline{c}}^1(x,y) = 0$ implies 
$(x,y) = (0,0)$. Accordingly, $\underline{c} \in C_\ell$.
\\[1ex]
{\it (iv)} This statement is a direct consequence of (iii), Proposition \ref{types of admissible triples} (i),\vspace{0,1cm} and the definition of the partition 
$C_\ell^{\mathscr{N}} = C_\ell^{\mathscr{N}_0} \sqcup C_\ell^{\mathscr{N}_1}$.
\\[1ex]
{\it (v)} In view of the definition of $\ell^\ast$-equivalence (Subsections 5.1 and 3.3), and because every $c_3 < 0$ can be written
$c_3 = -a\overline{a}$ for some $a \in \ell^\ast$, and every $c_2 \not= 0$ can be written $c_2 = a^2$ for some $a \in \ell^\ast$, statement (v) is an easy consequence of (iv). 
\\[1ex]
{\it (vi)} Since $C_\ell^{\mathscr{N}}/\ell^\ast = C_\ell^{\mathscr{N}_0}/\ell^\ast \sqcup C_\ell^{\mathscr{N}_1}/\ell^\ast$, (vi) follows directly from (v).
\end{proof}

\begin{cor} \label{square-ordered classification}
Let $k$ be a square-ordered field, let $\ell = k\left(\sqrt{-1}~\right)$, and let $T_\ell^{\mathscr{N}} \subset C_\ell^{\mathscr{N}}$ be the subset displayed in Proposition \ref{square-ordered fields} (v)-(vi). Then
\[ \mathscr{F}_\ell(T_\ell^{\mathscr{N}}) \sqcup \{ A(\ell,(1,0,-1)) \}\ \mbox{classifies}\ \mathscr{C}(k) = \mathscr{N}(k) \amalg \mathscr{S}(k). \]
\end{cor}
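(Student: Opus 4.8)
The plan is to recognize this corollary as the immediate specialization of Corollary \ref{uqec classification} (i) to the square-ordered case, once the hypotheses of that corollary have been verified from Proposition \ref{square-ordered fields}. First I would invoke Proposition \ref{square-ordered fields} (i), which asserts that a square-ordered field $k$ has unique quadratic extension class, represented by $\ell = k(\sqrt{-1})$. This places us precisely in the setting in which Corollary \ref{uqec classification} is formulated, with this specific $\ell$.

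Next I would determine which branch of Corollary \ref{uqec classification} applies. Proposition \ref{square-ordered fields} (ii) states that the norm morphism $n_{\ell/k}^\ast$ is not surjective and, more sharply, that $-1 \in k^\ast \setminus {\rm im}(n_{\ell/k}^\ast)$. Hence we are in case (i) of Corollary \ref{uqec classification}, and we may take the parameter to be $c_3 = -1$. This is the origin of the summand $\{ A(\ell,(1,0,-1)) \}$ in the asserted transversal, which represents the unique isoclass of the block $\mathscr{S}(k)$.

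Finally I would supply the transversal of $C_\ell^{\mathscr{N}}/\ell^\ast$ required by Corollary \ref{uqec classification}: by Proposition \ref{square-ordered fields} (v)--(vi), the explicitly displayed set $T_\ell^{\mathscr{N}} = T_\ell^{\mathscr{N}_0} \sqcup T_\ell^{\mathscr{N}_1}$ is such a transversal. Substituting $c_3 = -1$ and this $T_\ell^{\mathscr{N}}$ into Corollary \ref{uqec classification} (i) then yields verbatim the assertion that $\mathscr{F}_\ell(T_\ell^{\mathscr{N}}) \sqcup \{ A(\ell,(1,0,-1)) \}$ classifies $\mathscr{C}(k) = \mathscr{N}(k) \amalg \mathscr{S}(k)$.

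I do not expect a genuine obstacle at this stage. The entire substantive content --- the unique quadratic extension class, the non-surjectivity of the norm with the explicit witness $-1$, the explicit description of $C_\ell^{\mathscr{N}}$, and the construction of the transversals $T_\ell^{\mathscr{N}_0}$ and $T_\ell^{\mathscr{N}_1}$ --- has already been established in Proposition \ref{square-ordered fields}. The corollary is thus a bookkeeping step that merely assembles those facts through Corollary \ref{uqec classification}. The only point requiring mild care is to confirm that the choice $c_3 = -1$ employed here is the representative sanctioned by Proposition \ref{square-ordered fields} (ii), so that the central skew field summand is correctly identified.
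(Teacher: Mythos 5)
Your proposal is correct and follows essentially the same route as the paper: apply Proposition \ref{square-ordered fields} to verify unique quadratic extension class with $\ell = k(\sqrt{-1})$, non-surjectivity of $n_{\ell/k}^\ast$ with witness $-1$, and that $T_\ell^{\mathscr{N}}$ is a transversal for $C_\ell^{\mathscr{N}}/\ell^\ast$, then invoke Corollary \ref{uqec classification} (i). The paper only adds the (implicit in your argument) observation that ordered fields have characteristic $0$, so the standing hypothesis ${\rm char}(k) \not= 2$ is satisfied.
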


\begin{proof}
Every ordered field $k$ has characteristic $0$. By Proposition \ref{square-ordered fields}, every square-ordered field $k$ has unique quadratic extension class, represented \mbox{by $\ell$,} the subset 
$T_\ell^{\mathscr{N}} \subset C_\ell^{\mathscr{N}}$ is a transversal for $C_\ell^{\mathscr{N}}/\ell^\ast$, and $-1 \in k^\ast \setminus {\rm im}(n_{\ell/k}^\ast)$. The classification statement is now a special case of 
Corollary \ref{uqec classification} (i).
\end{proof}

\subsection{Classification of $\mathscr{C}(k)$ for finite fields $k$ of odd order}

Throughout this subsection, $k$ is a field of the second type, i.e.~$k$ has cha\-racteristic not 2 and unique quadratic extension class, represented by $\ell$, such that the norm morphism 
$n_{\ell/k}^\ast: \ell^\ast \to k^\ast$ is surjective. All finite fields $\F_q$ of odd order $q$ are fields of the second type. 

We study the problem of classifying $\mathscr{C}(k)$, which, by Corollary \ref{uqec classification} (ii), is reduced to Problems $1'$ and $2'$. The approach to Problems $1'$ and $2'$ which we present in the
following discussion is a generalized and streamlined version of \cite[p.~218-219]{BA15}, where the ground field is assumed to be $\F_q$, with $q$ odd.  

Let $k$ be a field of the second type, and let $k \subset \ell$ be a quadratic extension. We introduce the map
\[ |\cdot|: k \to k,\ |x| = \left\{ \begin{array}{rcl} x & \mbox{if} & x \in k_{sq} \\ -x & \mbox{if} & x \not\in k_{sq}, \end{array} \right. \]
and we note that $|x| = \sqrt{x}\ \overline{\sqrt{x}} = n_{\ell/k}(\sqrt{x})$ for all $x \in k$. Also,\vspace{0,1cm} $|\cdot|$ induces a group endomorphism $|\cdot|^\ast: k^\ast \to k^\ast$, which is 
an automorphism if and only if $-1 \in k_{sq}^\ast$.
With any pair of scalars $(a,b) \in k^2$ we associate a function 
\[ h_{a,b}: \ell^2 \to \ell,\ h_{a,b}(x,y) = x^2 - y^2 + ax\overline{x} + by\overline{y}. \] 
We say that a pair $(a,b) \in k^2$ is {\it $\ell$-admissible} if the function $h_{a,b}: \ell^2 \to \ell$ is anisotropic. The $\ell$-admissible pairs in $k^2$ form a subset 
\[ B_\ell = \left\{ (a,b) \in k^2\ \left|\ h_{a,b}^{-1}(0) = \{(0,0)\} \right. \right\} \]
of $k^2$. Since $k$ has unique quadratic extension class, the subsets $C_\ell^{{\mathscr N}} \subset k^3$ and $B_\ell \subset k^2$ are in fact uniquely determined by $k$. 

The following lemma reduces Problem $1'$ to the problem of displaying the subset $B_\ell \subset k^2$ explicitly.

\begin{lem}\label{from C to B}
Let $k$ be a field of the second type, and let $\underline{c} \in k^3$. Then $\underline{c} \in C_\ell^{\mathscr N}$ if and only if $(1-c_1)c_2 \not= 0$ and
\[ (a,b) = \left( \frac{c_1}{|1-c_1|}, \frac{-c_3}{|c_2|} \right) \in B_\ell. \]
\end{lem}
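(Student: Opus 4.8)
The plan is to unwind the definition of $C_\ell^{\mathscr N}$ via Proposition \ref{types of admissible triples} (i) and then relate the anisotropy of $q_{\underline c}$ to that of $h_{a,b}$ by an explicit change of scale. Recall that $\underline c \in C_\ell^{\mathscr N}$ means $\underline c \in C_\ell$ (i.e. $q_{\underline c}$ is anisotropic) together with $(c_1,c_2) \neq (1,0)$ and $(c_1,c_3) \neq (0,0)$. First I would dispose of the condition $(1-c_1)c_2 \neq 0$: since $k$ is of the second type, Proposition \ref{dichotomy} (iii) tells us that $C_\ell = C_\ell^{\mathscr N} \subset \{\underline c \mid c_2 \neq 0\}$, so every admissible triple automatically has $c_2 \neq 0$. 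It remains to see that $1-c_1 \neq 0$ as well; this should follow because if $c_1 = 1$ then $q_{\underline c}(x,y) = x\overline x - c_2 y^2 - c_3 y\overline y$, and one checks this is isotropic — most cleanly by noting that surjectivity of $n_{\ell/k}^\ast$ lets one solve $x\overline x = c_2 y^2 + c_3 y\overline y$ for suitable $(x,y)$ with $y \neq 0$. So on the relevant domain $(1-c_1)c_2 \neq 0$ holds and the quantities $|1-c_1|$ and $|c_2|$ appearing in the formula are nonzero.

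Assuming $(1-c_1)c_2 \neq 0$, the core of the proof is the equivalence of anisotropy. I would start from
\[ q_{\underline c}(x,y) = (1-c_1)x^2 + c_1 x\overline x - c_2 y^2 - c_3 y\overline y \]
and factor out the scalars $1-c_1$ and $-c_2$. Writing $|1-c_1| = n_{\ell/k}(\sqrt{1-c_1})$ and $|c_2| = n_{\ell/k}(\sqrt{c_2})$ as recorded just before the lemma, the key algebraic identity I expect to verify is that after substituting $x \mapsto \sqrt{1-c_1}\,x'$ and $y \mapsto \sqrt{c_2}\,y'$ (working in $\ell$, using the two square roots that exist there), the form $q_{\underline c}$ becomes a nonzero $k$-scalar multiple of
\[ h_{a,b}(x',y') = (x')^2 - (y')^2 + a\,x'\overline{x'} + b\,y'\overline{y'}, \]
with $a = c_1/|1-c_1|$ and $b = -c_3/|c_2|$. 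The point is that $(\sqrt{1-c_1}\,x')^2 = (1-c_1)(x')^2$ contributes the leading $(1-c_1)(x')^2$ term, while the mixed term $c_1 (\sqrt{1-c_1}\,x')\overline{(\sqrt{1-c_1}\,x')} = c_1 |1-c_1|\, x'\overline{x'}$ has the coefficient that produces $a$ after dividing through; the $y$-terms behave analogously and produce the $-1$ and $b$ coefficients. Since the substitution is invertible (both $\sqrt{1-c_1}$ and $\sqrt{c_2}$ are units in $\ell$), it carries the zero locus of $q_{\underline c}$ bijectively onto that of $h_{a,b}$, so one is anisotropic iff the other is, i.e. $\underline c \in C_\ell$ iff $(a,b) \in B_\ell$.

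The main obstacle I anticipate is pinning down the sign/normalization bookkeeping in the substitution, because $|1-c_1|$ equals $1-c_1$ or $-(1-c_1)$ according to whether $1-c_1$ is a square in $k$, and similarly for $|c_2|$; one must check that in each of the resulting cases the overall $k$-scalar factor relating $q_{\underline c}$ to $h_{a,b}$ is nonzero (it is, being a product of nonzero scalars) and that the coefficients of $x'\overline{x'}$ and $y'\overline{y'}$ really simplify to $a$ and $b$ as claimed. Since multiplying a function $\ell^2 \to \ell$ by a nonzero scalar preserves its zero set, the nonzero factor is harmless for the anisotropy comparison, so the sign casework affects only the identification of $a$ and $b$, not the logical equivalence. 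Finally I would combine this equivalence with the condition $(1-c_1)c_2 \neq 0$ established in the first step to read off exactly the stated biconditional characterizing membership in $C_\ell^{\mathscr N}$.
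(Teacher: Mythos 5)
Your overall strategy is exactly the paper's: first force $(1-c_1)c_2 \neq 0$ using Proposition \ref{dichotomy} (iii) together with surjectivity of $n_{\ell/k}^\ast$ (a minor point there: if $c_2+c_3 = 0$ the norm argument is not needed, since $(x,y)=(0,1)$ already kills $q_{\underline{c}}$), and then transfer anisotropy between $q_{\underline{c}}$ and $h_{a,b}$ by an invertible rescaling with the square roots $\alpha = \sqrt{1-c_1},\ \beta = \sqrt{c_2} \in \ell^\ast$ supplied by Lemma \ref{norm and squares} (ii). However, the central identity you propose to verify is false as stated. With your substitution $x \mapsto \alpha x'$, $y \mapsto \beta y'$ one gets
\[ q_{\underline{c}}(\alpha x', \beta y') = (1-c_1)^2 (x')^2 + c_1|1-c_1|\, x'\overline{x'} - c_2^2\, (y')^2 - c_3|c_2|\, y'\overline{y'} \]
(in your term-by-term account you dropped the coefficient $1-c_1$ of $x^2$, which is why you expected the leading term $(1-c_1)(x')^2$). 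This expression is a $k$-scalar multiple of $h_{a,b}(x',y')$ only when $(1-c_1)^2 = c_2^2$: the $x$-terms are scaled by $|1-c_1|^2$ and the $y$-terms by $|c_2|^2$, and these factors differ in general. So the ``nonzero overall $k$-scalar factor'' you plan to divide out simply does not exist, and the anisotropy comparison fails at this step. Your hedge about sign bookkeeping does not cover this; the problem is not signs but two incompatible scaling factors.

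The repair is immediate, and it is what the paper actually does: substitute in the other direction. One has the exact identity
\[ q_{\underline{c}}(x,y) = h_{a,b}(\alpha x, \beta y) \qquad \forall (x,y) \in \ell^2, \]
since $(\alpha x)^2 = (1-c_1)x^2$, $a(\alpha x)\overline{(\alpha x)} = a|1-c_1|\,x\overline{x} = c_1 x\overline{x}$, and analogously $-(\beta y)^2 = -c_2 y^2$ and $b(\beta y)\overline{(\beta y)} = -c_3 y\overline{y}$; equivalently, substituting $x \mapsto x'/\alpha$, $y \mapsto y'/\beta$ into $q_{\underline{c}}$ yields $h_{a,b}(x',y')$ on the nose, with no scalar factor at all. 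Since $(x,y) \mapsto (\alpha x, \beta y)$ is a bijection of $\ell^2$, this gives the equivalence of anisotropy, and combined with your (correct) first step and the identity $C_\ell = C_\ell^{\mathscr N}$ from Proposition \ref{dichotomy} (iii) it yields the stated biconditional --- at which point your argument coincides with the paper's proof.
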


\begin{proof}
Let $\underline{c} \in C_\ell^{\mathscr N}$. Then $c_2 \not= 0$ holds by Proposition \ref{dichotomy} (iii). Since $n_{\ell/k}^\ast$ is surjective, there exists a $\xi \in \ell$ such that 
$\xi\overline{\xi} = c_2+c_3$. If $1-c_1 = 0$, then $q_{\underline{c}}(\xi,1) = 0$ contradicts the anisotropy of $q_{\underline{c}}$. Hence $1-c_1 \not= 0$.\vspace{0,1cm} By Lemma \ref{norm and squares} (ii) 
there exist $\alpha, \beta \in \ell^\ast$ such that $(\alpha^2,\beta^2) = (1-c_1, c_2)$.\vspace{0,1cm} Then $q_{\underline{c}}(x,y) = h_{a,b}(\alpha x, \beta y)$ holds for all $(x,y) \in \ell^2$. 
Since $q_{\underline{c}}$ is anisotropic, it follows that so is $h_{a,b}$. Thus $(a,b) \in B_\ell$. 

Conversely, let $\underline{c} \in k^3$ be such that $(1-c_1)c_2 \not= 0$ and $(a,b) \in B_\ell$. Rever\-sing the last argument in the above reasoning we find that $q_{\underline{c}}$ is anisotropic, 
i.e.~$\underline{c} \in  C_\ell$, and $C_\ell = C_\ell^{\mathscr N}$ holds by Proposition \ref{dichotomy} (iii).
\end{proof}

\noindent
The next proposition aims at an explicit display of the subset $B_\ell \subset k^2$. The proof presented here generalizes a line of arguments, contained in \cite[Proof of Proposition 1]{BA15},
from finite fields $\F_q$ to fields $k$ of the second type.

\begin{pro} \label{B}
Let $k$ be a field of the second type, with associated subsets $M_1 = \{ m \in k\ |\ m^2 - 1 \in k_{sq} \}$ and $M_2 = \{ m \in k\ |\ m^2 - 1 \not\in k_{sq}^\ast \}$ of $k$. Then, for all $(a,b) \in k^2$, the 
following holds true.
\\[1ex]
(i) If $b = -a$, then $(a,b) \not\in B_\ell$.
\\[1ex]
(ii) If $b = a$, then $(a,b) \in B_\ell$ if and only if $1-a^2 \not\in k_{sq}$.
\\[1ex]
(iii) If $b^2 \not= a^2$, then $(a,b) \in B_\ell$ if and only if 
\[ (a+b)m_1 \not= 2 + (a-b)m_2\hspace{0,5cm} \forall (m_1,m_2) \in M_1 \times M_2. \]
\end{pro}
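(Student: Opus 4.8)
The plan is to show that $(a,b)$ fails to lie in $B_\ell$ exactly when $h_{a,b}$ acquires a non-trivial zero, and to convert the existence of such a zero into the stated arithmetic condition by means of the decomposition $\ell = k \oplus {\rm Im}(\ell)$ and the unit sphere $\S$. Throughout I fix $i \in {\rm Im}(\ell)\setminus\{0\}$ with $i^2 = -t$; since $k$ has unique quadratic extension class, $-t$ represents the non-trivial square class, so $k^\ast = k_{sq}^\ast \sqcup (-t)k_{sq}^\ast$ and a square-class calculus is available. Writing $x = x_1 + ix_2$ and $y = y_1 + iy_2$, the equation $h_{a,b}(x,y)=0$ splits into the pair $x_1x_2 = y_1y_2$ and $(1+a)x_1^2 + (a-1)tx_2^2 + (b-1)y_1^2 + (b+1)ty_2^2 = 0$, so that $(a,b)\notin B_\ell$ iff this real system has a non-zero solution in $k^4$.

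For (i), when $b=-a$ one has $h_{a,-a}(x,x) = (a+b)\,x\overline{x} = 0$ for every $x$, so any $x\ne 0$ furnishes a zero and $(a,b)\notin B_\ell$. For (ii), when $b=a$ I would first restrict to the stratum $x_2 = y_2 = 0$, where the system collapses to $(1+a)x_1^2 = (1-a)y_1^2$; this has a non-trivial solution iff $(1-a)(1+a) = 1-a^2 \in k_{sq}$, which already gives one implication. The reverse implication (anisotropy when $1-a^2\notin k_{sq}$) I would recover as the boundary case $a=b$ of the general analysis below.

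For (iii), assume $a^2\ne b^2$, so $a+b\ne 0\ne a-b$, and seek a zero with $x,y\ne 0$. Put $\rho = n_{\ell/k}(x)/n_{\ell/k}(y)\in k^\ast$, $\eta = y/\overline{y}\in\S$ and $z = x/y$, so $z\overline{z}=\rho$; dividing $h_{a,b}(x,y)=0$ by $y^2$ rewrites it as $z^2 = 1 - (b+a\rho)\overline{\eta}$. Applying $n_{\ell/k}$ and using $\eta\overline\eta = 1$ gives $\rho^2 = 1 - 2(b+a\rho)m_2 + (b+a\rho)^2$, where $m_2 = \tfrac12(\eta+\overline\eta)$. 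The conceptual heart is that $\{\tfrac12(\eta+\overline\eta)\mid \eta\in\S\} = M_2$ and $\{\tfrac12(u+u^{-1})\mid u\in k^\ast\} = M_1$, since $m\in M_2$ (resp. $m\in M_1$) encodes precisely that the resolvent $w^2-2mw+1$ has its roots in $\S$ (resp. in $k^\ast$); thus an $\eta\in\S$ with prescribed trace $2m_2$ exists iff $m_2\in M_2$. Finally, the requirement that $z$ actually exist, i.e. that $W = 1-(b+a\rho)\overline{\eta}$ be the square of an element of norm $\rho$, reduces after simplification to the single square-class condition $(\rho+1)^2 - (b+a\rho)^2 \in k_{sq}^\ast$; setting $m_1 = \tfrac{2+(a-b)m_2}{a+b}$, this is equivalent to $m_1\in M_1$, and the defining relation for $m_1$ is exactly $(a+b)m_1 = 2 + (a-b)m_2$. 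Eliminating $\rho$ then turns ``no zero exists'' into ``$(a+b)m_1 \ne 2 + (a-b)m_2$ for all $(m_1,m_2)\in M_1\times M_2$'', as claimed.

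The main obstacle lies in the last two steps of (iii): carrying out the square-class bookkeeping that converts the realizability of $z$ into the clean condition $m_1\in M_1$ (this is exactly where the non-square representative $-t$ and the identity $n_{\ell/k}(\sqrt{x}) = |x|$ do the work), and verifying that the one-parameter family swept out by $\rho$ meets every admissible pair $(m_1,m_2)$, together with a separate check that the degenerate strata (one of $x,y$ zero, or a vanishing real or imaginary component) produce no zeros beyond those already accounted for. These verifications are elementary but must be organized with care, and they constitute the counterpart, over a general field of the second type, of the counting arguments used for $\F_q$ in \cite{BA15}.
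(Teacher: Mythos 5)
Your architecture is partly sound, and where it is sound it runs parallel to the paper: part (i) is the paper's own observation ($h_{a,-a}$ vanishes on the diagonal); your identification $\{\frac{1}{2}(\eta+\overline{\eta})\mid \eta\in\S\}=M_2$ via the resolvent $w^2-2mw+1$ and the unique quadratic extension class is correct and is exactly the paper's key lemma (proved there via Hilbert 90 and explicit coordinates on $\S$); and your realizability analysis is essentially right, in the sense that, given the norm compatibility $\rho^2=n_{\ell/k}(W)$ with $W=1-(b+a\rho)\overline{\eta}$, a square root $z$ of $W$ with $n_{\ell/k}(z)=\rho$ exists precisely when $(\rho+1)^2-(b+a\rho)^2=(z+\overline{z})^2\in k_{sq}$ (with a boundary case at $0$, where purely imaginary $z$ can occur, so your use of $k_{sq}^\ast$ is already slightly off). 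The paper instead imports from \cite{BA15} the substitution $t=x^2-y^2$, $z=x+y$, reducing anisotropy to the non-vanishing of a quadratic in $t$ over $k$ and then to a discriminant condition $\Delta(z)\not\in k_{sq}$.

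The genuine gap is your elimination step: the asserted pointwise equivalence ``$(\rho+1)^2-(b+a\rho)^2\in k_{sq}^\ast \Leftrightarrow m_1\in M_1$, where $m_1=\frac{2+(a-b)m_2}{a+b}$ and $m_2$ is the trace-half of the $\eta$ attached to the \emph{same} witness'' is false. Take $k=\F_{67}$ (a field of the second type), $(a,b)=(2,1)$, $\rho=1$: norm compatibility forces $m_2=\frac{3}{2}$, and $m_2^2-1=\frac{5}{4}$ is a nonsquare mod $67$, so $m_2\in M_2$; the realizability condition reads $4-9=-5\equiv 14^2 \in k_{sq}^\ast$, so a genuine zero of $h_{2,1}$ with these data exists (consistent with Proposition \ref{B for finite fields} (i)); yet $m_1=\frac{7}{6}$ gives $m_1^2-1=\frac{13}{36}\not\in k_{sq}$, since $13$ is a nonsquare mod $67$, so $m_1\not\in M_1$. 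The criterion of part (iii) is of course still met, but by a \emph{different} pair $(m_1,m_2)$. The source of the failure is that your $m_2$ is the trace-half of $y/\overline{y}$, whereas the clean one-to-one correspondence underlying the criterion holds for the trace-half of $(x+y)/\overline{(x+y)}$: the \cite{BA15} substitution makes the discriminant of the quadratic in $t$ literally equal to $s^2(m_1^2-1)$ with $s=\frac{a+b}{2}$, while your quantity $(\rho+1)^2-(b+a\rho)^2$ is \emph{not} equal to $s^2(m_1^2-1)$ up to squares once the norm relation is imposed. Consequently ``eliminating $\rho$'' is not careful bookkeeping but a genuinely open problem in your coordinates — you must solve a system of two coupled square-class conditions in $\rho$ (solvability over $k$ of the norm-compatibility quadratic $(1-a^2)\rho^2+2a(m_2-b)\rho+(2bm_2-1-b^2)=0$, plus realizability), and both directions of the equivalence with the $M_1\times M_2$ criterion fail pointwise. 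The same defect undermines your plan to recover the reverse half of (ii) as the boundary case $a=b$, which the paper gets at once from the constant discriminant $\Delta(z)=1-a^2$. The repair is to switch to the paper's variables $(t,z)=(x^2-y^2,x+y)$, or to find a substantially new argument for the purely existential equivalence.
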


\begin{proof}
{\it (i)} The identity $h_{a,-a}(1,1) = 0$ shows that $h_{a,-a}$ is isotropic.
\\[1ex]
For the remainder of the proof, let $(a,b) \in k^2$ such that $b \not= -a$. For all $(x,y) \in \ell^2$ such that $x^2 - y^2 \in k^\ast$, we set $t = x^2 - y^2$ and $z = x + y$. 
Note that $(t,z) \in k^\ast \times \ell^\ast$. With this notation, the following statement (*) is proved in \cite[Proof of Proposition 1]{BA15}.
\\[1ex]
(*) {\it Let $(a,b) \in k^2$ with $b \not= -a$. Then $(a,b) \in B_\ell$ if and only if 
\[ \frac{a+b}{4z\overline{z}} t^2 + \left( 1 + \frac{a-b}{4}\left( \frac{z}{\overline{z}} + 
\frac{\overline{z}}{z}  \right) \right) t + \frac{a+b}{4} z\overline{z}\ \not=\ 0 \] 
for all $(x,y) \in \ell^2$ with $x^2 - y^2 \in k^\ast$.}
\\[1ex]
For every pair $(t,z) \in k^\ast \times \ell^\ast$ there is\vspace{1ex} a pair $(x,y) \in \ell^2$ such that $x^2 - y^2 = t$ and $x+y = z$. Indeed, 
$(x,y) = \left( \frac{z^2 + t}{2z}, \frac{z^2 - t}{2z} \right)$\vspace{1ex} will do. For each $z \in \ell^\ast$, the quadratic polynomial in $t$ appearing as left hand side in (*) has all of its 
coefficients in $k$, and its discriminant is 
\[ \Delta(z) = \left( 1 + \frac{a-b}{4}\left( \frac{z}{\overline{z}} + 
\frac{\overline{z}}{z}  \right) \right)^2 - \frac{(a+b)^2}{4}\ . \]
Accordingly, (*) is equivalent to the following statement (**). 
\\[1ex]
(**) {\it Let $(a,b) \in k^2$ with $b \not= -a$. Then $(a,b) \in B_\ell$ if and only if $\Delta(z) \not\in k_{sq}$ for all $z \in \ell^\ast$.}
\\[1ex]
We proceed to prove (ii) and (iii) by use of (**).
\\[2ex]
{\it (ii)} Let $(a,b) = (a,a)$ with $a \in k^\ast$. Then $\Delta(z) = 1 - a^2$ for all $z \in 
\ell^\ast$. Thus, (**) asserts that $(a,a) \in B_\ell$ if and only if $1-a^2 \not\in k_{sq}$.
\\[2ex]
{\it (iii)} Let $(a,b) \in k^2$ with $a^2 \not= b^2$. Following \cite{BA15}, we set $s = \frac{a+b}{2}$,
\[ M_s = \left\{ m \in k\ \left|\ m^2 - s^2 \in k_{sq} \right.\right\}, \mbox{and}\ 
N = \left\{ \left. 1 + \frac{a-b}{4} \left( \frac{z}{\overline{z}} + \frac{\overline{z}}{z} \right)\ \right|\ z \in \ell^\ast \right\}. \] 
With this notation, $\{ \Delta(z)\ |\ z \in \ell^\ast \} = \{ n^2-s^2\ |\ n \in N \}$. Hence (**) states that $(a,b) \in B_\ell$ if and only if 
$M_s \cap N = \emptyset$. Moreover, $M_s = sM_1$ and $N = 1 + \frac{a-b}{2}M_2$. To justify the latter identity, we observe with Hilbert's Theorem 90 \cite[V, Lemma 7.7]{Gr07} that
\[ \left\{ \frac{1}{2} \left( \left. \frac{z}{\overline{z}} + \frac{\overline{z}}{z} \right)\ \right|\ z \in \ell^\ast \right\} = \left\{ \left. \frac{1}{2}(w + \overline{w})\ \right|\ w \in \S \right\}, \] 
where $\S = {\rm ker}(n_{\ell/k}^\ast)$. Choose $i \in {\rm Im}(\ell) \setminus \{0\}$. Then $i^2 = -t \in k^\ast \setminus k_{sq}^\ast$ and 
$\S = \left\{ w_1 + iw_2\ \left|\ w_1, w_2 \in k\ \wedge\ w_1^2 + tw_2^2 = 1 \right.\right\}$ and $|k^\ast/k_{sq}^\ast| = 2$ implies  
\[ \left\{ \left. \frac{1}{2}(w + \overline{w})\ \right|\ w \in \S \right\} = \left\{ w_1 \in k\ \left|\ w_1^2 - 1 \not\in k_{sq}^\ast \right.\right\} = M_2, \] 
whence\vspace{0,1cm} the claim follows. The above identities for $M_s$ and $N$ show that $M_s \cap N = \emptyset$ if and only if
$\frac{a+b}{2} m_1 \not= 1 + \frac{a-b}{2}m_2$ for all $(m_1, m_2) \in M_1 \times M_2$.
\end{proof}

\noindent
If $k$ is a finite field of odd order, then the above results can be supplemented by a counting argument, that readily yields solutions to Problems $1'$ and $2'$, and thereby a classification of 
$\mathscr{C}(k)$. In statements (i) and (ii) of Proposition \ref{B for finite fields} below we recover \cite[Proposition 1]{BA15} and \cite[Main Theorem]{BA15}. The latter statement solves Problem $1'$.
The counting argument used in the proof of Proposition \ref{B for finite fields} (i) is reproduced from \cite[Proof of Proposition 1]{BA15}, to keep our presentation self-contained. In contrast, 
the transversal $T_\ell^\mathscr{N}$ that solves Problem $2'$ (Proposition \ref{B for finite fields} (iii)) and the corresponding classification of $\mathscr{C}(k)$ (Corollary \ref{finite classification}) 
are beyond the scope of \cite{BA15}.

\begin{pro} \label{B for finite fields}
Let $k$ be a finite field of odd order. Then 
\\[1ex]
(i) $B_\ell = \left\{ (a,b) \in k^2\ \left|\ a=b\ \wedge\ 1-a^2 \not\in k_{sq} \right.\right\}$, 
\\[1ex]
(ii) $C_\ell^\mathscr{N} = \left\{ \left. \left( c_1, c_2, \frac{-c_1|c_2|}{|1-c_1|} \right) \in k^3\ \right|\ (1-c_1)c_2 \not= 0\ \wedge\ 1-2c_1 \not\in k_{sq}\ \right\}$, and  
\\[1ex]
(iii) $T_\ell^\mathscr{N} = \left\{ \left. \left(c_1, 1, \frac{-c_1}{|1-c_1|} \right) \in k^3\ \right|\ c_1 \not= 1\ \wedge\ 1-2c_1 \not\in k_{sq}\ \right\}$ is a transversal for $C_\ell^\mathscr{N}/\ell^\ast$.
\end{pro}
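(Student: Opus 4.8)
The plan is to handle the three parts in sequence: to derive (i) from Proposition~\ref{B} by a counting argument special to finite fields, and then to read off (ii) and (iii) from (i) via Lemma~\ref{from C to B} and the definition of $\ell^\ast$-equivalence.

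\textbf{Part (i).} Proposition~\ref{B} already settles the cases $b=-a$ and $b=a$, so I would only need to show that every pair with $b^2\neq a^2$ fails to lie in $B_\ell$. For such a pair, $a+b\neq0$ and $a-b\neq0$, and by Proposition~\ref{B}(iii) it suffices to exhibit $(m_1,m_2)\in M_1\times M_2$ with $(a+b)m_1=2+(a-b)m_2$. This is where finiteness is exploited. First I would record that $M_1\cup M_2=k$ and $M_1\cap M_2=\{\pm1\}$: every value $m^2-1$ is a nonzero square, $0$, or a non-square, and $m^2-1=0$ precisely at $m=\pm1$, with $1\neq-1$ because ${\rm char}(k)\neq2$. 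Inclusion--exclusion then gives $|M_1|+|M_2|=q+2$, where $q=|k|$. Since $a+b\neq0$ and $a-b\neq0$, the affine maps $m_1\mapsto(a+b)m_1$ and $m_2\mapsto2+(a-b)m_2$ are injective, so their images $I_1,I_2\subset k$ satisfy $|I_1|+|I_2|=q+2>q$; as $I_1\cup I_2\subset k$, they must meet, which yields the desired $(m_1,m_2)$ and hence $(a,b)\notin B_\ell$. I expect this cardinality count to be the only genuine obstacle; everything else is bookkeeping.

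\textbf{Part (ii).} I would translate (i) through Lemma~\ref{from C to B}: $\underline c\in C_\ell^{\mathscr{N}}$ iff $(1-c_1)c_2\neq0$ and $(a,b)=\bigl(\tfrac{c_1}{|1-c_1|},\tfrac{-c_3}{|c_2|}\bigr)\in B_\ell$, which by (i) means $a=b$ together with $1-a^2\notin k_{sq}$. The equation $a=b$ rearranges to $c_3=\tfrac{-c_1|c_2|}{|1-c_1|}$, the third coordinate in the display. For the square condition I would use $|1-c_1|^2=(1-c_1)^2$ to compute
\[ 1-a^2=\frac{(1-c_1)^2-c_1^2}{(1-c_1)^2}=\frac{1-2c_1}{(1-c_1)^2}, \]
and note that dividing by the nonzero square $(1-c_1)^2$ preserves membership in $k_{sq}$, so $1-a^2\notin k_{sq}$ is equivalent to $1-2c_1\notin k_{sq}$. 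Collecting these equivalences reproduces the displayed description of $C_\ell^{\mathscr{N}}$.

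\textbf{Part (iii).} I would prove exhaustion and irredundancy separately. For exhaustion, given $\underline c\in C_\ell^{\mathscr{N}}$, the second-type hypothesis together with Lemma~\ref{norm and squares}(ii) yields $\ell_{sq}^\ast\cap k^\ast=k^\ast$, so $c_2=a^2$ for some $a\in\ell^\ast$, and then $a\overline a=n_{\ell/k}(\sqrt{c_2})=|c_2|$. Consequently $a\in\ell^\ast(\underline c,\underline d)$ with $\underline d=\bigl(c_1,\tfrac{c_2}{a^2},\tfrac{c_3}{a\overline a}\bigr)=\bigl(c_1,1,\tfrac{c_3}{|c_2|}\bigr)$; substituting $c_3=\tfrac{-c_1|c_2|}{|1-c_1|}$ from (ii) gives $\underline d=\bigl(c_1,1,\tfrac{-c_1}{|1-c_1|}\bigr)\in T_\ell^{\mathscr{N}}$, so every class meets $T_\ell^{\mathscr{N}}$. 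For irredundancy, I would use that the first coordinate is invariant under $\ell^\ast$-equivalence by the very definition of $\ell^\ast(\underline c,\underline d)$; since every entry of a triple in $T_\ell^{\mathscr{N}}$ is a function of $c_1$ alone, two distinct members already differ in their first coordinate and therefore cannot be equivalent. This establishes that $T_\ell^{\mathscr{N}}$ is a transversal for $C_\ell^{\mathscr{N}}/\ell^\ast$.
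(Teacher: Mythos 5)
Your proposal is correct and follows essentially the same route as the paper: part (i) reduces via Proposition \ref{B} to producing a solution of $(a+b)m_1 = 2+(a-b)m_2$ by a pigeonhole count when $a^2 \not= b^2$, part (ii) translates through Lemma \ref{from C to B} using $1-a^2 = \frac{1-2c_1}{(1-c_1)^2}$, and part (iii) normalizes $c_2$ to $1$ via Lemma \ref{norm and squares} (ii) together with the invariance of $c_1$ under $\ell^\ast$-equivalence. The only deviation is cosmetic: where the paper computes $|M_1| = \frac{q+1}{2}$ and $|M_2| = \frac{q+3}{2}$ explicitly, you obtain the same decisive bound $|M_1|+|M_2| = q+2 > q$ by inclusion--exclusion from $M_1 \cup M_2 = k$ and $M_1 \cap M_2 = \{\pm 1\}$, which is an equally valid (and slightly more self-contained) justification of the same counting step.
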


\begin{proof}
{\it (i)} As every finite field $k$ of odd order is of the second type, Proposition \ref{B} applies. Therefore it suffices to show that, for all $(a,b) \in k^2$ with $a^2 \not= b^2$, the identity
$(a+b)m_1 = 2 + (a-b)m_2$ has a solution $(m_1,m_2) \in M_1 \times M_2$. 

Indeed, every $(a,b) \in k^2$ with $a^2 \not= b^2$ determines a bijective map 
\[ \varphi: k \to k,\ \varphi(x) = \frac{2}{a+b} + \frac{a-b}{a+b}x. \] 
Setting $q = |k|$, we have that $|M_1| = \frac{q+1}{2}$ and $|M_2| = \frac{q+3}{2}$, and hence
\begin{eqnarray*}
|M_1 \cap \varphi(M_2)| & = & |M_1| + |\varphi(M_2)| - |M_1 \cup \varphi(M_2)| \\[1ex]
 & \ge & |M_1| + |M_2| - |k| \\[1ex]
 & = & \frac{q+1}{2} + \frac{q+3}{2} - q\ =\ 2.  
\end{eqnarray*}
Thus, there exists a pair $(m_1,m_2) \in M_1 \times M_2$ such that $m_1 = \varphi (m_2)$. 
\\[1ex]
{\it (ii)} Applying Lemma \ref{from C to B} and (i), we obtain for all $\underline{c} \in k^3$ the chain of equivalences
\begin{eqnarray*}
\underline{c} \in C_\ell^{\mathscr{N}} & \Leftrightarrow & (1-c_1)c_2 \not= 0\ \wedge\  \frac{c_1}{|1-c_1|} = \frac{-c_3}{|c_2|}\ \wedge\ 1-2c_1 \not\in k_{sq} \\
 &  \Leftrightarrow & (1-c_1)c_2 \not= 0\ \wedge\ 1-2c_1 \not\in k_{sq}\ \wedge\ c_3 =  \frac{-c_1|c_2|}{|1-c_1|}.
\end{eqnarray*} 
{\it (iii)} By definition of $\ell^\ast$-equivalence, and because every $c_2 \in k^\ast$ can be written $c_2 = a^2$ for some $a \in \ell^\ast$ (Lemma \ref{norm and squares} (ii)), statement (iii) 
is a straightforward consequence of (ii).
\end{proof}

\begin{cor} \label{finite classification}
Let $k$ be a finite field of odd order, let $k \subset \ell$ be a quadratic extension, and let $T_\ell^{\mathscr{N}}$ be the transversal displayed in Proposition \ref{B for finite fields} (iii). Then
$\mathscr{F}_\ell(T_\ell^{\mathscr{N}})\ \mbox{classifies}\ \mathscr{C}(k) = \mathscr{N}(k)$. 
\end{cor}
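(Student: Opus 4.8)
The plan is to obtain the assertion as an immediate application of Corollary \ref{uqec classification} (ii), so the whole task reduces to checking that its two hypotheses are met. Concretely, I would first confirm that a finite field $k$ of odd order is of the second type, namely that it has unique quadratic extension class and that the norm morphism $n_{\ell/k}^\ast$ is surjective; then I would invoke Proposition \ref{B for finite fields} (iii) to supply the required transversal; and finally I would cite Corollary \ref{uqec classification} (ii) to conclude.

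For the first point, since $|k|$ is odd we have $\mathrm{char}(k) \not= 2$, and the multiplicative group $k^\ast$ is cyclic of even order $|k|-1$; hence its subgroup of squares $k_{sq}^\ast$ has index $2$, so that $|k^\ast/k_{sq}^\ast| = 2$ and $k$ has unique quadratic extension class in the sense of Subsection 5.2. Moreover, the norm map of any extension of finite fields is surjective, so $n_{\ell/k}^\ast: \ell^\ast \to k^\ast$ is surjective. This is precisely the assertion, recorded at the start of Subsection 5.4, that such $k$ is of the second type, which I would simply cite rather than reprove.

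With these facts in hand, Corollary \ref{uqec classification} (ii) applies essentially verbatim: it states that, whenever $n_{\ell/k}^\ast$ is surjective and $T_\ell^{\mathscr{N}}$ is a transversal for $C_\ell^{\mathscr{N}}/\ell^\ast$, the set $\mathscr{F}_\ell(T_\ell^{\mathscr{N}})$ classifies $\mathscr{C}(k) = \mathscr{N}(k)$. The hypothesis that the explicit set $T_\ell^{\mathscr{N}}$ is such a transversal is exactly the content of Proposition \ref{B for finite fields} (iii). Since $k$ has unique quadratic extension class, all quadratic extensions of $k$ are isomorphic, so the choice of $\ell$ in the statement is immaterial and the conclusion holds for any quadratic extension $k \subset \ell$.

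The genuine mathematical content lies entirely in the preceding results, in particular the explicit determination of $B_\ell$ (Proposition \ref{B}), its finite-field specialisation and the resulting display of $C_\ell^{\mathscr{N}}$ and the transversal $T_\ell^{\mathscr{N}}$ (Proposition \ref{B for finite fields}), together with the reduction packaged in Corollary \ref{uqec classification}. Consequently there is no real obstacle at this final step; the only care required is to verify that the three cited ingredients dovetail correctly, and in particular that ``second type'' is exactly the hypothesis consumed by Corollary \ref{uqec classification} (ii).
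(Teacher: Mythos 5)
Your proposal is correct and matches the paper's proof, which is exactly the one-line application of Corollary \ref{uqec classification} (ii), with Proposition \ref{B for finite fields} (iii) supplying the transversal and the standing fact (stated at the start of Subsection 5.4) that finite fields of odd order are of the second type. Your explicit verification of the second-type property via cyclicity of $k^\ast$ and surjectivity of the finite-field norm is a harmless elaboration of what the paper takes as known.
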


\begin{proof}
Corollary \ref{uqec classification} (ii) applies, and proves the statement.
\end{proof}

\section{Kleinian coverings by group actions}

In this last section, we investigate the structure of the groupoid $\mathscr{C}(k)$, still assuming that ${\rm char}(k) \not= 2$. In order to reach this goal, we first present the concept of a
covering of a groupoid by group actions, in great generality (Subsection 6.1). Next, we deepen our knowledge of the particular groupoid $\mathscr{C}(k)$ by gaining more insight into its morphism sets 
(Subsection 6.2). In the final Subsection 6.3, we mould the accumulated information on $\mathscr{C}(k)$ into the concept provided in Subsection 6.1. As an application, we determine the automorphism groups
of all division algebras in $\mathscr{C}(k)$.

\subsection{The concept of a covering of a groupoid by group actions}

The concept to be presented here refines and extends the concept of a description of a groupoid by a group action, introduced in \cite[Section 5]{Di12}.

Every left action $G \times X \to X,\ (g,x) \mapsto gx$ of a group $G$ on a set $X$ gives rise to a small groupoid 
$_GX$, with object set $X$ and morphism sets 
\[ {\rm Mor}_{_{_GX}}(x,y) = \{ (g,x,y) \in G \times \{x\} \times \{y\}\ |\ gx = y \}. \]
We say that a groupoid $\mathscr{X}$ is a {\it group action groupoid} if $\mathscr{X} =\ _GX$ for some left group action $G \times X \to X$.

For any groupoid $\mathscr{Y}$, a {\it description of} $\mathscr{Y}$ {\it by a group action} (or briefly, a {\it description} of $\mathscr{Y}$) is, by definition, a quadruple $(X,G,\gamma,\mathscr{F})$, 
composed of a set $X$, a group $G$, a group action $\gamma: G \times X \to X$, and a functor $\mathscr{F}:\ _GX \to \mathscr{Y}$ which is dense, faithful, and {\it quasi-full} in the sense that 
\[ {\rm Mor}_{\mathscr{Y}}(\mathscr{F}(x),\mathscr{F}(y)) \not= \emptyset\ \Rightarrow\ {\rm Mor}_{_{_GX}}(x,y)  \not= \emptyset \]
holds for all $(x,y) \in X \times X$. A description $(X,G,\gamma,\mathscr{F})$ of a groupoid $\mathscr{Y}$ is said to be {\it full} if the functor $\mathscr{F}$ is full, i.e.~if $\mathscr{F}$ is an 
equivalence of categories.\footnote{Full descriptions in this sense were simply called descriptions in \cite[Section 5]{Di12}, while descriptions in our present sense were not considered in 
\cite[Section 5]{Di12}.} 
  
If $(X,G,\gamma,\mathscr{F})$ is a description of a groupoid $\mathscr{Y}$ and $T \subset X$ is a transversal for the orbit set $X/\gamma$ of the group action $\gamma$, then the subset 
$\mathscr{F}(T) \subset \mathscr{Y}$ is a transversal for the set $\mathscr{Y}/\hspace{-0,1cm}\simeq$ of all isoclasses of $\mathscr{Y}$. Accordingly, $\mathscr{Y}$ is svelte (Subsection 2.1), and 
$\mathscr{F}(T)$ classifies $\mathscr{Y}$. 

Now, let $(\mathscr{Y}_i)_{i \in I}$ be a family of full subgroupoids of a groupoid $\mathscr{Y}$, and suppose that a description $(X_i,G_i,\gamma_i,\mathscr{F}_i)$ of $\mathscr{Y}_i$ is given for each 
$i \in I$. We say that the family of descriptions $(X_i,G_i,\gamma_i,\mathscr{F}_i)_{i \in I}$ is a {\it covering of} $\mathscr{Y}$ {\it by group actions} if the identity of object classes 
$\mathscr{Y} = \bigcup_{i \in I} \mathscr{Y}_i$ holds true.

\subsection{Morphism sets in $\mathscr{C}(k)$}

Throughout this subsection, $k \subset \ell$ is a quadratic extension in characteristic not 2, and $\underline{c},\underline{d} \in C_\ell$ are $\ell$-admissible triples. Our study of the morphism sets in 
$\mathscr{C}(k)$ is based on a characterization of the special morphisms $\varphi_a$ and $\psi_a$ from $A(\ell,\underline{c})$ to $A(\ell,\underline{d})$, constructed in Subsection 3.3, among 
all morphisms from $A(\ell,\underline{c})$ to $A(\ell,\underline{d})$. Regarding the notation $1_A\ell$, see Proposition \ref{constructed algebras} (ii).

\begin{pro} \label{constructed morphisms vs all morphisms}
For every quadratic extension $k \subset \ell$ in characteristic \mbox{not 2,} and for all $\underline{c},\underline{d} \in C_\ell$, the following holds true. 
\\[1ex]
(i) $\left\{ \varphi_a,\psi_a \left|\ a \in \ell^\ast(\underline{c},\underline{d}) \right. \right\} = \left\{ \mu \in {\rm Mor}_{\mathscr{C}(k)}(A(\ell,\underline{c}),A(\ell,\underline{d})) \left|\ 
\mu(1_A\ell) = 1_A\ell \right. \right\}$.
\\[1ex]
(ii) If $(\underline{c},\underline{d}) \in \left( C_\ell^\mathscr{N} \times C_\ell^\mathscr{N} \right) \sqcup \left( C_\ell^\mathscr{K} \times C_\ell^\mathscr{K} \right)$, then $\mu(1_A\ell) = 1_A\ell$ 
holds\vspace{0,1cm} for all morphisms $\mu \in {\rm Mor}_{\mathscr{C}(k)}(A(\ell,\underline{c}),A(\ell,\underline{d}))$.
\end{pro}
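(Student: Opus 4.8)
The plan is to prove part~(ii) first and then part~(i), because the harder inclusion of~(i) feeds on the structural facts already assembled in the proof of Theorem~\ref{reduction of morphisms}, whereas~(ii) merely isolates two favourable cases of that same proof.

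For part~(ii) I would argue in the two cases separately. If $(\underline{c},\underline{d}) \in C_\ell^\mathscr{N} \times C_\ell^\mathscr{N}$, then both algebras are non-associative, so Proposition~\ref{constructed algebras}~(ii), Proposition~\ref{decomposition} and Proposition~\ref{types of admissible triples}~(i) together force $N_r(A(\ell,\underline{c})) = 1_A\ell = N_r(A(\ell,\underline{d}))$ (combine $1_A\ell \subseteq N_r$ with $\dim_k N_r = 2 = \dim_k 1_A\ell$). Since every isomorphism of algebras carries right nucleus onto right nucleus, $\mu(1_A\ell) = 1_A\ell$ follows at once. If instead $(\underline{c},\underline{d}) \in C_\ell^\mathscr{K} \times C_\ell^\mathscr{K}$, both algebras are fields; choosing $u \in {\rm Im}(1_A\ell) \setminus \{0\}$ one has $u^2 = 1_A\lambda \in 1_Ak$, and because $\mu$ fixes $1_Ak$ elementwise this gives $\mu(u)^2 = u^2$, hence $\mu(u) = \pm u$ in the field $A(\ell,\underline{d})$; as $1_A\ell = 1_Ak \oplus uk$, again $\mu(1_A\ell) = 1_A\ell$. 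These are verbatim the first and third cases of Step~1 in the proof of Theorem~\ref{reduction of morphisms}, which I would simply reproduce.

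For part~(i), the inclusion from left to right is a direct check: each $\varphi_a$ and $\psi_a$ with $a \in \ell^\ast(\underline{c},\underline{d})$ is a morphism in $\mathscr{C}(k)$ by Lemma~\ref{constructed morphisms}~(ii), and evaluating on vectors with vanishing second entry shows $\varphi_a(1_A\ell) = \psi_a(1_A\ell) = 1_A\ell$. For the reverse inclusion, let $\mu$ be any morphism with $\mu(1_A\ell) = 1_A\ell$; restriction to the subfield $1_A\ell \cong \ell$ yields an induced $\mu_\ell \in {\rm Gal}(\ell/k) = \{ \I_\ell, \sigma \}$. If $\mu_\ell = \I_\ell$, then $\mu$ fixes $1_A\ell$ elementwise and is therefore right $\ell$-linear via identity~(2); Steps~3 and~4 of the proof of Theorem~\ref{reduction of morphisms} then yield $\mu(j) = ja$ with $a \in \ell^\ast(\underline{c},\underline{d})$, and I would conclude $\mu = \varphi_a$. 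If $\mu_\ell = \sigma$, I would pass to $\mu\beta$, where $\beta = \psi_1 \in {\rm Aut}(A(\ell,\underline{c}))$ is the automorphism of Proposition~\ref{constructed algebras}~(iii): this composite induces $\sigma^2 = \I_\ell$ on $\ell$, so the previous case gives $\mu\beta = \varphi_a$ for some $a \in \ell^\ast(\underline{c},\underline{d})$, whence $\mu = \varphi_a\beta = \psi_a$, using the readily verified identity $\psi_a = \varphi_a\beta$ together with $\beta^2 = \I$.

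I expect the only delicate point to be the passage from agreement on generators to equality of maps in the reverse inclusion of~(i): knowing $\mu(j) = ja = \varphi_a(j)$ and that both maps fix $1_A\ell$ elementwise is not by itself enough, and I must invoke right $\ell$-linearity of $\mu$ and of $\varphi_a$ together with the right $\ell$-module decomposition $A = 1_A\ell \oplus j\ell$ to pin $\mu$ down. Keeping the composition orientation straight in the case $\mu_\ell = \sigma$, so that the very same parameter $a$ reappears in $\psi_a$, is the other place where a sign or an inversion could slip in; all the remaining verifications are routine.
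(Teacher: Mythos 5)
Your proposal is correct and follows essentially the same route as the paper: part (ii) is exactly the first and third cases of Step 1 in the proof of Theorem \ref{reduction of morphisms}, and part (i) is proved by the same Galois case analysis, invoking Steps 3--4 when $\mu_\ell = \I_\ell$ and composing with $\psi_1$ (using $\psi_1^2 = \I$ and $\varphi_a\psi_1 = \psi_a$) when $\mu_\ell = \sigma$. Your explicit remark that right $\ell$-linearity of $\mu$, together with $\mu(j) = ja$ and the decomposition $A = 1_A\ell \oplus j\ell$, is what upgrades agreement on generators to the identity $\mu = \varphi_a$ correctly fills a point the paper leaves implicit in its citation of Steps 3 and 4.
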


\begin{proof}
{\it (i)} The inclusion ``$\subset$'' holds by definition of $\varphi_a$ and $\psi_a$. Conversely, let $\mu: A(\ell,\underline{c}) \to A(\ell,\underline{d})$ be a morphism in $\mathscr{C}(k)$, such that
$\mu(1_A\ell) = 1_A\ell$. Then $\mu$ induces a Galois automorphism $\mu_\ell \in {\rm Gal}(\ell/k)$, and ${\rm Gal}(\ell/k) = \langle \sigma \rangle$ has order 2. If $\mu_\ell = \I_\ell$, then the arguments 
contained in the proof of Theorem \ref{reduction of morphisms}, Step 3 and Step 4, show that $\mu = \varphi_a$ for some $a \in \ell^\ast(\underline{c},\underline{d})$. If $\mu_\ell = \sigma$, then the same 
reasoning applies to $\mu\psi_1$, where $\psi_1 \in {\rm Aut}(A(\ell,\underline{c}))$, because $(\mu\psi_1)_\ell = \I_\ell$. Thus $\mu\psi_1 = \varphi_a$ and hence $\mu = \varphi_a\psi_1 = \psi_a$, for 
some $a \in \ell^\ast(\underline{c},\underline{d})$. 
\\[1ex]
{\it (ii)} See proof of Theorem \ref{reduction of morphisms}, Step 1.
\end{proof}

\begin{cor} \label{when all morphisms are constructed}
If $(\underline{c},\underline{d}) \in \left( C_\ell^\mathscr{N} \times C_\ell^\mathscr{N} \right) \sqcup \left( C_\ell^\mathscr{K} \times C_\ell^\mathscr{K} \right)$, then 
\[ \left\{ \varphi_a,\psi_a \left|\ a \in \ell^\ast(\underline{c},\underline{d}) \right. \right\} = {\rm Mor}_{\mathscr{C}(k)}(A(\ell,\underline{c}),A(\ell,\underline{d})). \]
\end{cor}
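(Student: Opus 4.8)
The plan is to obtain this corollary as a direct combination of the two parts of Proposition \ref{constructed morphisms vs all morphisms}, which between them have already accomplished all the substantive work. First I would invoke part (ii): under the standing hypothesis that $(\underline{c},\underline{d})$ lies in $\left( C_\ell^\mathscr{N} \times C_\ell^\mathscr{N} \right) \sqcup \left( C_\ell^\mathscr{K} \times C_\ell^\mathscr{K} \right)$, every morphism $\mu \in {\rm Mor}_{\mathscr{C}(k)}(A(\ell,\underline{c}),A(\ell,\underline{d}))$ satisfies $\mu(1_A\ell) = 1_A\ell$. Consequently the side condition $\mu(1_A\ell) = 1_A\ell$ that cuts out the right-hand set in part (i) is automatically fulfilled by every element of the morphism set, so that
\[ \left\{ \mu \in {\rm Mor}_{\mathscr{C}(k)}(A(\ell,\underline{c}),A(\ell,\underline{d}))\ \left|\ \mu(1_A\ell) = 1_A\ell \right. \right\} = {\rm Mor}_{\mathscr{C}(k)}(A(\ell,\underline{c}),A(\ell,\underline{d})). \]

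Then I would substitute this identity into part (i) of the Proposition, whose left-hand side is exactly $\left\{ \varphi_a,\psi_a \left|\ a \in \ell^\ast(\underline{c},\underline{d}) \right. \right\}$, to arrive at the asserted equality. I do not expect any genuine obstacle here, since the corollary is a purely formal consequence: the real content — that the special morphisms $\varphi_a,\psi_a$ exhaust all those morphisms preserving the subfield $1_A\ell$, and that in the $\mathscr{N}$- and $\mathscr{K}$-cases this preservation is automatic — already resides in the Proposition (the latter fact relying on Step 1 of the proof of Theorem \ref{reduction of morphisms}). The only point worth flagging is why the block $C_\ell^\mathscr{S} \times C_\ell^\mathscr{S}$ must be excluded from the hypothesis: there a morphism may move $1_A\ell$ to a different Kleinian subfield, so part (ii) fails and the inner automorphisms of the central skew field contribute morphisms outside $\{\varphi_a,\psi_a\}$. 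Thus the corollary records the complete description of the Hom-sets precisely in the two blocks where nucleus preservation is forced.
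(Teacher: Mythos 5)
Your proposal is correct and coincides with the paper's own (implicit) proof: the corollary is stated without a separate proof precisely because it is the formal combination of Proposition \ref{constructed morphisms vs all morphisms} (ii), which makes the side condition $\mu(1_A\ell) = 1_A\ell$ automatic in the $\mathscr{N}$- and $\mathscr{K}$-blocks, with the set identity of part (i). Your flag about the excluded $\mathscr{S}$-block is also accurate and matches the paper's treatment via Skolem--Noether in Step 1 of Theorem \ref{reduction of morphisms}.
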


\vspace{0,1cm}
\noindent
Recall from Proposition \ref{constructed algebras} (iii)-(iv) and Lemma \ref{constructed morphisms} (iii) that, for all $\underline{c} \in C_\ell$, the division algebra $A(\ell,\underline{c})$ admits the 
Kleinian pair $(\alpha,\beta) = (\varphi_{-1},\psi_1)$. 

\begin{pro} \label{Kleinian subgroup}
For every quadratic extension $k \subset \ell$ in characteristic \mbox{not 2,} and for all $\underline{c} \in C_\ell$, the following holds true. 
\\[1ex]
(i) If $c_2 = 0$, then the subgroup $\langle \varphi_{-1},\psi_1 \rangle < {\rm Aut}(A(\ell,\underline{c}))$ is proper.
\\[1ex]
(ii) If $c_2 \not= 0$, then $\langle \varphi_{-1},\psi_1 \rangle = {\rm Aut}(A(\ell,\underline{c}))$.
\end{pro}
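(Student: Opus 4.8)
The plan is to read both parts off the stabiliser set $\ell^\ast(\underline{c},\underline{c})$, using the explicit automorphisms $\varphi_a,\psi_a$ together with Corollary \ref{when all morphisms are constructed}. Writing $A = A(\ell,\underline{c})$, I would first record two preparatory facts. From the definition of $\ell^\ast(\underline{c},\underline{d})$ in Subsection 3.3, putting $\underline{d}=\underline{c}$ gives
\[ \ell^\ast(\underline{c},\underline{c}) = \left\{ a \in \ell^\ast\ \left|\ c_2a^{-2} = c_2\ \wedge\ c_3(a\overline{a})^{-1} = c_3 \right.\right\}, \]
and, since $\varphi_1 = \I$, $\varphi_{-1}=\alpha$, $\psi_1=\beta$ (Lemma \ref{constructed morphisms} (iii)) and $\psi_{-1} = \varphi_{-1}\psi_1$, the Kleinian group is exactly $\langle \varphi_{-1},\psi_1\rangle = \{\varphi_a,\psi_a \mid a \in \{\pm 1\}\}$. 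Two trivial observations will be reused: distinct $a$ yield distinct $\varphi_a$ (compare the second coordinates of the images of $\left( \begin{array}{c} 0\\1 \end{array}\right)$), and no $\varphi_a$ equals any $\psi_b$ (the former fixes $1_A\ell$ pointwise, the latter restricts to conjugation on $1_A\ell$, which is non-trivial as $\ell \neq k$).

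For part (ii), the hypothesis $c_2 \neq 0$ gives $A \notin \mathscr{S}$ by Proposition \ref{types of admissible triples} (ii), so $(\underline{c},\underline{c})$ lies in $\left(C_\ell^\mathscr{N}\times C_\ell^\mathscr{N}\right) \sqcup \left(C_\ell^\mathscr{K}\times C_\ell^\mathscr{K}\right)$ and Corollary \ref{when all morphisms are constructed} applies, yielding ${\rm Aut}(A) = \{\varphi_a,\psi_a \mid a \in \ell^\ast(\underline{c},\underline{c})\}$. The first defining condition above, with $c_2\neq 0$, forces $a^2=1$, hence $a=\pm1$, whereupon $a\overline{a}=1$ makes the second condition automatic; thus $\ell^\ast(\underline{c},\underline{c}) = \{\pm 1\}$ and ${\rm Aut}(A) = \langle\varphi_{-1},\psi_1\rangle$.

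For part (i), $\ell$-admissibility of $\underline{c}=(c_1,0,c_3)$ forces $c_3 = -q_{\underline{c}}\!\left( \begin{array}{c} 0\\1 \end{array}\right) \neq 0$, so the two defining conditions collapse to $a\overline{a}=1$ and $\ell^\ast(\underline{c},\underline{c}) = \S$. By Lemma \ref{constructed morphisms} (ii), every $\varphi_a$ with $a \in \S$ is then an automorphism of $A$, so it suffices to exhibit a single $a \in \S\setminus\{\pm1\}$: for such $a$ the map $\varphi_a$ lies in ${\rm Aut}(A)$ but outside $\langle\varphi_{-1},\psi_1\rangle$ by the two observations above, giving a proper inclusion. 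Fixing $i \in {\rm Im}(\ell)\setminus\{0\}$ and setting $a = (1+i)(1-i)^{-1}$, one has $a\overline{a} = 1$, while the equalities $a=1$ or $a=-1$ would force $i=0$ or ${\rm char}(k)=2$ respectively, both excluded.

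I expect the arithmetic of $\ell^\ast(\underline{c},\underline{c})$ and the coordinate checks to be entirely routine; the one point requiring care is the invocation of Corollary \ref{when all morphisms are constructed} in part (ii), which is legitimate only once the central-skew-field case $A\in\mathscr{S}$ has been excluded. This is exactly where $c_2\neq 0$ enters, and it explains why the statement dichotomises along $c_2=0$ versus $c_2\neq 0$: when $c_2=0$ the stabiliser $\S$ is strictly larger than $\{\pm1\}$, as the element $(1+i)(1-i)^{-1}$ witnesses, and the surplus produces automorphisms beyond Klein's four-group.
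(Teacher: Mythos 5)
Your proof is correct and takes essentially the same route as the paper's: both rest on computing the stabiliser $\ell^\ast(\underline{c},\underline{c})$ (equal to $\S$ when $c_2=0$, to $\{\pm 1\}$ when $c_2\not=0$), with the element $(1+i)(1-i)^{-1}\in \S\setminus\{\pm 1\}$ witnessing properness in (i), and Corollary \ref{when all morphisms are constructed} --- applicable exactly because $c_2\not=0$ places $(\underline{c},\underline{c})$ in $\left(C_\ell^\mathscr{N}\times C_\ell^\mathscr{N}\right)\sqcup\left(C_\ell^\mathscr{K}\times C_\ell^\mathscr{K}\right)$ --- giving equality in (ii). Your explicit verifications that distinct parameters yield distinct $\varphi_a$ and that no $\varphi_a$ coincides with any $\psi_b$ only spell out injectivity facts the paper leaves implicit, and are welcome but not a different method.
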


\begin{proof}
For definition of the subgroup $\S < \ell^\ast$, see Subsection 2.2. 
\\[1ex]
{\it (i)} If $c_2 = 0$, then $c_3 \not= 0$, whence $\ell^\ast(\underline{c},\underline{c}) = \S$. Because $\frac{1+i}{1-i} \in \S \setminus \{\pm1\}$ for all $i \in {\rm Im}(\ell) \setminus \{0\}$, 
the first inclusion in  
\[ \langle \varphi_{-1},\psi_1 \rangle = \{ \varphi_a, \psi_a\ |\ a = \pm1 \} \subset \{ \varphi_a, \psi_a\ |\ a \in 
\S \} \subset {\rm Aut}(A(\ell,\underline{c})) \] 
is proper.
\\[1ex]
{\it (ii)} If $c_2 \not= 0$, then $\ell^\ast(\underline{c},\underline{c}) = \{ \pm1 \}$ and $\underline{c} \in C_\ell^\mathscr{N} \sqcup C_\ell^\mathscr{K}$ (Proposition \ref{types of admissible triples}).
With this information, Corollary \ref{when all morphisms are constructed} implies that
\[ \langle \varphi_{-1},\psi_1 \rangle = \{ \varphi_a, \psi_a\ |\ a \in \ell^\ast(\underline{c},\underline{c}) \} = {\rm Aut}(A(\ell,\underline{c})), \] 
which completes the proof.
\end{proof}

\subsection{The Kleinian coverings of $\mathscr{C}(k)$ by group actions}

Let $k \subset \ell$ be a quadratic extension in characteristic not 2. In Subsection 4.4 we introduced the full subgroupoid $\mathscr{C}_\ell = \mathscr{C}_\ell(k)$ of 
$\mathscr{C} = \mathscr{C}(k)$. Now we decompose it into two blocks $\mathscr{C}_\ell^0$ and $\mathscr{C}_\ell^1$, defined by their object classes 
$\mathscr{C}_\ell^0 = \{ A \in \mathscr{C}_\ell\ |\ |{\rm Aut}(A)| > 4\}$ and $\mathscr{C}_\ell^1 = \{ A \in \mathscr{C}_\ell\ |\ |{\rm Aut}(A)| = 4\}$.\vspace{0,1cm} Aiming for descriptions 
$Q_\ell^\nu = (C_\ell^\nu,G_\ell^\nu,\gamma_\ell^\nu,\mathscr{F}_\ell^\nu)$ of $\mathscr{C}_\ell^\nu,\ \nu \in \{0,1\}$, we proceed\vspace{0,1cm} to define the data constituting the quadruples $Q_\ell^0$ and 
$Q_\ell^1$.\vspace{0,1cm}

The sets $C_\ell^0 = \{ \underline{c} \in C_\ell\ |\ c_2 = 0 \}$ and $C_\ell^1 = \{ \underline{c} \in C_\ell\ |\ c_2 \not= 0 \}$ are sets of $\ell$-admissible triples, partitioning $C_\ell$.

The groups \mbox{$G_\ell^0 = \ell^\ast >\hspace{-0,2cm}\lhd\ {\rm Gal}(\ell/k)$} and \mbox{$G_\ell^1 = \A^\ast >\hspace{-0,2cm}\lhd\ {\rm Gal}(\ell/k)$} are semi-direct products of the type mentioned in
Subsection 2.2.

The group actions $\gamma_\ell^\nu: G_\ell^\nu \times C_\ell^\nu \to C_\ell^\nu$ are defined uniformly, for both $\nu \in \{0,1\}$, by 
\begin{eqnarray} 
\gamma_\ell^\nu \left( \left( a, \sigma^i \right),\left( c_1, c_2, c_3 \right) \right) = \left( c_1, \frac{c_2}{a^2}, \frac{c_3}{a\overline{a}} \right). 
\end{eqnarray}

The functors $\mathscr{F}_\ell^\nu: \ _{G_\ell^\nu} C_\ell^\nu\ \to\ \mathscr{C}_\ell^\nu$ are defined uniformly,\vspace{0,1cm} for both $\nu \in \{0,1\}$, by 
$\mathscr{F}_\ell^\nu(\underline{c})\ =\ A(\ell, \underline{c})$ on objects $\underline{c}$, and by 
\begin{eqnarray} 
\mathscr{F}_\ell^\nu((a, \sigma^i), \underline{c}, \underline{d})\ =\ \left\{ \begin{array}{ccc} \varphi_a & \mbox{if} & i=0 \\ \psi_a & \mbox{if} & i=1 \end{array} \right. 
\end{eqnarray}
on morphisms $((a, \sigma^i), \underline{c}, \underline{d})$.

It is easily verified that both maps $\gamma_\ell^0$ and $\gamma_\ell^1$ indeed are group actions, and, taking Propositions \ref{constructed algebras} (iv) and \ref{Kleinian subgroup} as well as
Lemma \ref{constructed morphisms} (ii) into account, that both families of maps $\mathscr{F}_\ell^0$ and $\mathscr{F}_\ell^1$ indeed are functors.\vspace{0,1cm}

We also note that, for both $\nu \in \{ 0,1 \}$, the set $C_\ell^\nu$ is a union of $\ell^\ast$-equi\-valence classes in $C_\ell$, and the $\ell^\ast$-equivalence classes of $C_\ell^\nu$ coincide with 
the $G_\ell^\nu$-orbits of $C_\ell^\nu$. Thus, we are about to enhance our earlier approach with categorical and functorial structure.

\begin{pro} \label{local description}
For every quadratic extension $k \subset \ell$ in characteristic \mbox{not 2,} the following holds true. 
\\[1ex]
(i) $\mathscr{C}_\ell = \mathscr{C}_\ell^0 \amalg \mathscr{C}_\ell^1$.
\\[1ex]
(ii) $Q_\ell^0 = (C_\ell^0,G_\ell^0,\gamma_\ell^0,\mathscr{F}_\ell^0)$ is a description of $\mathscr{C}_\ell^0$ by a group action. 
\\[1ex]
(iii) $Q_\ell^1 = (C_\ell^1,G_\ell^1,\gamma_\ell^1,\mathscr{F}_\ell^1)$ is a full description of $\mathscr{C}_\ell^1$ by a group action.
\end{pro}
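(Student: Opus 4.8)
The plan is to establish the decomposition (i) first, and then, for each $\nu \in \{0,1\}$, to verify that the quadruple $Q_\ell^\nu$ has the three defining properties of a description --- density, faithfulness and quasi-fullness --- adding a fullness argument for $\nu = 1$. Throughout I rely on the reduction Theorems \ref{reduction of objects} and \ref{reduction of morphisms} and on the automorphism dichotomy of Proposition \ref{Kleinian subgroup}. Recall that $\mathscr{F}_\ell^\nu$ is already known to be a functor and $\gamma_\ell^\nu$ a group action, as recorded just before the statement.

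For (i) I argue on objects and on morphisms separately. Every $A \in \mathscr{C}_\ell$ admits a Kleinian pair, so $|{\rm Aut}(A)| \ge 4$; hence the two conditions $|{\rm Aut}(A)| > 4$ and $|{\rm Aut}(A)| = 4$ partition the object class of $\mathscr{C}_\ell$ into those of $\mathscr{C}_\ell^0$ and $\mathscr{C}_\ell^1$. Since all morphisms in $\mathscr{C}_\ell$ are isomorphisms, isomorphic objects have equinumerous automorphism groups, so no morphism joins an object of $\mathscr{C}_\ell^0$ to one of $\mathscr{C}_\ell^1$; this is exactly the coproduct condition $\mathscr{C}_\ell = \mathscr{C}_\ell^0 \amalg \mathscr{C}_\ell^1$.

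For the density and faithfulness of $\mathscr{F}_\ell^\nu$, needed for both (ii) and (iii), I proceed uniformly. \textit{Density}: for $A \in \mathscr{C}_\ell^\nu$, Theorem \ref{reduction of objects} yields $\underline{c} \in C_\ell$ with $A(\ell,\underline{c}) \tilde{\to} A$ (transporting along an isomorphism from $\ell$ to a Kleinian subfield of $A$, as in the proof of Theorem \ref{reduction of classifications} (i)); comparing cardinalities of automorphism groups and invoking the equivalence behind Proposition \ref{Kleinian subgroup} ($c_2 = 0 \Leftrightarrow |{\rm Aut}| > 4$) places $\underline{c}$ in $C_\ell^\nu$. \textit{Faithfulness}: for fixed source $\underline{c}$ and target $\underline{d}$ it suffices that $(a,\sigma^i) \mapsto \varphi_a$ or $\psi_a$ be injective, which holds because $\varphi_a = \varphi_{a'}$ and $\psi_a = \psi_{a'}$ each force $a = a'$, while $\varphi_a$ and $\psi_{a'}$ never coincide --- their first components are $x$ and $\overline{x}$, which differ on ${\rm Im}(\ell) \setminus \{0\}$. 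For the \textit{quasi-fullness} of $Q_\ell^0$: if ${\rm Mor}(A(\ell,\underline{c}),A(\ell,\underline{d})) \ne \emptyset$ then $\ell^\ast(\underline{c},\underline{d}) \ne \emptyset$ by Theorem \ref{reduction of morphisms}, and any $a$ in this set yields the morphism $((a,\sigma^0),\underline{c},\underline{d})$ of $_{G_\ell^0} C_\ell^0$, since $a \in \ell^\ast$ and $\gamma_\ell^0((a,\sigma^0),\underline{c}) = \underline{d}$. This completes (ii).

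The part I expect to be the main obstacle is \textit{fullness} in (iii). The key is that $C_\ell^1 = \{\underline{c} \in C_\ell \mid c_2 \ne 0\}$ is contained in $C_\ell^\mathscr{N} \sqcup C_\ell^\mathscr{K}$, because $c_2 \ne 0$ rules out the skew-field case $(c_1,c_2) = (1,0)$ of Proposition \ref{types of admissible triples} (ii). Hence, whenever a morphism $A(\ell,\underline{c}) \to A(\ell,\underline{d})$ with $\underline{c},\underline{d} \in C_\ell^1$ exists, the two algebras lie in the same block $\mathscr{N}$ or $\mathscr{K}$, so $(\underline{c},\underline{d})$ falls under Corollary \ref{when all morphisms are constructed}, which identifies \emph{every} such morphism as some $\varphi_a$ or $\psi_a$ with $a \in \ell^\ast(\underline{c},\underline{d})$. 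The decisive step is to check that such $a$ lies in $\A^\ast$: from $c_2/a^2 = d_2$ with $c_2,d_2 \in k^\ast$ one gets $a^2 = c_2 d_2^{-1} \in k^\ast$, hence $a \in \A^\ast$. Therefore $(a,\sigma^0)$, respectively $(a,\sigma)$, lies in $G_\ell^1$ and is sent by $\mathscr{F}_\ell^1$ to $\varphi_a$, respectively $\psi_a$; this realizes every morphism of $\mathscr{C}_\ell^1$ as an image, proving $\mathscr{F}_\ell^1$ full and a fortiori quasi-full. Finally I would note that the same argument breaks for $\nu = 0$, since $C_\ell^0$ meets $C_\ell^\mathscr{S}$, where the central skew fields carry many automorphisms (by Skolem--Noether) that neither fix $1_A\ell$ nor have the form $\varphi_a,\psi_a$; this is exactly why $Q_\ell^0$ is asserted to be only a description and not a full one.
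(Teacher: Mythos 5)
Your proof is correct and follows essentially the same route as the paper's: the decomposition (i) by definition of the blocks, density via Theorem \ref{reduction of objects} (ii) together with the dichotomy of Proposition \ref{Kleinian subgroup}, quasi-fullness via Theorem \ref{reduction of morphisms}, and fullness of $\mathscr{F}_\ell^1$ via Proposition \ref{types of admissible triples} and Corollary \ref{when all morphisms are constructed}. Your explicit verifications that $a \in \A^\ast$ (from $a^2 = c_2 d_2^{-1} \in k^\ast$, needed so that $(a,\sigma^i)$ actually lies in $G_\ell^1$) and that the functors are faithful merely spell out details the paper leaves implicit.
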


\begin{proof}
{\it (i)} The claimed decomposition holds by definition of $\mathscr{C}_\ell^0$ and $\mathscr{C}_\ell^1$.
\\[1ex]
{\it (ii)-(iii)} Both functors $\mathscr{F}_\ell^0$ and $\mathscr{F}_\ell^1$ are dense by Theorem \ref{reduction of objects} (ii) and Proposition \ref{Kleinian subgroup}, faithful by definition, and quasi-full 
by Theorem \ref{reduction of morphisms}. Thus\linebreak $Q_\ell^0$ is a description of $\mathscr{C}_\ell^0$ by a group action, and $Q_\ell^1$ is a description \mbox{of $\mathscr{C}_\ell^1$} by a group action. 
In order to show that the functor $\mathscr{F}_\ell^1$ even is full, let $(\underline{c},\underline{d}) \in C_\ell^1 \times C_\ell^1$ and a morphism 
$\mu: A(\ell,\underline{c}) \to A(\ell,\underline{d})$ in $\mathscr{C}_\ell^1$\vspace{0,1cm} be given. Then $(\underline{c},\underline{d}) \in \left( C_\ell^\mathscr{N} \times C_\ell^\mathscr{N} \right) \sqcup 
\left( C_\ell^\mathscr{K} \times C_\ell^\mathscr{K} \right)$ by Proposition \ref{types of admissible triples},\vspace{0,1cm} whence Corollary \ref{when all morphisms are constructed} yields a morphism 
$((a,\sigma^i),\underline{c},\underline{d}): \underline{c} \to \underline{d}$ in $\ _{G_\ell^1} C_\ell^1$, such that $\mathscr{F}_\ell^1((a,\sigma^i),\underline{c},\underline{d}) = \mu$.
\end{proof}

\noindent
At last, the Kleinian covering of $\mathscr{C}(k)$ by group actions enters the stage. Again, we write $\mathscr{C} = \mathscr{C}(k),\ \mathscr{Q} = \mathscr{Q}(k)$ etc., for brevity.

\begin{cor} \label{Kleinian covering by group actions}
Let $k$ be a field of characteristic not 2, and let $\mathscr{L}$ be a transversal for $\mathscr{Q}/\hspace{-0,1cm}\simeq$. Then the family 
\[ Q = \left( Q_\ell^\nu \right)_{(\ell,\nu) \in \mathscr{L} \times \{0,1\}} = (C_\ell^\nu,G_\ell^\nu,\gamma_\ell^\nu,\mathscr{F}_\ell^\nu)_{(\ell,\nu) \in \mathscr{L} \times \{0,1\}} \]
of descriptions $Q_\ell^\nu$ of $ \mathscr{C}_\ell^\nu$ is a covering of $\mathscr{C}$ by group actions.
\end{cor}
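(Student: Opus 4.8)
The plan is to unwind the notion of a covering of a groupoid by group actions, introduced in Subsection 6.1, and verify its two constituent requirements for the family $Q$: that each quadruple $Q_\ell^\nu$ is a description of a full subgroupoid of $\mathscr{C}$, and that these subgroupoids cover $\mathscr{C}$ on the level of object classes. Both ingredients have essentially been prepared in the preceding results, so the corollary amounts to assembling them correctly.

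First I would record that each $\mathscr{C}_\ell^\nu$ is a full subgroupoid of $\mathscr{C}$. By construction $\mathscr{C}_\ell^\nu$ is a full subgroupoid of $\mathscr{C}_\ell = \mathscr{C}_\ell(k)$, and $\mathscr{C}_\ell$ is itself a full subgroupoid of $\mathscr{C}$ by its definition in Subsection 4.4; since a full subcategory of a full subcategory is again full, $\mathscr{C}_\ell^\nu$ is full in $\mathscr{C}$. Next, Proposition \ref{local description} (ii)--(iii) already asserts that $Q_\ell^0$ and $Q_\ell^1$ are descriptions by group actions ($Q_\ell^1$ even a full description) of $\mathscr{C}_\ell^0$ and $\mathscr{C}_\ell^1$ respectively. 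Thus the family $Q$ is precisely a family of descriptions of full subgroupoids of $\mathscr{C}$, which is the data from which a covering by group actions is built.

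It then remains to verify the covering identity of object classes $\mathscr{C} = \bigcup_{(\ell,\nu) \in \mathscr{L} \times \{0,1\}} \mathscr{C}_\ell^\nu$. Here I would simply chain two prior results: Proposition \ref{L-coverings} supplies the covering $\mathscr{C} = \bigcup_{\ell \in \mathscr{L}} \mathscr{C}_\ell$, and Proposition \ref{local description} (i) supplies, for each $\ell \in \mathscr{L}$, the decomposition $\mathscr{C}_\ell = \mathscr{C}_\ell^0 \amalg \mathscr{C}_\ell^1$, whose underlying identity of object classes reads $\mathscr{C}_\ell = \mathscr{C}_\ell^0 \cup \mathscr{C}_\ell^1$. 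Substituting the latter into the former yields $\mathscr{C} = \bigcup_{\ell \in \mathscr{L}} (\mathscr{C}_\ell^0 \cup \mathscr{C}_\ell^1) = \bigcup_{(\ell,\nu) \in \mathscr{L} \times \{0,1\}} \mathscr{C}_\ell^\nu$, which is the required identity.

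Since every ingredient is already in place, I do not expect any genuine obstacle; the only point demanding a moment's care is the bookkeeping that \emph{full in full is full}, ensuring the $\mathscr{C}_\ell^\nu$ qualify as full subgroupoids of the ambient groupoid $\mathscr{C}$ and not merely of $\mathscr{C}_\ell$, so that the definition given in Subsection 6.1 applies verbatim. With that observation made, the corollary follows immediately by concatenating Propositions \ref{L-coverings} and \ref{local description}.
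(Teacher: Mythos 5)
Your proposal is correct and follows essentially the same route as the paper: it cites Proposition \ref{local description} (ii)--(iii) for the descriptions $Q_\ell^\nu$ and concatenates Proposition \ref{L-coverings} with Proposition \ref{local description} (i) for the covering identity $\mathscr{C} = \bigcup_{(\ell,\nu) \in \mathscr{L} \times \{0,1\}} \mathscr{C}_\ell^\nu$, exactly as the paper's proof does. Your added remark that a full subcategory of a full subcategory is again full is a harmless bookkeeping point the paper leaves implicit.
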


\begin{proof}
For each $(\ell,\nu) \in \mathscr{L} \times \{0,1\}$, the quadruple $Q_\ell^\nu$ is a description of $\mathscr{C}_\ell^\nu$ by a group action (Proposition \ref{local description} (ii)-(iii)). 
Furthermore, the full subgroupoids $\mathscr{C}_\ell^\nu \subset \mathscr{C}$ cover $\mathscr{C}$, as the identities of object classes 
\[ \mathscr{C} = \bigcup_{\ell \in \mathscr{L}} \mathscr{C}_\ell = \bigcup_{\ell \in \mathscr{L}} \left(\mathscr{C}_\ell^0 \sqcup \mathscr{C}_\ell^1 \right) = 
\bigcup_{(\ell,\nu) \in \mathscr{L} \times \{0,1\}} \mathscr{C}_\ell^\nu \]
hold by Proposition \ref{L-coverings} and Proposition \ref{local description} (i)).
\end{proof}

\noindent
We call the family $Q$, displayed in Corollary \ref{Kleinian covering by group actions}, the {\it Kleinian covering of $\mathscr{C}$ by group actions}. It admits a noteworthy refinement, which evolves from 
the decomposition of $\mathscr{C}_\ell^0,\ \ell \in \mathscr{L}$, into the blocks\vspace{0,1cm} $\mathscr{C}_\ell^2 = \mathscr{N}_\ell \cap \mathscr{C}_\ell^0$ and $\mathscr{C}_\ell^3 = \mathscr{S}_\ell$, 
via the following descriptions $Q_\ell^\nu = (C_\ell^\nu,G_\ell^\nu,\gamma_\ell^\nu,\mathscr{F}_\ell^\nu)$ of\vspace{0,1cm} $\mathscr{C}_\ell^\nu,\ \nu \in \{2,3\}$. The sets $C_\ell^2 = C_\ell^\mathscr{N} \cap C_\ell^0$ 
and $C_\ell^3 = C_\ell^\mathscr{S}$ are subsets of $C_\ell^0$,\vspace{0,1cm} partitioning $C_\ell^0$ by Proposition \ref{types of admissible triples}. The groups $G_\ell^2$ and $G_\ell^3$,\vspace{0,1cm} acting 
on these sets, are \mbox{$G_\ell^2 = G_\ell^3 = G_\ell^0 = \ell^\ast >\hspace{-0,2cm}\lhd\ {\rm Gal}(\ell/k)$.} The group actions\vspace{0,1cm} $\gamma_\ell^\nu: G_\ell^\nu \times C_\ell^\nu \to C_\ell^\nu,$ 
\mbox{$\nu \in \{2,3\}$,} are both induced from $\gamma_\ell^0$, i.e.~they satisfy formula (5).\vspace{0,1cm} The func\-tors $\mathscr{F}_\ell^\nu: \ _{G_\ell^\nu} C_\ell^\nu\ \to\ \mathscr{C}_\ell^\nu,\ 
\nu \in \{2,3\}$, are both induced from $\mathscr{F}_\ell^0$,\vspace{0,1cm} i.e.~they satisfy $\mathscr{F}_\ell^\nu(\underline{c})\ =\ A(\ell, \underline{c})$ on objects, and formula (6) on morphisms.

\begin{pro} \label{refined local description}
For every quadratic extension $k \subset \ell$ in characteristic \mbox{not 2,} the following holds true. 
\\[1ex]
(i) $\mathscr{C}_\ell^0 = \mathscr{C}_\ell^2 \amalg \mathscr{C}_\ell^3$.
\\[1ex]
(ii) $Q_\ell^2 = (C_\ell^2,G_\ell^2,\gamma_\ell^2,\mathscr{F}_\ell^2)$ is a full description of $\mathscr{C}_\ell^2$ by a group action. 
\\[1ex]
(iii) $Q_\ell^3 = (C_\ell^3,G_\ell^3,\gamma_\ell^3,\mathscr{F}_\ell^3)$ is a description of $\mathscr{C}_\ell^3$ by a group action.
\end{pro}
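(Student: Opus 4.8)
The plan is to deduce all three statements by restriction from the facts about $Q_\ell^0$ recorded in Proposition \ref{local description}, exploiting that $C_\ell^2$ and $C_\ell^3$ are $G_\ell^0$-invariant subsets partitioning $C_\ell^0$, and that $\mathscr{F}_\ell^2$ and $\mathscr{F}_\ell^3$ are the corresponding restrictions of $\mathscr{F}_\ell^0$. For (i), I would first note via Proposition \ref{types of admissible triples} that $C_\ell^3 = C_\ell^\mathscr{S}$ consists of triples $(1,0,c_3)$ and hence lies in $C_\ell^0$, whereas all triples in $C_\ell^\mathscr{K}$ have $c_2 \not= 0$ and so are disjoint from $C_\ell^0$; thus the ambient partition of $C_\ell^0$ collapses to $C_\ell^0 = C_\ell^2 \sqcup C_\ell^3$. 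Since $\gamma_\ell^0$ fixes $c_1$ and sends $0$ to $0$ in the second coordinate, it preserves the condition $(c_1,c_2) = (1,0)$, so $C_\ell^3$ and its complement $C_\ell^2$ are $G_\ell^0$-invariant. On the object side, $\mathscr{C}_\ell^3 = \mathscr{S}_\ell \subset \mathscr{C}_\ell^0$ because its members arise as $A(\ell,(1,0,c_3))$ with $c_2 = 0$ and hence satisfy $|{\rm Aut}| > 4$ by Proposition \ref{Kleinian subgroup} (i), whereas $\mathscr{K}_\ell \cap \mathscr{C}_\ell^0 = \emptyset$, its members being fields with automorphism group of order exactly $4$. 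Intersecting the coproduct $\mathscr{C} = \mathscr{N} \amalg \mathscr{S} \amalg \mathscr{K}$ (Proposition \ref{decomposition}) with $\mathscr{C}_\ell^0$ then yields $\mathscr{C}_\ell^0 = \mathscr{C}_\ell^2 \amalg \mathscr{C}_\ell^3$, the vanishing of cross-morphisms being inherited from that coproduct.

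Next I would check that $Q_\ell^2$ and $Q_\ell^3$ are descriptions. Faithfulness and quasi-fullness come essentially for free: since $G_\ell^2 = G_\ell^3 = G_\ell^0$ and $C_\ell^2, C_\ell^3$ are invariant, the relevant morphism sets in ${}_{G_\ell^\nu} C_\ell^\nu$ coincide with those in ${}_{G_\ell^0} C_\ell^0$, and $\mathscr{F}_\ell^0$ already has both properties by Proposition \ref{local description} (ii). For density, any $A \in \mathscr{C}_\ell^2$ lies in $\mathscr{C}_\ell^0$ and so, by density of $\mathscr{F}_\ell^0$, is isomorphic to some $A(\ell,\underline{c})$ with $\underline{c} \in C_\ell^0$; the isomorphism forces $A(\ell,\underline{c}) \in \mathscr{N}$, whence $\underline{c} \in C_\ell^\mathscr{N} \cap C_\ell^0 = C_\ell^2$, and the parallel argument with $\mathscr{S}$ in place of $\mathscr{N}$ yields density of $\mathscr{F}_\ell^3$.

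Finally I would establish the fullness of $\mathscr{F}_\ell^2$, which is the single genuinely new point. For $(\underline{c},\underline{d}) \in C_\ell^2 \times C_\ell^2 \subset C_\ell^\mathscr{N} \times C_\ell^\mathscr{N}$, Corollary \ref{when all morphisms are constructed} guarantees that every morphism $A(\ell,\underline{c}) \to A(\ell,\underline{d})$ in $\mathscr{C}$ equals some $\varphi_a$ or $\psi_a$ with $a \in \ell^\ast(\underline{c},\underline{d})$; these are precisely the $\mathscr{F}_\ell^2$-images of the morphisms $((a,\sigma^i),\underline{c},\underline{d})$, so $\mathscr{F}_\ell^2$ is full. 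The reason $Q_\ell^3$ remains merely a description is that the central skew fields in $\mathscr{S}_\ell$ carry, by Skolem-Noether, isomorphisms that do not fix the Kleinian subfield $1_A\ell$ and therefore escape the family $\{\varphi_a,\psi_a\}$, blocking fullness. The only real obstacle is thus conceptual rather than computational: keeping track of which block of Proposition \ref{types of admissible triples} each triple inhabits, and recognizing that fullness turns entirely on the $\mathscr{N}$-versus-$\mathscr{S}$ dichotomy encoded in Corollary \ref{when all morphisms are constructed}.
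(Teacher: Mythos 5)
Your proof is correct and takes essentially the same route as the paper: part (i) by intersecting the decomposition $\mathscr{C}_\ell = \mathscr{N}_\ell \amalg \mathscr{S}_\ell \amalg \mathscr{K}_\ell$ with $\mathscr{C}_\ell^0$, using Proposition \ref{types of admissible triples} and Proposition \ref{Kleinian subgroup} to get $\mathscr{S}_\ell \subset \mathscr{C}_\ell^0$ and $\mathscr{K}_\ell \cap \mathscr{C}_\ell^0 = \emptyset$, and parts (ii)--(iii) by transporting the description arguments of Proposition \ref{local description}, with fullness of $\mathscr{F}_\ell^2$ deduced from $C_\ell^2 \subset C_\ell^\mathscr{N}$ via Corollary \ref{when all morphisms are constructed}. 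Your supplementary checks (the $G_\ell^0$-invariance of $C_\ell^2$ and $C_\ell^3$, the restriction argument for density, and the Skolem--Noether remark on why fullness fails for $Q_\ell^3$) are sound but go beyond what the paper's proof records.
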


\begin{proof}
{\it (i)} The decomposition $\mathscr{C}_\ell = \mathscr{N}_\ell \amalg \mathscr{S}_\ell \amalg \mathscr{K}_\ell$, valid by Proposition \ref{L-coverings}, induces the decomposition 
\[ \mathscr{C}_\ell^0 = \left( \mathscr{N}_\ell \cap \mathscr{C}_\ell^0 \right ) \amalg \left( \mathscr{S}_\ell \cap \mathscr{C}_\ell^0 \right) \amalg \left( \mathscr{K}_\ell \cap \mathscr{C}_\ell^0 \right) =  
\left( \mathscr{N}_\ell \cap \mathscr{C}_\ell^0 \right ) \amalg \mathscr{S}_\ell = \mathscr{C}_\ell^2 \amalg \mathscr{C}_\ell^3, \]
where the second identity stems from $\mathscr{S}_\ell \subset \mathscr{C}_\ell^0$ and $\mathscr{K}_\ell \cap \mathscr{C}_\ell^0 = \emptyset$, valid by Theorem \ref{reduction of objects} (ii), 
Proposition \ref{types of admissible triples} (ii)-(iii), and Proposition \ref{Kleinian subgroup}.
\\[1ex]
{\it (ii)-(iii)} Arguing as in the proof of Proposition \ref{local description} (ii)-(iii), one finds that $Q_\ell^2$ is a description of $\mathscr{C}_\ell^2$ by a group action, and $Q_\ell^3$ is a 
description $\mathscr{C}_\ell^3$ by a group action. Observing that $C_\ell^2 \subset C_\ell^\mathscr{N}$, even fullness of the functor $\mathscr{F}_\ell^2$ is proved by the same arguments which proved
fullness of $\mathscr{F}_\ell^1$, above.
\end{proof}

\begin{cor} \label{refined Kleinian covering by group actions}
Let $k$ be a field of characteristic not 2, and let $\mathscr{L}$ be a transversal for $\mathscr{Q}/\hspace{-0,1cm}\simeq$. Then the family 
\[ Q_{\rm ref} = \left( Q_\ell^\nu \right)_{(\ell,\nu) \in \mathscr{L} \times \underline{3}} = (C_\ell^\nu,G_\ell^\nu,\gamma_\ell^\nu,\mathscr{F}_\ell^\nu)_{(\ell,\nu) \in \mathscr{L} \times \underline{3}} \]
of descriptions $Q_\ell^\nu$ of $ \mathscr{C}_\ell^\nu$ is a covering of $\mathscr{C}$ by group actions.
\end{cor}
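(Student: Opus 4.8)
The plan is to mirror the proof of Corollary \ref{Kleinian covering by group actions} almost verbatim, simply replacing the index set $\{0,1\}$ by $\underline{3} = \{1,2,3\}$ and trading the single description $Q_\ell^0$ of $\mathscr{C}_\ell^0$ for the pair of descriptions $Q_\ell^2, Q_\ell^3$ supplied by the refined local analysis. By the definition of a covering of a groupoid by group actions (Subsection 6.1), I must verify two things: that each $Q_\ell^\nu$ is a description of $\mathscr{C}_\ell^\nu$ by a group action, and that the full subgroupoids $\mathscr{C}_\ell^\nu$ cover $\mathscr{C}$, i.e.~their object classes exhaust $\mathscr{C}$.

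For the first point I would merely cite the descriptions already established. The quadruple $Q_\ell^1$ is a (full) description of $\mathscr{C}_\ell^1$ by Proposition \ref{local description} (iii), while $Q_\ell^2$ and $Q_\ell^3$ are descriptions of $\mathscr{C}_\ell^2$ and $\mathscr{C}_\ell^3$ by Proposition \ref{refined local description} (ii)-(iii). Hence, for every $(\ell,\nu) \in \mathscr{L} \times \underline{3}$, the quadruple $Q_\ell^\nu$ is indeed a description of $\mathscr{C}_\ell^\nu$ by a group action, which is the only requirement of this kind.

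For the covering property I would chain together the decomposition identities of object classes already at hand. Proposition \ref{L-coverings} furnishes the covering $\mathscr{C} = \bigcup_{\ell \in \mathscr{L}} \mathscr{C}_\ell$; Proposition \ref{local description} (i) decomposes each $\mathscr{C}_\ell$ as $\mathscr{C}_\ell^0 \sqcup \mathscr{C}_\ell^1$; and Proposition \ref{refined local description} (i) refines $\mathscr{C}_\ell^0$ to $\mathscr{C}_\ell^2 \amalg \mathscr{C}_\ell^3$. Combining these yields the chain of identities of object classes
\[ \mathscr{C} = \bigcup_{\ell \in \mathscr{L}} \mathscr{C}_\ell = \bigcup_{\ell \in \mathscr{L}} \left( \mathscr{C}_\ell^1 \sqcup \mathscr{C}_\ell^2 \sqcup \mathscr{C}_\ell^3 \right) = \bigcup_{(\ell,\nu) \in \mathscr{L} \times \underline{3}} \mathscr{C}_\ell^\nu, \]
which exhibits the $\mathscr{C}_\ell^\nu$ as a covering of $\mathscr{C}$ and completes the verification.

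Since every ingredient has been prepared in Propositions \ref{L-coverings}, \ref{local description} and \ref{refined local description}, there is no genuine obstacle here; the work is purely a matter of reassembly. The only care required is bookkeeping: ensuring that $\nu$ now ranges over $\{1,2,3\}$ rather than $\{0,1\}$, that the block $\mathscr{C}_\ell^0$ has been correctly traded for $\mathscr{C}_\ell^2 \amalg \mathscr{C}_\ell^3$ while $\mathscr{C}_\ell^1$ is retained unchanged, and that no block is thereby double-counted or omitted when the union is reindexed over $\mathscr{L} \times \underline{3}$.
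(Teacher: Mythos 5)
Your proposal is correct and coincides with the paper's own proof: both verify the two requirements by citing Proposition \ref{local description} (iii) and Proposition \ref{refined local description} (ii)-(iii) for the individual descriptions $Q_\ell^\nu$, and both establish the covering via the same chain $\mathscr{C} = \bigcup_{\ell \in \mathscr{L}} \mathscr{C}_\ell = \bigcup_{\ell \in \mathscr{L}} \left( \mathscr{C}_\ell^1 \sqcup \mathscr{C}_\ell^2 \sqcup \mathscr{C}_\ell^3 \right)$ obtained from Propositions \ref{L-coverings}, \ref{local description} (i) and \ref{refined local description} (i). Your bookkeeping of the index change from $\{0,1\}$ to $\underline{3}$, trading $\mathscr{C}_\ell^0$ for $\mathscr{C}_\ell^2 \amalg \mathscr{C}_\ell^3$, is exactly what the paper does.
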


\begin{proof}
For each $(\ell,\nu) \in \mathscr{L} \times \underline{3}$, the quadruple $Q_\ell^\nu$ is a description of $\mathscr{C}_\ell^\nu$ by a group action (Proposition \ref{local description} (iii) and 
Proposition \ref{refined local description} (ii)-(iii)). Moreover, the full subgroupoids $\mathscr{C}_\ell^\nu \subset \mathscr{C}$ cover $\mathscr{C}$, as the identities of object classes 
\[ \mathscr{C} = \bigcup_{\ell \in \mathscr{L}} \mathscr{C}_\ell = \bigcup_{\ell \in \mathscr{L}} \left(\mathscr{C}_\ell^1 \sqcup \mathscr{C}_\ell^2  \sqcup \mathscr{C}_\ell^3 \right) = 
\bigcup_{(\ell,\nu) \in \mathscr{L} \times \underline{3}} \mathscr{C}_\ell^\nu \]
hold by Proposition \ref{L-coverings}, Proposition \ref{local description} (i) and Proposition \ref{refined local description} (i).
\end{proof}

\noindent
We call the family $Q_{\rm ref}$, appearing in Corollary \ref{refined Kleinian covering by group actions}, the {\it refined Kleinian covering of $\mathscr{C}$ by group actions}. It enables us to 
display the automorphism groups, up to isomorphism, of all objects in $\mathscr{C} = \mathscr{C}(k)$. For that purpose, we decompose $\mathscr{N} = \mathscr{N}(k)$ into two blocks 
$\mathscr{N}^0$ and $\mathscr{N}^1$, defined by their object classes $\mathscr{N}^0 = \{ A \in \mathscr{N}\ |\ |{\rm Aut}(A)| > 4 \}$ and $\mathscr{N}^1 = \{ A \in \mathscr{N}\ |\ |{\rm Aut}(A)| = 4 \}$.

\begin{cor} \label{automorphism groups}
Let $k$ be a field of characteristic not 2, and let $\mathscr{L}$ be a transversal for $\mathscr{Q}/\hspace{-0,1cm}\simeq$. Then the following holds true.
\\[1ex]
(i)\hspace{0,2cm} $\mathscr{C} = \mathscr{N}^0 \amalg \mathscr{N}^1 \amalg \mathscr{S} \amalg \mathscr{K}$.
\\[1ex]
(ii) If $A \in \mathscr{N}^0$, then $N_r(A)\ \tilde{\to}\ \ell$ for a unique $\ell \in \mathscr{L}$, and 
\[ {\rm Aut}(A)\ \tilde{\to}\ \S(\ell/k) >\hspace{-0,2cm}\lhd\ {\rm C}_2. \]
(iii) If $A \in \mathscr{N}^1$, then ${\rm Aut}(A)$ is Klein's four-group.
\\[1ex]
(iv) If $A \in \mathscr{S}$, then ${\rm Aut}(A)\ \tilde{\to}\ A^\ast/k^\ast$.
\\[1ex]
(v) If $A \in \mathscr{K}$, then ${\rm Aut}(A)$ is Klein's four-group.
\end{cor}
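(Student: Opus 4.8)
The plan is to read off each automorphism group as a point stabiliser of the relevant group action, exploiting the \emph{fullness} of the descriptions supplied by the refined Kleinian covering (Corollary \ref{refined Kleinian covering by group actions}), and to treat the central skew fields separately via Skolem--Noether, since their description is only faithful and not full. I begin with the decomposition (i): by Proposition \ref{decomposition} we already have $\mathscr{C} = \mathscr{N} \amalg \mathscr{S} \amalg \mathscr{K}$, and $\mathscr{N} = \mathscr{N}^0 \amalg \mathscr{N}^1$ holds by definition of the two blocks. Since isomorphic objects have isomorphic automorphism groups, both $\mathscr{N}^0$ and $\mathscr{N}^1$ are unions of isoclasses carrying no morphisms between them, so refining the first cofactor of the coproduct yields (i).

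For (ii) and (iii) I would use that, in a group action groupoid $_GX$, one has ${\rm Mor}_{_{_GX}}(\underline{c},\underline{c}) = {\rm Stab}_G(\underline{c})$, and that a \emph{full} description $\mathscr{F}:\ _GX \to \mathscr{Y}$, being an equivalence, induces a group isomorphism ${\rm Aut}_{\mathscr{Y}}(\mathscr{F}(\underline{c})) \cong {\rm Stab}_G(\underline{c})$. The stabilisers are computed directly from the group actions $\gamma_\ell^\nu$: the element $(a,\sigma^i)$ fixes $(c_1,c_2,c_3)$ exactly when $a^2 = 1$ whenever $c_2 \not= 0$, and $a\overline{a} = 1$ whenever $c_3 \not= 0$. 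For $A \in \mathscr{N}^0$, Theorem \ref{reduction of objects} (i) gives that $N_r(A)$ is the unique Kleinian subfield, so $N_r(A)\ \tilde{\to}\ \ell$ for a unique $\ell \in \mathscr{L}$ by Proposition \ref{L-coverings}; moreover $A\ \tilde{\to}\ A(\ell,\underline{c})$ with $\underline{c} = (c_1,0,c_3)$, $c_3 \not= 0$, so $A \in \mathscr{C}_\ell^2$. Here the stabiliser in $G_\ell^2 = \ell^\ast >\hspace{-0,2cm}\lhd\ {\rm Gal}(\ell/k)$ is $\{(a,\sigma^i)\ |\ a\overline{a}=1\} = \S >\hspace{-0,2cm}\lhd\ {\rm Gal}(\ell/k)$, and fullness of $\mathscr{F}_\ell^2$ (Proposition \ref{refined local description} (ii)) yields ${\rm Aut}(A)\ \tilde{\to}\ \S(\ell/k) >\hspace{-0,2cm}\lhd\ {\rm C}_2$, proving (ii). For $A \in \mathscr{N}^1$ one has $A\ \tilde{\to}\ A(\ell,\underline{c})$ with $c_2 \not= 0$, i.e.~$A \in \mathscr{C}_\ell^1$; the stabiliser in $G_\ell^1 = \A^\ast >\hspace{-0,2cm}\lhd\ {\rm Gal}(\ell/k)$ is $\{\pm1\} \times {\rm Gal}(\ell/k)$, which is Klein's four-group, and fullness of $\mathscr{F}_\ell^1$ (Proposition \ref{local description} (iii)) gives (iii).

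Statement (v) follows the same way --- every $A \in \mathscr{K}$ satisfies $\underline{c} = (0,c_2,0)$ with $c_2 \not= 0$ (Proposition \ref{types of admissible triples} (iii)), hence lies in $\mathscr{C}_\ell^1$ with the same Klein-four stabiliser --- or, more directly, from Proposition \ref{decomposition}, since $\mathscr{K}$ consists of Galois extensions of $k$ with Galois group Klein's four-group, so that ${\rm Aut}(A) = {\rm Gal}(A/k)$ is Klein's four-group. Statement (iv) is handled separately: for $A \in \mathscr{S}$, $A$ is a central skew field over $k$, so the Skolem--Noether isomorphism $\overline{\kappa}: A^\ast/k^\ast\ \tilde{\to}\ {\rm Aut}(A)$ recalled in Subsection 2.1 gives ${\rm Aut}(A)\ \tilde{\to}\ A^\ast/k^\ast$ at once.

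The main obstacle is conceptual rather than computational: one must recognise that the description $Q_\ell^3$ of $\mathscr{S}_\ell = \mathscr{C}_\ell^3$ is only faithful and \emph{not} full (Proposition \ref{refined local description} (iii)), so the stabiliser $\S >\hspace{-0,2cm}\lhd\ {\rm Gal}(\ell/k)$ of the corresponding triple injects into but does not exhaust ${\rm Aut}(A)$, the missing automorphisms being the inner ones invisible to the construction. This is precisely why (iv) cannot be read off from the group action and must instead be obtained from Skolem--Noether. The remaining care is bookkeeping: confirming via Propositions \ref{types of admissible triples} and \ref{Kleinian subgroup} that $\mathscr{N}^0$, $\mathscr{N}^1 \cup \mathscr{K}$ and $\mathscr{S}$ correspond respectively to $C_\ell^2$, $C_\ell^1$ and $C_\ell^3$, and that the first two descriptions are full, so that the stabiliser computation genuinely computes ${\rm Aut}(A)$.
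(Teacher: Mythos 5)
Your proposal is correct and takes essentially the same route as the paper: the decomposition in (i) from Proposition \ref{decomposition} plus the definition of $\mathscr{N}^0, \mathscr{N}^1$; for (ii) the identification $A \in \mathscr{C}_\ell^2$, fullness of $\mathscr{F}_\ell^2$, and the stabiliser computation giving $\S(\ell/k) >\hspace{-0,2cm}\lhd\ {\rm C}_2$; Skolem--Noether for (iv); and Galois theory for (v). The only minor divergence is (iii), where the paper dispenses with the stabiliser computation in $G_\ell^1$ altogether, simply noting that $|{\rm Aut}(A)| = 4$ by definition of $\mathscr{N}^1$ while ${\rm Aut}(A)$ contains Klein's four-group since $A \in \mathscr{C}$ --- your heavier argument via fullness of $\mathscr{F}_\ell^1$ is valid but unnecessary.
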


\begin{proof}
{\it (i)} The decomposition $\mathscr{N} = \mathscr{N}^0 \amalg \mathscr{N}^1$ holds by definition of $\mathscr{N}^0$ and $\mathscr{N}^1$. Together with Proposition \ref{decomposition}, it yields the claimed 
decomposition \mbox{of $\mathscr{C}$.} 
\\[1ex]
{\it (ii)} If $A \in \mathscr{N}^0$, then $N_r(A)$ is the unique Kleinian subfield of $A$, by Theorem \ref{reduction of objects} (i). In particular, $k \subset N_r(A)$ is a quadratic extension, 
so\vspace{0,1cm} $N_r(A)\ \tilde{\to}\ \ell$ holds for a unique $\ell \in \mathscr{L}$. Consequently,\vspace{0,1cm} 
$A \in \mathscr{N}_\ell \cap \mathscr{N}^0 = \mathscr{N}_\ell \cap \mathscr{C}_\ell^0 = \mathscr{C}_\ell^2$. The functor $\mathscr{F}_\ell^2: \ _{G_\ell^2}C_\ell^2\ \to\ \mathscr{C}_\ell^2$ is an equivalence of 
categories, by Proposition \ref{refined local description} (ii). Therefore 
$A\ \tilde{\to}\ A(\ell,\underline{c})$\vspace{0,1cm} for some $\underline{c} \in C_\ell^2 = C_\ell^\mathscr{N} \cap C_\ell^0$, and 
${\rm Aut}(A)\ \tilde{\to}\ {\rm Aut}(A(\ell,\underline{c}))\ \tilde{\to}\ {\rm Mor}_{_{G_\ell^2}C_\ell^2}(\underline{c},\underline{c})$. By definition of the group action groupoid $_{G_\ell^2}C_\ell^2$, and 
because $c_2 = 0$ and $c_3 \not= 0$, the latter\vspace{0,1cm} automorphism group admits the following sequence of canonical isomorphisms and identities:
\begin{eqnarray*} 
{\rm Mor}_{_{G_\ell^2}C_\ell^2}(\underline{c},\underline{c}) & \tilde{\to} & \{ (a,\sigma^i) \in \ell^\ast >\hspace{-0,2cm}\lhd\ {\rm Gal}(\ell/k)\ |\ \gamma_\ell^2((a,\sigma^i),\underline{c}) = \underline{c} \} 
\\
 & = & \{ (a,\sigma^i) \in \ell^\ast >\hspace{-0,2cm}\lhd\ {\rm Gal}(\ell/k)\ |\ a \in \ell^\ast(\underline{c},\underline{c}) \} 
\\[1ex]
 & = & \{ (a,\sigma^i) \in \ell^\ast >\hspace{-0,2cm}\lhd\ {\rm Gal}(\ell/k)\ |\ a \in \S(\ell/k) \} 
\\[1ex]
 & = & \S(\ell/k) >\hspace{-0,2cm}\lhd\ {\rm Gal}(\ell/k)
\\[1ex]
 & \tilde{\to} & \S(\ell/k) >\hspace{-0,2cm}\lhd\ {\rm C}_2.
\end{eqnarray*}
{\it (iii)} If $A \in \mathscr{N}^1$, then ${\rm Aut}(A)$ contains Klein's four-group as a subgroup and $|{\rm Aut}(A)| = 4$, whence the statement  follows.
\\[1ex]
{\it (iv)} If $A \in \mathscr{S}$, then $A$ is a central skew field over $k$. Therefore Skolem-Noether Theorem \cite[Theorem 7.21]{Re75} applies to $A$, whence the claimed isomorphism follows.
\\[1ex]
{\it (v)} If $A \in \mathscr{K}$, then $A$ is a Galois extension of $k$ whose Galois group is Klein's four-group (Proposition \ref{decomposition}), and ${\rm Aut}(A) = {\rm Gal}(A/k)$.
\end{proof}

\subsection{Epilogue}

In retrospect, we see that the ``{\it construction and reduction}'' approach, presented in Sections 3 and 4, provides a sufficiently solid basis for a systematic treatment of the {\it classification problem} 
of $\mathscr{C}$. The information it conveys about morphisms in $\mathscr{C}$ is however minimal: it just gives a {\it criterion for the existence} of an isomorphism 
$A(\ell,\underline{c})\ \tilde{\to}\ A(\ell,\underline{d})$ in terms of $\underline{c}$ and $\underline{d}$ (Theorem \ref{reduction of morphisms}). As soon as one changes the perspective and tries to 
{\it understand the structure} of the groupoid $\mathscr{C}$, more information about morphisms is required. The refined Kleinian covering of $\mathscr{C}$ meets this need. It supplements the construction 
and reduction approach by ``{\it local descriptions}'' of $\mathscr{C}$, i.e.~descriptions of the full subgroupoids $\mathscr{C}_\ell^\nu$ by group actions, such that the object class $\mathscr{C}$ is 
covered by the object classes $\mathscr{C}_\ell^\nu$, where $(\ell,\nu) \in \mathscr{L} \times \underline{3}$. The proof of \mbox{Corollary \ref{automorphism groups} (ii)} illustrates the usefulness of 
this deepened insight. 

The Kleinian coverings of $\mathscr{C}$ also exemplify a {\it phenomenon} which appears to be ubiquitous although far from evident, namely {\it that a given groupoid $\mathscr{Y}$ admits an explicitly 
and constructively presented covering by group actions at all.} Apart from the groupoid $\mathscr{C}$, the occurrence of this phenomenon has in fact been observed in various other contexts. As a sample, 
let us mention the groupoid of all 2-dimensional real division algebras \cite[Theorem 3.3]{Di05}, the groupoid of all 4-dimensional absolute valued algebras \cite[Proposition 5.3]{Di12}, and the groupoid of 
all 8-dimensional composition algebras over a field of characteristic not 2 \cite[Corollary 4.6]{Al17}.

A general theory however, that would be able to outline and explain the scope of validity of this phenomenon, seems not to exist at present. Yet whenever it does occur for a given 
groupoid $\mathscr{Y}$, it provides a holistic picture of $\mathscr{Y}$ in the sense that the classification problem of $\mathscr{Y}$ locally can be viewed as the normal form problem of a group 
action, and, whenever a local description of $\mathscr{Y}$ is full, then the described full subgroupoid is equivalent to an explicitly presented group action groupoid.

The author of the present article enjoyed the privilege to visit the Mathe\-matics Department of Z\"{u}rich University during the years 1987-1992 as a member of the Algebra group, which then was led by
Peter Gabriel. In numerous private discussions, fine and memorable ones, Gabriel repeatedly stressed his viewpoint of the primacy of the concept of a group action. He even based his last lectures in 
Linear Algebra, documented in \cite{Ga96}, on that concept. The present article is a late outgrowth of the influental ideas that shaped the author during his stay at Z\"{u}rich University. It confirms the
conjecture that classification problems, in great generality, can be understood as normal form problems of group actions.

\vspace{1cm}
\noindent 
\begin{tabular}{@{}rl}
Corresponding author: & Ernst Dieterich
\\[1ex]
Address: & Department of Mathematics\\
 & Uppsala University\\
 & Box 480\\
 & SE-751 06 Uppsala, Sweden
\\[1ex]
e-mail: & Ernst.Dieterich@math.uu.se
\\[1ex]
Tel: & +46-18-4717622
\\[1ex]
Fax: & +46-18-4713201
\end{tabular}

\begin{thebibliography}{99} 

\bibitem{Al17} S.~Alsaody, {\it Composition algebras and outer automorphisms of algebraic groups via triality}, Comm.~Algebra 45 (2017), 2401--2416.

\bibitem{BA15} M.~Bani-Ata, S.~Aldhafeeri, F.~Belgacem, M.~Laila, {\it On four-dimen\-sional unital division algebras over finite fields}, Algebr.~Represent.~Theory 18 (2015), 215--220.

\bibitem{CR62} C.W.~Curtis, I.~Reiner, {\it Representation theory of finite groups and associative algebras}, John Wiley \& Sons 1962.

\bibitem{Di05} E.~Dieterich, {\it Classification, automorphism groups and categorical structure of the two-dimensional real division algebras}, J.~Algebra Appl.~4 (2005), 517--538.

\bibitem{Di12} E.~Dieterich, {\it A general approach to finite-dimensional division algebras}, Colloq.~Math.~126 (2012), 73--86.

\bibitem{Ga96} P.~Gabriel, {\it Matrizen, Geometrie, Lineare Algebra}, Birkh\"{a}user Advan\-ced Texts, Birkh\"{a}user Verlag 1996.

\bibitem{GR92} P.~Gabriel, A.V.~Roiter, {\it Representations of finite-dimensional algebras}, Algebra VIII, Encyclopaedia Math.~Sci.~73, Springer 1992.

\bibitem{Gr07} P.A.~Grillet, {\it Abstract algebra}, Second Edition, Graduate Texts in Ma\-thematics 242, Springer 2007.

\bibitem{Ha17} G.~Hammarhjelm, {\it On the classification of rational four-dimensional unital division algebras}, arXiv: 1802.08507.

\bibitem{Re75} I.~Reiner, {\it Maximal orders}, L.M.S.~Monographs, Academic Press London 1975.

\bibitem{SV00} T.A.~Springer, F.D.~Veldkamp, {\it Octonions, Jordan algebras and exceptional groups}, Springer 2000.
\end{thebibliography}
\end{document}